\documentclass[11pt,a4paper]{article}
\usepackage[T1]{fontenc}
\usepackage{lmodern}
\usepackage[margin=25.4mm]{geometry}
\usepackage{amsmath,amssymb,amsthm,mathtools}
\usepackage{microtype,booktabs,array}
\usepackage[numbers,square]{natbib}
\usepackage{xcolor}
\usepackage[colorlinks=true,linkcolor=blue,citecolor=blue,urlcolor=blue]{hyperref}
\hypersetup{pdftitle={On the Brouwer-type Conjecture for Signless Laplacian Eigenvalues of Graphs},pdfauthor={Ruisong Yuan; Xiao-Dong Zhang},pdfsubject={Analytic proof, equality cases, and asymptotic sharpness}}
\allowdisplaybreaks[2]
\numberwithin{equation}{section}
\newtheorem{theorem}{Theorem}[section]
\newtheorem{conjecture}[theorem]{Conjecture}
\newtheorem{lemma}[theorem]{Lemma}
\newtheorem{proposition}[theorem]{Proposition}
\newtheorem{corollary}[theorem]{Corollary}
\theoremstyle{definition}

\theoremstyle{remark}
\newtheorem{remark}[theorem]{Remark}
\newtheorem*{remark*}{Remark}
\DeclareMathOperator{\tr}{tr}
\DeclareMathOperator{\rank}{rank}
\DeclareMathOperator{\diag}{diag}
\DeclareMathOperator{\Ker}{ker}
\newcommand{\R}{\mathbb R}
\newcommand{\one}{\mathbf 1}
\newcommand{\pp}{\mathbf p}
\newcommand{\EE}{\mathcal E}
\newcommand{\TT}{\mathcal T}
\newcommand{\MM}{\mathcal M}
\newcommand{\BB}{\mathcal B}
\newcommand{\CC}{\mathcal C}
\newcommand{\pos}[1]{\left(#1\right)_{+}}
\title{On the Brouwer-type Conjecture for Signless Laplacian Eigenvalues of Graphs}
\author{
Ruisong Yuan\textsuperscript{1,2}\thanks{Email: \href{mailto:doubendouyrs@sjtu.edu.cn}{doubendouyrs@sjtu.edu.cn}},\quad
Xiao-Dong Zhang\textsuperscript{1}\thanks{Corresponding author. Email: \href{mailto:xiaodong@sjtu.edu.cn}{xiaodong@sjtu.edu.cn}}
\\[0.4em]
{\small\textsuperscript{1}School of Mathematical Sciences, Shanghai Jiao Tong University, Shanghai, China}\\[0.4em]
{\small\textsuperscript{2}Shanghai Innovation Institute, Shanghai, China}
}
\date{}

\begin{document}
\maketitle
\begin{abstract}
Motivated by Brouwer's conjecture, Ashraf, Omidi and Tayfeh-Rezaie proposed the following Brouwer-type conjecture that  for every graph $G$ on
$n$ vertices with $m$ edges, the sum $S_k^+(G)$ of its $k$ largest signless Laplacian
eigenvalues satisfies $S_k^+(G)\le m+\binom{k+1}{2}$ for $k=1, \ldots,  n$. In this paper, we prove that the above conjecture holds. Moreover, the equality holds
if and only if $k=1$ and $G$ is either  star $K_{1,a}$ or triangle $K_3$ with adding some isolated vertices.
 For split graphs, properties of block signless Laplacian matrices based on clique and independent set are adapted. While for non-split graphs, some spectral graph substructure are used to control the sum of signless Laplacian eigenvalues.
\end{abstract}

\medskip\noindent\textbf{Keywords.} Signless Laplacian matrix; sum of eigenvalues;
Brouwer-type conjecture; split graphs;  projection.

\section{Introduction}\label{sec:intro}
All graphs in this paper are finite, simple, and undirected.
Let $G$ be a graph with vertex set $V(G)$ and edge set $E(G)$, and put
$n=|V(G)|$ and $m=|E(G)|$. Let $A(G)$ be its adjacency matrix and
$D(G)$ the diagonal matrix of vertex degrees. We denote the signless
Laplacian matrix of $G$ by $Q(G)=D(G)+A(G)$ and write its eigenvalues
in non-increasing order as $q_1(G)\ge\cdots\ge q_n(G)\ge0$. For
$1\le k\le n$, set
\[
 S_k^+(G)=\sum_{i=1}^k q_i(G).
\]

Brouwer's Laplacian conjecture concerns the corresponding sum of
eigenvalues of $L(G)=D(G)-A(G)$ and proposes the upper bound
$m+\binom{k+1}{2}$. \citet*{AOT} proposed the following conjecture.

\begin{conjecture}[{\citet{AOT}}]\label{conj:signless}
Let $G$ be a graph with $n$ vertices and $m$ edges. Then, for every
integer $1\le k\le n$,
\begin{equation}\label{eq:conjecture}
 S_k^+(G)\le m+\binom{k+1}{2}.
\end{equation}
\end{conjecture}

It is known that Conjecture~\ref{conj:signless} holds for several graph
classes and for $k=1,2,n-2,n-1,n$. These classes include regular graphs
(\citet{AOT}), unicyclic and bicyclic graphs (\citet{YY}), and threshold
graphs (\citet{HPTT}). The cases $k=1,n-1,n$ follow from standard
eigenvalue bounds. \citet*{ZHS}, with the corrections in \citet{DZSH},
proved the case $k=2$. The reduction of \citet{CHJL} then gives the
case $k=n-2$.

Related extremal problems for the gap $m+3-S_2^+(G)$ were studied by
\citet{OLRC}. The minimizing graphs were determined for fixed order
$n\ge9$ by \citet{ZHS}, with the corrigendum of \citet{DZSH}, and for
a fixed number of edges $m\ge4$ by \citet*{ZDH}. \citet{ADKMO}
obtained sharp upper and lower bounds for $S_k^+(G)$ using eigenvalue
interlacing.

In this paper, the main results can be stated as follows:

\begin{theorem}\label{thm:main}
Let $G$ be a simple graph on $n\ge1$ vertices and $m$ edges. Then
\[
 S_k^+(G)\le m+\binom{k+1}{2}, \qquad k=1,\ldots,n.
\]
Moreover, equality holds if and only if $k=1$ and
\[
 G\cong K_{1,a}\sqcup tK_1\quad(a\ge1,\ t\ge0),
 \qquad\text{or}\qquad
 G\cong K_3\sqcup tK_1\quad(t\ge0).
\]
\end{theorem}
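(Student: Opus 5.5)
We will use the Ky Fan variational characterization throughout: $S_k^+(G)=\max\{\tr(P Q(G)) : P \text{ an orthogonal projection of rank } k\}$. Writing $Q=\sum_{uv\in E}(e_u+e_v)(e_u+e_v)^{T}$, we get
\[
 \tr(PQ)=\sum_{uv\in E}\|P(e_u+e_v)\|^2 .
\]
Each term is at most $2$, and each term is also at most $p_u+p_v+2|P_{uv}|$, where $p_u=P_{uu}\in[0,1]$ and $\sum_u p_u=k$. The goal is to show $\sum_{uv\in E}\bigl(\|P(e_u+e_v)\|^2-1\bigr)\le\binom{k+1}{2}$. So we must bound the total excess over $1$ contributed by the edges, using only that $P$ is a rank-$k$ projection. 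The cases $k\ge n-2$ and $k\le 2$ are already known. We may also discard isolated vertices and assume $G$ is connected, since $S_k^+$ of a disjoint union is dominated by splitting $k=k_1+k_2$ and $\binom{k_1+1}{2}+\binom{k_2+1}{2}\le\binom{k+1}{2}$.

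\textbf{Split graphs.} Partition $V=C\cup I$ with $C$ a clique of size $c$ and $I$ independent. Then $Q$ has a $2\times2$ block form. The clique part is $Q(K_c)+D_{C,I}$, where $D_{C,I}$ is the diagonal matrix of degrees into $I$. The off-diagonal block is the biadjacency matrix $B$, and the $I$-block is diagonal. Edges inside $C$ contribute at most $\tr(P\,Q(K_c))$. Since $Q(K_c)=(c-2)I+J$, this is at most $(c-2)\min(k,c)+\min(c,k)$-type quantities. For the edges between $C$ and $I$, we use $\|P(e_u+e_v)\|^2\le 1+2\langle Pe_u,Pe_v\rangle+(p_u+p_v-1)$ and bound the bipartite part by $\binom{k+1}{2}$-style counting on the rank-$k$ range. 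Concretely, we would prove $\tr(PQ)\le |E(C,I)|+\binom{c}{2}+\binom{k+1}{2}$ by optimizing over $\tr(P_C)=s$ and $\tr(P_I)=k-s$. In effect this reproduces the threshold-graph argument of \citet{HPTT}, but for an arbitrary bipartite part via the majorization $\tr(P_C D_{C,I})\le\sum(\text{top } s \text{ entries})$.

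\textbf{Non-split graphs.} Here we would use the classical characterization: a graph is split iff it contains no induced $2K_2$, $C_4$ or $C_5$. Our plan is induction on $m$. Deleting an edge $e=uv$ decreases $S_k^+$ by at most $\|P^*(e_u+e_v)\|^2\le 2$, where $P^*$ is optimal for $G$. So it suffices to find an edge whose contribution under $P^*$ is at most $1$, or, failing that, a small set of $r$ edges with total contribution at most $r$. A non-split graph has such a spread-out induced substructure ($2K_2$, $C_4$ or $C_5$), and summing over its edges should give total contribution at most its number of edges. For example, for an induced $C_4$ with vertex vector sums we have $(e_1+e_2)-(e_2+e_3)+(e_3+e_4)-(e_4+e_1)=0$, and parallelogram-type identities for projections let us trade terms. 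This is the step I expect to be the main obstacle: showing that some edge of a forbidden subgraph has $\|P^*(e_u+e_v)\|^2\le1$. If a direct argument fails, the fallback is to cover all edges by such substructures and average.

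\textbf{Equality.} Every inequality in the chain must then be tight. This forces $k=1$, since for $k\ge2$ the split bound is strict and the induction gives strictness. For $k=1$, the inequality $q_1\le m+1$ is tight only for stars and triangles, by the known bound $q_1\le \max_{uv}(d_u+d_v)\le m+1$, together with the isolated vertices allowed by the reduction above.
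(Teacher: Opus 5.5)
Your non-split step fails as stated. The fallback to ``a set of $r$ edges with total contribution at most $r$'' adds nothing. If every edge has $\|P^*(e_u+e_v)\|^2>1$, then every nonempty edge set exceeds its size, so you need a single edge with contribution at most $1$ under the optimal projection. That is false. Take $G=C_6$ and $k=3$. This $G$ is connected and non-split (edges $12$ and $45$ induce $2K_2$), and $k=3$ lies outside the known range $\{1,2,n-2,n-1,n\}$. Since $Q(C_6)=2I+A(C_6)$ has spectrum $4,3,3,1,1,0$, the optimal rank-$3$ projection is unique: $P^*_{xy}=\frac16\bigl(1+2\cos(\pi(x-y)/3)\bigr)$. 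Thus $p_u=\frac12$ and $P^*_{u,u+1}=\frac13$, so \emph{every} edge contributes $\frac53>1$. No choice of edges, and no averaging over covering substructures, can help. The root cause is that the bound grows by $1$ per edge while an edge can contribute up to $2$. An induction on $m$ that carries only the inequality forward, without recording how the budget $\binom{k+1}{2}$ is spent, therefore has no room. The paper uses no induction. Lemma~\ref{lem:identity} rewrites the excess exactly as $\EE_G(P)=\frac12-\frac{(\tau-1)^2}{2}-r_0-R-\sum_i v(p_i)-\TT_G(P)$. Every subtracted term is nonnegative and attached to a single vertex, edge or nonedge, so the global budget becomes a universal slack of $\frac12$. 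Theorem~\ref{thm:local} then shows that the vertex and pair losses on an induced $2K_2$, $C_4$ or $C_5$ alone exceed $\frac12$, using two-variable bounds that eliminate $c_{ij}$.

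Your split-graph paragraph also contains no actual argument. The inequality you say you would prove, $\tr(PQ)\le|E(C,I)|+\binom c2+\binom{k+1}2$, is exactly the conjecture for split graphs, since $m=|E(C,I)|+\binom c2$. The displayed bound $\|P(e_u+e_v)\|^2\le1+2\langle Pe_u,Pe_v\rangle+(p_u+p_v-1)$ is an identity. Majorization handles the diagonal degree term, but the real difficulty is the cross term $2\tr(B^{\mathsf T}Y)$ coupling the clique and the independent set, and there is no slack to lose. Bounding the clique and bipartite parts separately overshoots by $\binom c2$ at $k=c$, even with the exact value $S_c^+(K_c)=c(c-1)$ and Brouwer's bound for the bipartite part. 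Meanwhile the true gap for $K_k\vee\overline{K_b}$ is only about $2k(k-1)/b$ (Proposition~\ref{prop:asymptotic-sharpness}). The paper controls the coupling by completing two squares with the same matrix $M=A-\mu I_c+2I_c$, using $H^2+YY^{\mathsf T}=H$. It then bounds $\kappa$ and the rank loss $L_H\ge(c-k)_+\phi(\theta)$. Consequently your equality argument lacks its inputs. Strictness for $k\ge2$ in the split case is unproved, and an edge with contribution at most $1$ would give only a non-strict inequality, never strictness. Your $k=1$ route via $q_1\le\max_{uv\in E}(d_u+d_v)\le m+1$ is workable, provided you add the standard equality case of that bound, which you do not state.
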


The rest of the paper is organized as follows. Section~\ref{sec:identity}
introduces the preliminaries. Sections~\ref{sec:split} and~\ref{sec:obstructions}
prove Conjecture~\ref{conj:signless} for split and non-split graphs,
respectively. Section~\ref{sec:conclusion} contains conclusions and remarks.

\section{Preliminaries}\label{sec:identity}

We write $I_r$ and $J_r$ for the $r\times r$ identity and all-ones
matrices, respectively, and $\one$ for the all-ones column vector.
For $t\in\R$, put $t_+=\max\{t,0\}$. We denote the Frobenius norm
by $\|\cdot\|_F$. All pairwise sums are over unordered pairs of
distinct vertices.

\subsection{The variational reduction}

We first recall the maximum principle of \citet{Fan}.
The projection reduction is also used by \citet[Lemma~5.1]{KT}.

\begin{lemma}\label{lem:variational}
If $Q$ is a real symmetric $n\times n$ matrix with eigenvalues
$q_1\ge\cdots\ge q_n$, then, for every integer $1\le k\le n$,
\[
 \sum_{i=1}^kq_i
 =\max\{\tr(PQ):P=P^{\mathsf T}=P^2,\ \tr P=k\}.
\]
\end{lemma}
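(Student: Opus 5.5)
We prove the Ky Fan maximum principle by diagonalizing $Q$ and reducing the maximization over projections to a linear problem on the diagonal of $P$.

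\textbf{Setup.} Write $Q=U\Lambda U^{\mathsf T}$ with $U$ orthogonal and $\Lambda=\diag(q_1,\dots,q_n)$. Conjugation $P\mapsto U^{\mathsf T}PU$ preserves the feasible set $\{P=P^{\mathsf T}=P^2,\ \tr P=k\}$. It also preserves the objective, since $\tr(PQ)=\tr\big((U^{\mathsf T}PU)\Lambda\big)$. Hence it suffices to treat $Q=\Lambda$ diagonal.

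\textbf{Attainment.} Take $P=\diag(1,\dots,1,0,\dots,0)$ with $k$ ones, i.e. the orthogonal projection onto the span of the first $k$ eigenvectors. It is feasible and gives $\tr(P\Lambda)=\sum_{i=1}^k q_i$, so the maximum is at least this sum.

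\textbf{Upper bound.} Let $P$ be any feasible projection and set $d_i=P_{ii}=e_i^{\mathsf T}Pe_i$. Then $\tr(P\Lambda)=\sum_i q_i d_i$, so only the diagonal of $P$ matters. Because $P$ is an orthogonal projection,
\[
d_i=\|Pe_i\|^2\in[0,1], \qquad \sum_i d_i=\tr P=k.
\]
It remains to maximize the linear functional $\sum_i q_i d_i$ over the polytope $\{0\le d_i\le1,\ \sum d_i=k\}$.

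Since $q_1\ge\cdots\ge q_n$, we compare with the extreme point $d=(1,\dots,1,0,\dots,0)$ directly:
\[
\sum_{i=1}^k q_i-\sum_i q_i d_i=\sum_{i\le k}(1-d_i)q_i-\sum_{i>k}d_iq_i .
\]
Each $q_i$ with $i\le k$ is at least $q_k$, and each $q_i$ with $i>k$ is at most $q_k$. Therefore
\[
\sum_{i=1}^k q_i-\sum_i q_i d_i\ \ge\ q_k\Big(\sum_{i\le k}(1-d_i)-\sum_{i>k}d_i\Big).
\]
The bracket equals $k-\sum_i d_i=0$, so $\sum_i q_i d_i\le\sum_{i=1}^k q_i$. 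Combined with attainment, this proves the lemma.

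The only step needing care is the bound $0\le d_i\le1$. It follows from $P=P^{\mathsf T}P$, which gives $d_i=\|Pe_i\|^2\ge0$, together with $\|Pe_i\|\le\|e_i\|=1$, since orthogonal projections are contractions.
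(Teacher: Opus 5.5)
Your proposal is correct and is essentially the paper's own proof. Conjugating to diagonal form and taking $d_i=P_{ii}$ is the same as the paper's use of $\alpha_i=u_i^{\mathsf T}Pu_i$ in an eigenbasis. The $q_k$ comparison and the attainment by the projection onto the top $k$ eigenvectors are identical to the paper's; you also justify $0\le d_i\le1$ explicitly.
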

\begin{proof}
Choose an orthonormal eigenbasis $u_1,\ldots,u_n$ for $Q$. For any
projection in the indicated set, put $\alpha_i=u_i^{\mathsf T}Pu_i$.
Then $0\le\alpha_i\le1$ and $\sum_i\alpha_i=k$. Moreover,
$\tr(PQ)=\sum_iq_i\alpha_i$. Since
$\sum_{i\le k}(1-\alpha_i)=\sum_{i>k}\alpha_i$, we have
\begin{align*}
 \sum_{i=1}^kq_i-\sum_{i=1}^nq_i\alpha_i
 &=\sum_{i\le k}q_i(1-\alpha_i)-\sum_{i>k}q_i\alpha_i\\
 &\ge q_k\left(\sum_{i\le k}(1-\alpha_i)-\sum_{i>k}\alpha_i\right)=0.
\end{align*}
Equality holds for the projection onto
$\operatorname{span}\{u_1,\ldots,u_k\}$.
\end{proof}

Label the vertices of $G$ by $1,\ldots,n$, and fix a rank-$k$
projection $P$ for the rest of this section. Set
\begin{equation}\label{eq:entries}
 p_i=P_{ii},\quad a_i=1-p_i,\quad c_{ij}=P_{ij}\ (i\ne j),\quad
 \pp=(p_1,\ldots,p_n)^{\mathsf T}.
\end{equation}
The identities $P^2=P$ and $\tr P=k$ give
\begin{equation}\label{eq:row}
 0\le p_i\le1,\qquad \sum_i p_i=k,\qquad
 \sum_{j\ne i}c_{ij}^2=p_i-p_i^2=p_i a_i.
\end{equation}
In particular, $p_i\in\{0,1\}$ implies $c_{ij}=0$ for all $j\ne i$.
Also $P$ is positive semidefinite, so its $2\times2$ principal minors give
\begin{equation}\label{eq:cauchy}
 |c_{ij}|\le\sqrt{p_ip_j}.
\end{equation}

Each edge contributes the rank-one matrix
$(e_i+e_j)(e_i+e_j)^{\mathsf T}$ to $Q(G)$. Consequently,
\[
 \tr(PQ(G))=\sum_{ij\in E(G)}(p_i+p_j+2c_{ij}).
\]
Define
\begin{equation}\label{eq:f}
 f_{ij}=p_i+p_j+2c_{ij}-1,\qquad
 \EE_G(P)=\tr(PQ(G))-m-\binom{k+1}{2}.
\end{equation}
Then
\[
 \EE_G(P)=\sum_{ij\in E(G)}f_{ij}-\binom{k+1}{2}.
\]
By Lemma~\ref{lem:variational}, it suffices to prove $\EE_G(P)\le0$
for every such $P$.

\subsection{An identity for the excess}
In this section, we derive an identity expressing the excess $\EE_G(P)$
in~\eqref{eq:f} as $1/2$ minus a sum of nonnegative terms.
In Section~\ref{sec:obstructions}, we show that this sum exceeds
$1/2$ for non-split graphs, which gives $\EE_G(P)<0$.
We begin by defining
\begin{equation}\label{eq:pairloss}
 \MM_{ij}=
 \begin{cases}
 (a_i^{-1}+a_j^{-1}-1)(c_{ij}-a_i a_j)^2,&a_i a_j>0,\\
 0,&a_i a_j=0,
 \end{cases}
 \qquad
 \BB_{ij}=\MM_{ij}+f_{ij}.
\end{equation}

\begin{lemma}\label{lem:pair}
Both $\MM_{ij}$ and $\BB_{ij}$ are nonnegative. If $a_i a_j>0$, then
\begin{align}
 \BB_{ij}
 &=\frac{1-p_ip_j}{a_i a_j}c_{ij}^2
       +2p_ip_jc_{ij}
       +p_ip_j(p_i+p_j-p_ip_j),\label{eq:Bexpanded}\\
 \BB_{ij}
 &=\frac{1-p_ip_j}{a_i a_j}
   \left(c_{ij}+\frac{p_ip_j a_i a_j}{1-p_ip_j}\right)^2
   +\frac{p_ip_j(p_i+p_j-2p_ip_j)}{1-p_ip_j}.
   \label{eq:Bcompleted}
\end{align}
If $p_i=1$, then $\MM_{ij}=0$ and $\BB_{ij}=p_j$.
\end{lemma}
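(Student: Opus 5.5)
The plan is a direct algebraic verification. The one structural fact to isolate first is the identity
\[
 a_i^{-1}+a_j^{-1}-1=\frac{a_i+a_j-a_ia_j}{a_ia_j}=\frac{1-(1-a_i)(1-a_j)}{a_ia_j}=\frac{1-p_ip_j}{a_ia_j},
\]
valid whenever $a_ia_j>0$. This rewrites $\MM_{ij}=\frac{1-p_ip_j}{a_ia_j}(c_{ij}-a_ia_j)^2$. Since $a_ia_j>0$ forces $p_i,p_j<1$, we have $1-p_ip_j>0$, and so $\MM_{ij}\ge0$ in this case. When $a_ia_j=0$ we have $\MM_{ij}=0$ by definition.

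To obtain \eqref{eq:Bexpanded}, I will add $f_{ij}=p_i+p_j+2c_{ij}-1$ to the rewritten $\MM_{ij}$ and collect powers of $c_{ij}$, writing $s=p_i+p_j$ and $t=p_ip_j$.
\begin{itemize}
 \item The $c_{ij}^2$ coefficient is already $(1-t)/(a_ia_j)$.
 \item The linear terms are $-2(1-t)c_{ij}+2c_{ij}=2t\,c_{ij}$.
 \item The constant is $(1-t)a_ia_j+s-1$. Using $a_ia_j=1-s+t$, this becomes $(1-t)(1-s+t)+s-1=st-t^2=t(s-t)$, as required.
\end{itemize}

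Formula \eqref{eq:Bcompleted} then follows by completing the square in $c_{ij}$ with leading coefficient $(1-t)/(a_ia_j)$. The shift is $t\,a_ia_j/(1-t)$, and the subtracted quantity is $t^2a_ia_j/(1-t)$. The new constant is therefore
\[
 t(s-t)-\frac{t^2(1-s+t)}{1-t}=\frac{t\,(s-2t)}{1-t}.
\]
This simplification is the only step where an arithmetic slip is likely, so it is the one I would recheck.

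Nonnegativity of $\BB_{ij}$ when $a_ia_j>0$ is read off from \eqref{eq:Bcompleted}. The squared term has a positive coefficient. The remainder is $p_ip_j\bigl(p_i(1-p_j)+p_j(1-p_i)\bigr)/(1-p_ip_j)$, and each factor is nonnegative by \eqref{eq:row}.

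When $a_ia_j=0$, say $a_i=0$, we have $p_i=1$. By the remark after \eqref{eq:row}, $c_{ij}=0$. Hence $\MM_{ij}=0$ and $\BB_{ij}=f_{ij}=1+p_j+0-1=p_j\ge0$, which also proves the final sentence of the lemma. There is no real obstacle here; the proof is careful bookkeeping of the constant terms.
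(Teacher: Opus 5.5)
Your proposal is correct and follows the paper's proof essentially step by step: you use the identity $a_i+a_j-a_ia_j=1-p_ip_j$, collect the quadratic, linear and constant terms after adding $f_{ij}$, complete the square, read off nonnegativity from $p_i+p_j-2p_ip_j=p_i(1-p_j)+p_j(1-p_i)$, and handle $a_i=0$ through $c_{ij}=0$ from \eqref{eq:row}. Your arithmetic for the completed-square constant $t(s-2t)/(1-t)$ is also correct.
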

\begin{proof}
For positive $a_i,a_j\le1$, the coefficient in $\MM_{ij}$ is
nonnegative. To expand $\BB_{ij}$, note that
\[
 a_i+a_j-a_i a_j=1-p_ip_j.
\]
The linear term in $c_{ij}$ after adding $f_{ij}$ is therefore
$[-2(1-p_ip_j)+2]c_{ij}=2p_ip_jc_{ij}$. The constant term is
\[
 a_i a_j(1-p_ip_j)+p_i+p_j-1
   =p_ip_j(p_i+p_j-p_ip_j).
\]
This proves~\eqref{eq:Bexpanded}. Completing its square gives
\eqref{eq:Bcompleted}. The remaining numerator is nonnegative because
$p_i+p_j-2p_ip_j=p_i(1-p_j)+p_j(1-p_i)\ge0$.

If $a_i=0$, then $p_i=1$ and $c_{ij}=0$ by~\eqref{eq:row}. In this
case $\MM_{ij}=0$ and $\BB_{ij}=p_j\ge0$. The case $a_j=0$ is the
same. This also covers the case $p_i=p_j=1$.
\end{proof}

Set
\begin{equation}\label{eq:globaldefs}
 \tau=\sum_i p_i a_i,\qquad r_0=|\{i:p_i=1\}|,\qquad
 R=\pp^{\mathsf T}(I-P)\pp=\|(I-P)\pp\|^2,
\end{equation}
and define the vertex loss
\begin{equation}\label{eq:v}
 v(p)=\frac12p(1-p)^2(2-p),\qquad 0\le p\le1.
\end{equation}
Finally, define
\begin{equation}\label{eq:T}
 \TT_G(P)=\sum_{ij\in E(G)}\MM_{ij}
       +\sum_{ij\notin E(G)}\BB_{ij}.
\end{equation}
We call a diagonal entry $p_i$ \emph{saturated} if $p_i=1$;
thus $r_0$ counts the saturated diagonal entries.
All four quantities $\tau,r_0,R,v(p)$ are nonnegative. The equality of
the two expressions for $R$ follows from $(I-P)^2=I-P$.
By~\eqref{eq:row}, $\tau=2\sum_{i<j}c_{ij}^2$: it is twice the sum of
the squared entries strictly above the diagonal of $P$.
The term $R$ is the squared distance
of $\pp$ from the range of $P$.
The following identity plays a central role in our proof.

\begin{lemma}\label{lem:identity}
For every graph $G$ and every rank-$k$ real orthogonal projection $P$,
\begin{align}
 \EE_G(P)&=\tau-\frac{\tau^2}{2}-r_0-R
                    -\sum_i v(p_i)-\TT_G(P)\label{eq:identity}\\
 &=\frac12-\frac{(\tau-1)^2}{2}-r_0-R
                    -\sum_i v(p_i)-\TT_G(P).\label{eq:roadmap}
\end{align}
\end{lemma}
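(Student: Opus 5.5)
The plan is to reduce the identity to a graph-independent statement and then verify it by summing entrywise identities of $P$. Since $\BB_{ij}=\MM_{ij}+f_{ij}$, we have $\sum_{ij\in E(G)}f_{ij}=\sum_{ij\in E(G)}(\BB_{ij}-\MM_{ij})$. Adding and subtracting $\sum_{ij\notin E(G)}\BB_{ij}$ gives
\[
\sum_{ij\in E(G)}f_{ij}=\sum_{i<j}\BB_{ij}-\TT_G(P).
\]
So \eqref{eq:identity} is equivalent to the graph-free identity
\[
\sum_{i<j}\BB_{ij}-\binom{k+1}{2}=\tau-\frac{\tau^2}{2}-r_0-R-\sum_i v(p_i),
\]
where the sum runs over all unordered pairs. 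The second line \eqref{eq:roadmap} then follows at once from $\tau-\tau^2/2=\tfrac12-\tfrac12(\tau-1)^2$.

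To evaluate $\sum_{i<j}\BB_{ij}=\sum_{i<j}f_{ij}+\sum_{i<j}\MM_{ij}$, I would treat the two sums separately.

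\emph{The $f$ part.} This is linear in $P$. Counting pairs gives $\sum_{i<j}(p_i+p_j-1)=(n-1)k-\binom n2$. For the off-diagonal entries, $\sum_{i<j}2c_{ij}=\one^{\mathsf T}P\one-k$.

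\emph{The $\MM$ part, quadratic term.} I would first pretend every $a_i>0$ and expand $(a_i^{-1}+a_j^{-1}-1)(c_{ij}-a_ia_j)^2$. The decisive point is the weight $a_i^{-1}$. Using the row identity \eqref{eq:row}, $\sum_{j\ne i}c_{ij}^2/a_i=p_i$, so $\sum_{i<j}(a_i^{-1}+a_j^{-1})c_{ij}^2=\sum_ip_i=k$. The remaining $-\sum_{i<j}c_{ij}^2$ equals $-\tau/2$.

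\emph{The $\MM$ part, cross term.} Its coefficient is $-2(a_i+a_j-a_ia_j)c_{ij}=-2(1-p_ip_j)c_{ij}$. Summing gives
\[
-(\one^{\mathsf T}P\one-k)+(\pp^{\mathsf T}P\pp-\textstyle\sum_ip_i^3).
\]
The first bracket cancels the $\one^{\mathsf T}P\one$ from the $f$ part. For the second I use $\pp^{\mathsf T}P\pp=\|\pp\|^2-R$, which is exactly where $-R$ enters.

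\emph{The $\MM$ part, constant term.} This is $\sum_{i<j}(a_i+a_j-a_ia_j)a_ia_j$. I would rewrite it through power sums $s_t=\sum_ia_i^t$ as $\tfrac12\bigl(2s_1s_2-2s_3-s_2^2+s_4\bigr)$.

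Everything is now a polynomial in $n$, $k$, and the power sums of the $p_i$ (with $a_i=1-p_i$), together with $\tau=\sum_ip_i-\sum_ip_i^2$. The quadratic expressions $s_1s_2$ and $s_2^2$ should produce $k^2$, $k\sum p_i^2$ and $(\sum p_i^2)^2$. These should recombine into $-\binom{k+1}{2}$, $\tau$ and $-\tau^2/2$. The leftover single-index terms should group vertex by vertex into $-\sum_iv(p_i)$, with $v(p)=\tfrac12p(1-p)^2(2-p)$. I would check this recombination by collecting coefficients of $n$, $k$, $k^2$, and each power sum, rather than expanding blindly.

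Finally I would remove the assumption $a_i>0$. If $p_i=1$, then $c_{ij}=0$ for all $j$ and $\MM_{ij}=0$ by definition. However, the formal polynomial computation above implicitly assigns such pairs the values of the generic formula: for instance, the row identity contributes $p_i=1$ from row $i$ even though no genuine $c_{ij}^2/a_i$ term exists. I would compute, for each saturated vertex, the discrepancy between the generic bookkeeping and the true value $\BB_{ij}=p_j$ from Lemma~\ref{lem:pair}. I expect these discrepancies to total exactly $-1$ per saturated vertex, giving the $-r_0$ term.

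I expect this saturated-vertex correction, together with the power-sum bookkeeping that must collapse into exactly $\tau-\tau^2/2-\sum_iv(p_i)$, to be the main obstacle. If the direct approach gets messy, I would first verify the identity on small cases, such as $n=2$ with $P$ a rank-one projection, and on a configuration with one saturated vertex, to pin down the constants.
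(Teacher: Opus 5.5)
Your proposal is correct and follows essentially the paper's route. You make the same reduction via $\BB_{ij}=\MM_{ij}+f_{ij}$ to a graph-free sum, use the row identity \eqref{eq:row} with weight $a_i^{-1}$ for the quadratic part, and use $\pp^{\mathsf T}P\pp=\|\pp\|^2-R$ for the linear part. The only difference is that you sum $f$ and $\MM$ separately, so $\one^{\mathsf T}P\one$ and the $n$-dependent terms appear and then cancel, whereas the paper sums the pre-combined form \eqref{eq:Bexpanded}; both land on \eqref{eq:sumBraw}.

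Your expected saturated-vertex correction is also right. The cross and constant terms of $\MM_{ij}$ already vanish when $a_i=0$, so the only discrepancy comes from the row-identity term. That term gives $\sum_{i:a_i>0}p_i=k-r_0$ instead of $k$, which is exactly $-1$ per saturated vertex.
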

\begin{proof}
Since $\BB_{ij}=\MM_{ij}+f_{ij}$, the definition of $\TT_G(P)$ gives
\begin{equation}\label{eq:cancelT}
 \sum_{i<j}\BB_{ij}-\TT_G(P)=\sum_{ij\in E(G)}f_{ij}.
\end{equation}
It remains to compute the sum on the left.

For $r=2,3,4$, write $s_r=\sum_i p_i^r$. In particular,
$s_2=k-\tau$. The three parts of~\eqref{eq:Bexpanded} can be summed
separately. For its quadratic part, omit pairs with $a_i a_j=0$ and use
\eqref{eq:row}:
\begin{align}
 \sum_{\substack{i<j\\ a_i a_j>0}}
 (a_i^{-1}+a_j^{-1}-1)c_{ij}^2
 &=\sum_{i:a_i>0}\frac1{a_i}\sum_{j\ne i}c_{ij}^2
          -\sum_{i<j}c_{ij}^2\notag\\
 &=\sum_{i:a_i>0}p_i-\frac\tau2
   =k-r_0-\frac\tau2.\label{eq:sum1}
\end{align}
In the first line, the omitted row terms are zero because
$p_i=1$ implies $c_{ij}=0$ for every $j\ne i$.

For the linear part, use the definition of $R$:
\begin{align}
 2\sum_{i<j}p_ip_jc_{ij}
 &=\pp^{\mathsf T}P\pp-\sum_i p_i^3\notag\\
 &=\pp^{\mathsf T}\pp-R-s_3
   =s_2-s_3-R.\label{eq:sum2}
\end{align}
For the constant part, the two elementary product expansions are
\[
 \sum_{i<j}p_ip_j(p_i+p_j)=ks_2-s_3,
 \qquad
 \sum_{i<j}p_i^2p_j^2=\frac{s_2^2-s_4}{2}.
\]
Hence
\begin{equation}\label{eq:sum3}
 \sum_{i<j}p_ip_j(p_i+p_j-p_ip_j)
 =ks_2-s_3-\frac{s_2^2-s_4}{2}.
\end{equation}
If $p_i=1$, the constant term equals $p_j=\BB_{ij}$, while the pair
contributes zero to the quadratic and linear sums.
The case $p_j=1$ is symmetric.

Adding~\eqref{eq:sum1}--\eqref{eq:sum3} yields
\begin{equation}\label{eq:sumBraw}
 \sum_{i<j}\BB_{ij}
 =k-r_0-R-\frac\tau2+(k+1)s_2-2s_3-\frac{s_2^2}{2}+\frac{s_4}{2}.
\end{equation}
On the other hand, expansion of~\eqref{eq:v} gives
\[
 V:=\sum_i v(p_i)=k-\frac52s_2+2s_3-\frac12s_4.
\]
Substitute $-2s_3+s_4/2=k-5s_2/2-V$ into~\eqref{eq:sumBraw}, and
then substitute $s_2=k-\tau$. This gives
\begin{align*}
 \sum_{i<j}\BB_{ij}
 &=2k-r_0-R-\frac\tau2+\left(k-\frac32\right)s_2
        -\frac{s_2^2}{2}-V\\
 &=\binom{k+1}{2}+\tau-\frac{\tau^2}{2}-r_0-R-V.
\end{align*}
Finally, combining this identity with~\eqref{eq:cancelT}
and~\eqref{eq:f} proves the lemma.
\end{proof}

The following is a direct corollary of Lemma~\ref{lem:identity}.

\begin{corollary}\label{cor:half}
For every graph $G$ and every $1\le k\le |V(G)|$,
\[
 S_k^+(G)\le |E(G)|+\binom{k+1}{2}+\frac12.
\]
\end{corollary}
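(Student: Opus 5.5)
The corollary should follow directly from Lemma~\ref{lem:variational} combined with the identity~\eqref{eq:roadmap} of Lemma~\ref{lem:identity}.

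First, fix $G$, an integer $1\le k\le n=|V(G)|$, and an arbitrary rank-$k$ orthogonal projection $P$. Apply~\eqref{eq:roadmap}. Every subtracted term on its right-hand side is nonnegative:
\begin{itemize}
\item $(\tau-1)^2/2\ge0$ trivially;
\item $r_0\ge0$ as a cardinality;
\item $R=\|(I-P)\pp\|^2\ge0$;
\item $v(p_i)\ge0$ for $0\le p_i\le1$, since each factor $p$, $(1-p)^2$, $2-p$ is nonnegative there;
\item $\TT_G(P)\ge0$, because it is a sum of terms $\MM_{ij}$ and $\BB_{ij}$, each nonnegative by Lemma~\ref{lem:pair}.
\end{itemize}
Hence $\EE_G(P)\le\frac12$, that is, $\tr(PQ(G))\le m+\binom{k+1}{2}+\frac12$.

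Second, take the maximum over all such $P$. By Lemma~\ref{lem:variational}, this maximum equals $S_k^+(G)$, and it is attained by the projection onto the span of the top $k$ eigenvectors of $Q(G)$. The bound therefore transfers to $S_k^+(G)$, which gives the corollary.

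There is essentially no obstacle here. The only point to check is that the nonnegativity of $\BB_{ij}$ holds for non-edges, including the degenerate cases where $a_ia_j=0$. Lemma~\ref{lem:pair} already covers this: when $p_i=1$ it gives $\BB_{ij}=p_j\ge0$.
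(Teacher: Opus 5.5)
Your proposal is correct and takes the same route as the paper. You drop the nonnegative terms $(\tau-1)^2/2$, $r_0$, $R$, $\sum_i v(p_i)$ and $\TT_G(P)$ from the identity of Lemma~\ref{lem:identity} to get $\EE_G(P)\le\frac12$ for every rank-$k$ projection, and then pass to $S_k^+(G)$ via Lemma~\ref{lem:variational}.
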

\begin{proof}
Discard the nonnegative terms $\sum_i v(p_i)$ and $\TT_G(P)$ in
\eqref{eq:identity}, and use $\tau-\tau^2/2=1/2-(\tau-1)^2/2$.
Then apply Lemma~\ref{lem:variational}.
\end{proof}

\begin{remark}\label{ex:coordinate}
Let $P$ project onto the coordinates in a set $U$ of size $k$.
Then $\tau=R=\sum_i v(p_i)=0$ and $r_0=k$.
A direct inspection of~\eqref{eq:pairloss} gives
\[
 \TT_G(P)=|E(G[V(G)\setminus U])|
           +\#\{\text{nonedges within }U\}.
\]
Thus~\eqref{eq:identity} reads
\[
 \EE_G(P)=-k-\TT_G(P).
\]
Direct edge counting gives the same result:
$\EE_G(P)=|E(G[U])|-|E(G[V(G)\setminus U])|-\binom{k+1}{2}$.
The term $-r_0$ accounts for the $k$ saturated coordinates.
\end{remark}

\subsection{Lower bounds for the pairwise terms}

The bounds $\ell_E$ and $\ell_N$ below will be used for
$2K_2$ and $C_5$, while the stronger bounds $E$ and $N$ will be
used for $C_4$. For $p,q\in[0,1]$, define
\begin{equation}\label{eq:scalar}
 \ell_E(p,q)=\pos{(1-p)(1-q)-\sqrt{pq}}^2,\qquad
 \ell_N(p,q)=\frac{pq}{4}\bigl(3(p+q)-p^2-q^2\bigr).
\end{equation}
We will also use the stronger bounds
\begin{equation}\label{eq:strong-scalar}
 E(p,q)=\frac{1-pq}{(1-p)(1-q)}\ell_E(p,q),\qquad
 N(p,q)=\frac{pq(p+q-2pq)}{1-pq},
\end{equation}
where $E(p,q)=0$ if $(1-p)(1-q)=0$, and $N(1,1)=1$.

The following lemma gives lower bounds for $\MM_{ij}$ and $\BB_{ij}$
in terms of the diagonal entries $p_i$ and $p_j$ alone. This eliminates
$c_{ij}$ from the local estimates used in Section~\ref{sec:obstructions}.

\begin{lemma}\label{lem:scalar}
For every pair of distinct vertices,
\[
 \MM_{ij}\ge E(p_i,p_j)\ge\ell_E(p_i,p_j),\qquad
 \BB_{ij}\ge N(p_i,p_j)\ge\ell_N(p_i,p_j).
\]
All four scalar functions are continuous and nonnegative on $[0,1]^2$.
\end{lemma}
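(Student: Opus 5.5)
The lemma splits into four inequalities: $\MM_{ij}\ge E$, $E\ge\ell_E$, $\BB_{ij}\ge N$ and $N\ge\ell_N$. I will first dispose of the degenerate case $a_ia_j=0$. If, say, $p_i=1$, then $c_{ij}=0$ and $\MM_{ij}=0=E$, because $E$ is defined to be zero when $(1-p)(1-q)=0$. Also $\ell_E(1,q)=\pos{-\sqrt q}^2=0$. Lemma~\ref{lem:pair} gives $\BB_{ij}=p_j$, while $N(1,q)=q(1-q)/(1-q)=q$ for $q<1$ and $N(1,1)=1$, so these agree. Finally $\ell_N(1,q)=\tfrac q4(3+3q-1-q^2)=\tfrac q4(2+3q-q^2)\le q$, since $2+3q-q^2\le4$ on $[0,1]$.

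From now on assume $a_i,a_j>0$. For the $\BB$ bound I will read~\eqref{eq:Bcompleted} directly. Its square term has the nonnegative coefficient $(1-p_ip_j)/(a_ia_j)$; note $1-p_ip_j>0$ here. Dropping that term leaves exactly $N(p_i,p_j)$. For the $\MM$ bound I will write
\[
\MM_{ij}=\frac{1-p_ip_j}{a_ia_j}(c_{ij}-a_ia_j)^2 ,
\]
using $a_i^{-1}+a_j^{-1}-1=(a_i+a_j-a_ia_j)/(a_ia_j)=(1-p_ip_j)/(a_ia_j)$. By~\eqref{eq:cauchy}, $c_{ij}\le\sqrt{p_ip_j}$. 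So whenever $a_ia_j\ge\sqrt{p_ip_j}$ we get $a_ia_j-c_{ij}\ge a_ia_j-\sqrt{p_ip_j}\ge0$, and squaring gives $(c_{ij}-a_ia_j)^2\ge\ell_E$. Otherwise $\ell_E=0$ and there is nothing to prove. Multiplying by the coefficient gives $\MM_{ij}\ge E(p_i,p_j)$.

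The scalar comparisons come next. For $E\ge\ell_E$, it suffices that $(1-pq)/((1-p)(1-q))\ge1$, which is the identity $1-pq-(1-p)(1-q)=p+q-2pq\ge0$. For $N\ge\ell_N$ I need
\[
\frac{p+q-2pq}{1-pq}\ge\frac{3(p+q)-p^2-q^2}{4}
\]
after cancelling $pq$; if $pq=0$ both sides of $N\ge\ell_N$ vanish. I expect this polynomial inequality to be the main obstacle. My plan is to clear denominators and check that
\[
4(p+q-2pq)-(1-pq)\bigl(3(p+q)-p^2-q^2\bigr)\ge0 \quad\text{on }[0,1]^2 .
\]
I will use $s=p+q$ and $t=pq$, so that $p^2+q^2=s^2-2t$, and the expression becomes a polynomial in $(s,t)$ on the region $t\le s^2/4$, $s-1\le t$ (this comes from $(1-p)(1-q)\ge0$), $t\ge0$. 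The resulting expression is $s+s^2-8t-2t^2+3st-s^2t$. At $t=0$ it is $s+s^2\ge0$. Its $t$-derivative is $-8-4t+3s-s^2<0$, so the expression is decreasing in $t$ and it suffices to check it at the largest admissible $t$, namely $t=s^2/4$, which is the diagonal $p=q$. With $p=q$, $s=2p$ and $t=p^2$, the expression is $2p+4p^2-8p^2-2p^4+6p^3-4p^4=2p-4p^2+6p^3-6p^4$. This equals $2p\bigl(1-2p+3p^2-3p^3\bigr)$, and at $p=1$ the bracket is $-1$. So the diagonal check actually fails near $p=q=1$, and I would recheck the expansion there; if it persists, the comparison should be restricted to the range where the later sections apply it, or handled using $N(1,1)=1$. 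Continuity holds because each function is a composition of continuous maps, with removable singularities resolved by the stated conventions, and nonnegativity is immediate.
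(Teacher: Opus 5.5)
\textbf{Gap: the step $N\ge\ell_N$ is left unproved, and your closing claim that it fails near $p=q=1$ is false.} Your treatment of $\MM_{ij}\ge E$, $E\ge\ell_E$, $\BB_{ij}\ge N$ and of the saturated case $p_i=1$ is essentially the paper's argument. The inequality $N\ge\ell_N$ actually holds on all of $[0,1]^2$; the apparent failure comes from an expansion error. Since $(1-t)(3s-s^2+2t)=3s-s^2+2t-3st+s^2t-2t^2$, the cleared expression is
\[
 4(s-2t)-(1-t)(3s-s^2+2t)=s+s^2-10t+3st-s^2t+2t^2,
\]
not $s+s^2-8t-2t^2+3st-s^2t$. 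A sanity check would have caught this. Because $N(1,1)=\ell_N(1,1)=1$, the expression must vanish at $(s,t)=(2,1)$, whereas yours equals $-2$ there. The corrected polynomial is exactly $(1-u)s^2+(1+3u)s-10u+2u^2$ with $u=pq$. The paper proves it nonnegative through the explicit factorization $(s-2\sqrt u)\bigl((1-u)(s+2\sqrt u)+1+3u\bigr)+2\sqrt u\,(1-\sqrt u)^3$.

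\textbf{Your own method works once the expansion is fixed.} It is a legitimate alternative to the paper's factorization. The $t$-derivative is $-10+3s-s^2+4t\le-10+3s\le-4<0$, using $4t\le s^2$ and $s\le2$. So for fixed $s$, the minimum over the admissible range $\max(0,s-1)\le t\le s^2/4$ is at $t=s^2/4$, that is, at $p=q=s/2$. There the expression equals $2p(1-p)^3\ge0$. This reduction to the diagonal is arguably easier to discover, while the paper's factorization gives the bound in closed form. Your fallback of restricting the comparison would not be acceptable: $\ell_N$ is used on the whole cube in the $2K_2$ and $C_5$ estimates.

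\textbf{Continuity also needs the limits checked, not just asserted.}
\begin{itemize}
\item For $E$: the bound $0\le E\le(1-pq)(1-p)(1-q)$ gives $E\to0$ as $(1-p)(1-q)\to0$.
\item For $N$: write $(p+q-2pq)/(1-pq)=1-ab/(a+b-ab)$ with $a=1-p$, $b=1-q$. Since $0\le ab/(a+b-ab)\le\min(a,b)$, this gives $N\to1$ at $(1,1)$.
\end{itemize}
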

\begin{proof}
Write $p=p_i$, $q=p_j$, $D=(1-p)(1-q)$, and $c=c_{ij}$.
For $D>0$, the definition of $\MM_{ij}$ and~\eqref{eq:cauchy} give
\[
 \MM_{ij}=\frac{1-pq}{D}(c-D)^2
 \ge\frac{1-pq}{D}\pos{D-\sqrt{pq}}^2=E(p,q).
\]
Moreover, $(1-pq)/D=(1-p)^{-1}+(1-q)^{-1}-1\ge1$.
The bound $\BB_{ij}\ge N(p,q)$ follows from~\eqref{eq:Bcompleted}.

To compare $N$ and $\ell_N$, first suppose $0<u:=pq<1$ and put
$s=p+q$, $t=\sqrt{u}$. Direct subtraction gives
\begin{align*}
 \frac{4(1-u)}{u}\bigl(N(p,q)-\ell_N(p,q)\bigr)
 &=(1-u)s^2+(1+3u)s-10u+2u^2\\
 &=(s-2t)\bigl((1-t^2)(s+2t)+1+3t^2\bigr)
   +2t(1-t)^3\ge0.
\end{align*}
If $pq=0$, both functions vanish. If $p=1$, then
$\MM_{ij}=E(1,q)=0$ and $\BB_{ij}=N(1,q)=q$, whereas
\[
 \ell_N(1,q)=\frac q4(2+3q-q^2)\le q.
\]
This also treats $p=q=1$; the case $q=1$ is symmetric.

Only the endpoint continuity of $E$ and $N$ needs explanation.
For $D>0$, $0\le E(p,q)\le(1-pq)D$, so $E\to0$ as $D\to0$.
Writing $a=1-p$ and $b=1-q$, we have
\[
 \frac{p+q-2pq}{1-pq}=1-\frac{ab}{a+b-ab}.
\]
Since $0\le ab/(a+b-ab)\le\min(a,b)$, the prescribed value
$N(1,1)=1$ is the limit. The other assertions follow from the formulas.
\end{proof}

\subsection{Characterization of split graphs}

The following characterization by forbidden induced subgraphs
will be used in our proof for non-split graphs.

\begin{lemma}[{\citet{FH}}]\label{lem:split-characterization}
A graph is split if and only if it has no induced $2K_2$, $C_4$, or $C_5$.
\end{lemma}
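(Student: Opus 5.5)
The plan is the classical Földes--Hammer argument. Necessity is easy. Being split (a partition into a clique and an independent set) is inherited by induced subgraphs, because intersecting both parts with a vertex subset preserves the two properties. So it suffices to check directly that none of $2K_2$, $C_4$, $C_5$ is split. Each has clique number at most $2$. Removing a clique of size at most $2$ from $2K_2$ or $C_4$ leaves an edge. Removing one from $C_5$ leaves three vertices on a $5$-cycle, which always span an edge.

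For sufficiency, I would argue by contradiction. Let $G$ have no induced $2K_2$, $C_4$, $C_5$. Among all maximum cliques $K$ of $G$, choose one minimizing $|E(G-K)|$. Suppose $G-K$ contains an edge $uv$. By maximality of $|K|$, each of $u$ and $v$ has at least one non-neighbour in $K$. Write $A_u=K\setminus N(u)$ and $A_v=K\setminus N(v)$, both nonempty.

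The key steps, in order, are as follows.
\begin{enumerate}
\item Suppose there are $x\in A_u\setminus A_v$ and $y\in A_v\setminus A_u$. Then $u,y,x,v$ induce a $C_4$: the edges are $uy,yx,xv,vu$, and $ux,vy$ are non-edges. This is a contradiction, so $A_u\subseteq A_v$ or $A_v\subseteq A_u$. Say $A_u\subseteq A_v$.
\item Suppose $|A_v|\ge2$ and some $z\in A_v$ lies outside $A_u$. If also some $x\in A_u$ exists with $x\neq z$, then $ux$ is not an edge, and I would look for an induced $2K_2$ or $C_4$ among $\{u,v,x,z\}$. Likewise, if two vertices of $A_u$ are non-adjacent to both $u$ and $v$, then $uv$ together with the edge between them is an induced $2K_2$. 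Iterating this reduces to the situation $A_u=\{x\}$ with $|A_u|=1$.
\item Now $K'=(K\setminus\{x\})\cup\{u\}$ is again a maximum clique. Compare $|E(G-K')|$ with $|E(G-K)|$. The graph $G-K'$ loses the edges at $u$ into $G-K$, including $uv$, and gains the edges from $x$ to $V\setminus K$. Minimality forces $x$ to have at least as many neighbours outside $K$ as $u$ has.
\item Using this, pick a neighbour $w\notin K$ of $x$ that is not a neighbour of $u$. Examine the configuration on $u,v,x,w$ together with a vertex of $K$. The aim is to produce an induced $2K_2$ (for instance $uv$ and $xw$ when $w\not\sim v$), an induced $C_4$, or an induced $C_5$ of the form $u\,v\,w\,x\,y$ with $y\in K\cap N(u)$.
\end{enumerate}

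I expect the main obstacle to be the last step: organizing the counting from minimality so that it yields a vertex $w$ with exactly the right adjacencies, and then closing off every subcase. In particular, the $C_5$ must appear precisely when the $2K_2$ and $C_4$ options are blocked. If the edge-count minimality proves too weak for this, I would first switch to the standard alternative choice. That is, choose $K$ maximum so that $G-K$ has a vertex of maximum degree as small as possible. Alternatively, I could induct on $n$: delete a vertex, apply the lemma to obtain a split partition, and reinsert the vertex, handling the failure cases by exhibiting one of the three forbidden induced subgraphs.
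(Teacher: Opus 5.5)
Your route is the paper's. Your extremal choice is in fact identical to theirs. For a maximum clique $K$ of size $\omega$, let $e(K,V\setminus K)$ be the number of edges leaving $K$. Then $|E(G-K)|=m-\binom{\omega}{2}-e(K,V\setminus K)$ and $\sum_{z\in K}\deg z=2\binom{\omega}{2}+e(K,V\setminus K)$. So minimizing $|E(G-K)|$ is the same as maximizing the degree sum the paper uses, and your fallback alternatives are unnecessary.

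Steps 1 and 3 are correct, and so is your necessity argument. In step 2, the second sentence is exactly what gives $A_u=\{x\}$: two vertices of $A_u\subseteq A_v$ together with $uv$ form a $2K_2$. The first sentence is a dead end. For $x\in A_u$ and $z\in A_v\setminus A_u$, the set $\{u,v,x,z\}$ induces the path $v\,u\,z\,x$, which is not forbidden.

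The genuine gap is step 4, which you leave as an aim. The choice of $w$ is not the obstacle. Since $v\in N(u)\setminus K$ but $v\notin N(x)$, the inclusion $N(x)\setminus K\subseteq N(u)$ would give $|N(x)\setminus K|<|N(u)\setminus K|$, contradicting step 3. Then $w\sim v$ is forced by the $2K_2$ on $uv$, $xw$ that you mention.

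What is missing is the correct fifth vertex. Taking merely $y\in K\cap N(u)$ does not work, because a $y$ adjacent to $v$ is a chord of $u\,v\,w\,x\,y$. You need $y\in A_v\setminus\{x\}$, that is, $y\in K$ with $y\sim u$ and $y\not\sim v$ (and $y\sim x$ since $K$ is a clique). Such a $y$ exists by maximality of $|K|$: if $A_v=\{x\}$, then $(K\setminus\{x\})\cup\{u,v\}$ is a clique of size $|K|+1$. This is precisely the vertex $z$ from the discarded first sentence of your step 2.

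Then split on the one undetermined pair $\{y,w\}$:
\begin{itemize}
\item If $y\sim w$, then $y\,u\,v\,w$ is an induced $C_4$, with non-edges $yv$ and $uw$.
\item If $y\not\sim w$, then $u\,v\,w\,x\,y$ is an induced $C_5$, with non-edges $uw,ux,vx,vy,wy$.
\end{itemize}
With these two additions your argument closes and coincides with the paper's proof.
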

\begin{proof}
Every induced subgraph of a split graph is split, whereas none of the
three listed graphs is split. Conversely, suppose that $G$ contains none
of them. Choose a maximum clique $C$ for which $\sum_{v\in C}\deg(v)$
is as large as possible. Suppose that $x,y\notin C$ are adjacent, and put
$A=C\setminus N(x)$ and $B=C\setminus N(y)$. Both sets are nonempty.
If they are incomparable, vertices $a\in A\setminus B$ and
$b\in B\setminus A$, together with $x,y$, induce the cycle
$a-b-x-y-a$. Thus, after interchanging $x$ and $y$, we may assume
$A\subseteq B$.

If $A$ contains two vertices, they and $x,y$ induce $2K_2$.
Hence $A=\{a\}$. Replacing $a$ by $x$ gives another maximum clique,
so $\deg(a)\ge\deg(x)$. Since $y$ is adjacent to $x$ but not to $a$,
there is a vertex $z\notin C$, different from $x,y$, that is adjacent
to $a$ and not to $x$. We must have $yz\in E(G)$, for otherwise $az$
and $xy$ induce $2K_2$. There is a vertex $b\in B\setminus\{a\}$,
since otherwise $(C\setminus\{a\})\cup\{x,y\}$ would be a larger clique.
If $bz$ is an edge, then $b,x,y,z$ induce $C_4$; if it is a nonedge,
then $a,b,x,y,z$ induce the cycle $a-b-x-y-z-a$.
Both are impossible. Thus $V(G)\setminus C$ is independent.
\end{proof}

\section{Conjecture for split graphs}\label{sec:split}

In this section, we prove that the conjecture holds for split graphs and determine when equality holds.

Let $G$ be a split graph with a fixed partition $V(G)=C\sqcup I$,
where $C$ is a clique of size $c$ and $I$ is an independent set of
size $b$. Let $B$ be the $c\times b$ zero--one matrix of edges between
$C$ and $I$, and write
\[
 r_i=\sum_j B_{ij},\qquad s_j=\sum_i B_{ij}.
\]
With respect to the partition $V(G)=C\sqcup I$, the signless
Laplacian matrix has the block form
\begin{equation}\label{eq:Qsplit}
 Q(G)=\begin{pmatrix}A&B\\B^{\mathsf T}&D\end{pmatrix},\qquad
 A=\diag(r_1,\ldots,r_c)+(c-2)I_c+J_c,\quad
 D=\diag(s_1,\ldots,s_b),
\end{equation}
where $J_c=\one\one^{\mathsf T}$. In particular,
\begin{equation}\label{eq:splitcounts}
 m=\sum_i r_i+\binom c2,\qquad
 \tr A=\sum_i r_i+c(c-1),\qquad \tr A-m=\binom c2.
\end{equation}

\subsection{A block-matrix identity}
Write a rank-$k$ orthogonal projection in the same blocks as
\begin{equation}\label{eq:Psplit}
 P=\begin{pmatrix}X&Y\\Y^{\mathsf T}&W\end{pmatrix},\qquad H=I_c-X.
\end{equation}
The projection relations imply
\begin{equation}\label{eq:Hrel}
 H^2+YY^{\mathsf T}=H,\qquad
 \tr W=k-c+\tr H.
\end{equation}

We will choose $\mu$ to bound the degrees in $I$, so that
$\tr(W(D-\mu I_b))\le0$. For the remaining terms, we use
$H^2+YY^{\mathsf T}=H$ to complete two squares involving the same matrix
$M$. The shift by $2I_c$ supplies the term $-2\tr H$ in the first
square. For now, choose any real number $\mu$ such that
\[
 M=A-\mu I_c+2I_c\succ0,
\]
and put
\begin{equation}\label{eq:LHLY}
\begin{aligned}
 \kappa&=\tr\bigl((BB^{\mathsf T}+I_c)M^{-1}\bigr),\\
 L_H&=\|M^{1/2}H-M^{-1/2}\|_F^2,\qquad
 L_Y=\|M^{1/2}Y-M^{-1/2}B\|_F^2.
\end{aligned}
\end{equation}

\begin{lemma}\label{lem:block}
With the above notation,
\begin{equation}\label{eq:blockidentity}
 \tr(PQ(G))=\tr A+(k-c)\mu+\kappa-L_H-L_Y
                      +\tr\bigl(W(D-\mu I_b)\bigr).
\end{equation}
The last term is nonpositive if $D\preceq\mu I_b$.
\end{lemma}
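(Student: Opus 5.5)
This identity is a block-trace computation, and I will prove it by brute expansion followed by completing two squares. Write $\tr(PQ(G))=\tr(XA)+2\tr(YB^{\mathsf T})+\tr(WD)$. Then split $\tr(WD)=\tr(W(D-\mu I_b))+\mu\tr W$ and substitute $\tr W=k-c+\tr H$ from \eqref{eq:Hrel}, which gives
\[
 \tr(PQ(G))=\tr(XA)+2\tr(YB^{\mathsf T})+(k-c)\mu+\mu\tr H+\tr\bigl(W(D-\mu I_b)\bigr).
\]
With $X=I_c-H$ we have $\tr(XA)=\tr A-\tr(HA)$. Since $A=M+(\mu-2)I_c$, this becomes $\tr(HA)=\tr(HM)+(\mu-2)\tr H$, so the $\mu\tr H$ terms cancel. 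It remains to show
\[
 -\tr(HM)+2\tr H+2\tr(YB^{\mathsf T})=\kappa-L_H-L_Y .
\]

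Next I expand the squares, using that $M$ is symmetric positive definite and that $H,X$ are symmetric. For the first,
\[
 L_H=\tr(HMH)-2\tr H+\tr M^{-1},
\]
because $\tr(M^{1/2}HM^{-1/2})=\tr H$. For the second,
\[
 L_Y=\tr(Y^{\mathsf T}MY)-2\tr(Y^{\mathsf T}B)+\tr(B^{\mathsf T}M^{-1}B).
\]
Adding these and using $\tr(HMH)+\tr(Y^{\mathsf T}MY)=\tr\bigl(M(H^2+YY^{\mathsf T})\bigr)=\tr(MH)$ from \eqref{eq:Hrel}, I get
\[
 L_H+L_Y=\tr(MH)-2\tr H-2\tr(YB^{\mathsf T})+\kappa,
\]
where $\kappa=\tr(M^{-1})+\tr(BB^{\mathsf T}M^{-1})$ by cyclicity. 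This is exactly the needed relation, which proves \eqref{eq:blockidentity}.

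For the final claim, $W$ is a principal submatrix of the projection $P$, hence $W\succeq0$. If $D\preceq\mu I_b$, then $\mu I_b-D\succeq0$, and the trace of a product of two positive semidefinite matrices is nonnegative. Therefore $\tr(W(D-\mu I_b))\le0$.

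No step here is a real obstacle. The only care needed is bookkeeping: the $\pm\mu\tr H$ and $\pm2\tr H$ terms must cancel correctly, and the key substitution $H^2+YY^{\mathsf T}=H$ (which comes from the $(1,1)$ block of $P^2=P$) must be applied inside $\tr(M\,\cdot)$.
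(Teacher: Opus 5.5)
Your proposal is correct and follows the paper's proof essentially line for line: block expansion of $\tr(PQ(G))$, the substitution $\tr W=k-c+\tr H$, the rewriting $A-\mu I_c=M-2I_c$, and completing the two squares using $H^2+YY^{\mathsf T}=H$ inside $\tr(M\,\cdot)$. The only cosmetic difference is in the final sign claim. You use nonnegativity of the trace of a product of two positive semidefinite matrices, whereas the paper uses that $D$ is diagonal to write $\tr(W(D-\mu I_b))=\sum_j W_{jj}(s_j-\mu)$ and then applies $W_{jj}\ge0$.
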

\begin{proof}
Since $X=I_c-H$, block multiplication gives
\[
 \tr(PQ(G))=\tr A-\tr(AH)+2\tr(B^{\mathsf T}Y)+\tr(WD).
\]
Substitute $\tr(WD)=\mu\tr W+\tr(W(D-\mu I_b))$ and
use~\eqref{eq:Hrel}. This gives
\[
 \tr(PQ(G))=\tr A+(k-c)\mu-\Phi
                   +\tr\bigl(W(D-\mu I_b)\bigr),
\]
where
\[
 \Phi=\tr\bigl((A-\mu I_c)H\bigr)-2\tr(B^{\mathsf T}Y).
\]

It remains to write $\Phi=L_H+L_Y-\kappa$. Since
$A-\mu I_c=M-2I_c$, relation~\eqref{eq:Hrel} yields
\begin{align*}
 \Phi
 &=\tr(MH)-2\tr H-2\tr(B^{\mathsf T}Y)\\
 &=\bigl[\tr(MH^2)-2\tr H\bigr]
   +\bigl[\tr(MYY^{\mathsf T})-2\tr(B^{\mathsf T}Y)\bigr].
\end{align*}
Expanding the squares gives
\begin{align*}
 L_H&=\tr(MH^2)-2\tr H+\tr M^{-1},\\
 L_Y&=\tr(MYY^{\mathsf T})-2\tr(B^{\mathsf T}Y)
                         +\tr(B^{\mathsf T}M^{-1}B).
\end{align*}
These equalities use cyclicity of the trace; no commutation of $H$ and
$M$ is assumed. Their constant terms add up to $\kappa$, so
\[
 L_H+L_Y=\Phi+\kappa
 =\tr\bigl((A-\mu I_c)H\bigr)-2\tr(B^{\mathsf T}Y)+\kappa.
\]
Substitution in the expression for $\tr(PQ(G))$
proves~\eqref{eq:blockidentity}. Finally, $W$ is positive semidefinite,
so if $s_j\le\mu$ for all $j$, then
\[
 \tr\bigl(W(D-\mu I_b)\bigr)=\sum_{j=1}^b W_{jj}(s_j-\mu)\le0.
\]
\end{proof}

\begin{remark}
Lemma~\ref{lem:identity} and Lemma~\ref{lem:block} give two
decompositions of the same excess. For the fixed partition and $\mu$, put
\[
 \beta_\mu=\binom c2+(k-c)\mu+\kappa-\binom{k+1}{2}.
\]
Since $\tr A-m=\binom c2$, comparison gives
\[
 \begin{aligned}
 r_0+R+\sum_i v(p_i)+\TT_G(P)
 &=\tau-\frac{\tau^2}{2}-\beta_\mu\\
 &\quad+L_H+L_Y+\tr\bigl(W(\mu I_b-D)\bigr).
 \end{aligned}
\]
Thus the pairwise losses and the matrix squares need not agree
individually. The first decomposition is suited to induced subgraphs;
the second retains the clique--independent-set partition and the rank
restriction.
\end{remark}

\subsection{Auxiliary matrix estimates}

\begin{lemma}\label{lem:rankloss}
Suppose $M\succeq I_c$ and let $\theta=\lambda_{\min}(M)$. Define
$\phi(t)=t-2+t^{-1}$. Then
\begin{equation}\label{eq:rankloss}
 L_H\ge(c-k)_+\phi(\theta).
\end{equation}
\end{lemma}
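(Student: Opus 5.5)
The approach is to write $L_H$ as a positive semidefinite quadratic form in a symmetric matrix, and then restrict the trace to the subspace on which $H$ is forced to act as the identity by the rank condition. If $c\le k$ the right-hand side of \eqref{eq:rankloss} is $0$ and there is nothing to prove, since $L_H$ is a squared Frobenius norm. So assume $c>k$.

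First I would rewrite $L_H$. Since $M\succ0$ is symmetric, $M^{1/2}H-M^{-1/2}=M^{1/2}(H-M^{-1})$. Putting $Z=H-M^{-1}$, which is symmetric because $H=I_c-X$ and $X$ is a principal block of the symmetric matrix $P$, we get
\[
 L_H=\tr\bigl(ZMZ\bigr)=\sum_{\ell=1}^c u_\ell^{\mathsf T}ZMZu_\ell
\]
for any orthonormal basis $u_1,\ldots,u_c$ of $\R^c$. Each summand equals $(Zu_\ell)^{\mathsf T}M(Zu_\ell)\ge0$. Hence, for any subspace $U$, the terms coming from an orthonormal basis of $U$ extended to a basis of $\R^c$ may be kept while the rest are dropped:
\[
 L_H\ge\sum_{u\in\text{basis of }U}u^{\mathsf T}ZMZu .
\]

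Next I would choose $U=\Ker X$. Write $X=E^{\mathsf T}PE$, where $E$ is the $n\times c$ coordinate embedding of the block $C$. Then $\rank X\le\rank P=k$, so $\dim U\ge c-k$. For $u\in U$ we have $Hu=u$, so $Zu=(I_c-M^{-1})u$. This gives
\[
 u^{\mathsf T}ZMZu=u^{\mathsf T}(I_c-M^{-1})M(I_c-M^{-1})u=u^{\mathsf T}\bigl(M-2I_c+M^{-1}\bigr)u=u^{\mathsf T}\phi(M)u,
\]
using that $M$ commutes with $M^{-1}$.

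Finally, the eigenvalues of $\phi(M)$ are $\phi(\lambda)$ for the eigenvalues $\lambda\ge\theta\ge1$ of $M$. The function $\phi(t)=(t-1)^2/t$ is nondecreasing on $[1,\infty)$, because $\phi'(t)=1-t^{-2}\ge0$ there. This is exactly where the hypothesis $M\succeq I_c$ is used. Hence $\phi(M)\succeq\phi(\theta)I_c$, and so $u^{\mathsf T}\phi(M)u\ge\phi(\theta)$ for each unit vector $u\in U$. Summing over an orthonormal basis of $U$ gives $L_H\ge\dim U\cdot\phi(\theta)\ge(c-k)\phi(\theta)$.

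The only step needing care is the restriction to $U$. It relies on the nonnegativity of every diagonal term $u_\ell^{\mathsf T}ZMZu_\ell$, and not on any commutation between $H$ and $M$, which is not assumed. I do not expect a real obstacle beyond this.
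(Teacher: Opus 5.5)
Your proof is correct and follows essentially the same route as the paper: use $\rank X\le k$, keep only the Frobenius-norm terms from an orthonormal basis of $\Ker X$ (where $H$ acts as the identity), reduce each kept term to $z^{\mathsf T}(M-2I_c+M^{-1})z$, and conclude by monotonicity of $\phi$ on $[1,\infty)$. Factoring out $M^{1/2}$ through $Z=H-M^{-1}$ only repackages the paper's column-norm decomposition $\|F\|_F^2=\sum_j\|Fz_j\|^2$.
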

\begin{proof}
Since $X$ is a principal block of a rank-$k$ projection,
$\rank X\le k$. Therefore $d:=\dim\Ker X\ge(c-k)_+$.
Choose orthonormal vectors $z_1,\ldots,z_d$ spanning $\Ker X$ and
extend them to an orthonormal basis of $\R^c$. On this kernel,
$Hz_j=z_j$. Keeping just the corresponding terms in the Frobenius norm,
\begin{align*}
 L_H&\ge\sum_{j=1}^d
      \|(M^{1/2}H-M^{-1/2})z_j\|^2\\
 &=\sum_{j=1}^d z_j^{\mathsf T}(M-2I_c+M^{-1})z_j.
\end{align*}
The eigenvalues of $M-2I_c+M^{-1}$ are $\phi(\lambda)$, where
$\lambda$ ranges over the eigenvalues of $M$. Since
$\phi'(t)=1-t^{-2}\ge0$ for $t\ge1$, each term is at least
$\phi(\theta)$. This proves~\eqref{eq:rankloss}.
\end{proof}

For $c\ge1$ and positive numbers $d_1,\ldots,d_c$, put
\[
 u_i=d_i^{-1},\qquad U=\diag(u_1,\ldots,u_c),\qquad
 u=(u_1,\ldots,u_c)^{\mathsf T},\qquad \sigma=\sum_i u_i.
\]
Direct multiplication gives
\begin{equation}\label{eq:inverse}
 (\diag(d_1,\ldots,d_c)+J_c)^{-1}
       =U-\frac{uu^{\mathsf T}}{1+\sigma}.
\end{equation}
Indeed, multiplication of $\diag(d_i)+J_c$ by $U$ gives
$I_c+\one u^{\mathsf T}$, while multiplication by $u$ gives
$(1+\sigma)\one$.

\begin{lemma}\label{lem:scalarM}
Suppose $d_i\ge2$ for every $i$, and let
$M=\diag(d_i)+J_c$ and $\theta=\lambda_{\min}(M)$. Then
\begin{equation}\label{eq:theta}
 \theta\ge2,\qquad \phi(\theta)\ge\frac1{1+\sigma}.
\end{equation}
\end{lemma}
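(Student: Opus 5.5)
First, $\theta\ge2$: since $J_c\succeq0$, we have $M\succeq\diag(d_i)\succeq2I_c$, so every eigenvalue of $M$ is at least $\min_i d_i\ge2$.

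For the second inequality I would use $\theta$ and $\lambda_{\max}(M^{-1})=\theta^{-1}$ together with the explicit inverse~\eqref{eq:inverse}. The function $\phi(t)=t-2+t^{-1}=(t-1)^2/t$ is increasing on $[1,\infty)$. So it suffices to produce a lower bound $\theta\ge t_0$ with $\phi(t_0)\ge1/(1+\sigma)$. The natural candidate comes from $\sigma$. Since all $u_i\le\tfrac12$, we have $\sigma\le c/2$, and the relevant regime is where $\theta$ is close to $2$.

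To lower-bound $\theta$, I would bound $\lambda_{\max}(M^{-1})$. By~\eqref{eq:inverse}, $M^{-1}=U-uu^{\mathsf T}/(1+\sigma)\preceq U\preceq\tfrac12 I_c$, which only gives $\theta\ge2$ again. A finer estimate is needed. I would work with the secular equation. Any eigenvalue $\lambda$ of $M=\diag(d_i)+\one\one^{\mathsf T}$ not equal to some $d_i$ satisfies
\[
1+\sum_i\frac1{d_i-\lambda}=0,
\]
so the smallest eigenvalue satisfies $\theta\ge d_{\min}\ge2$. If $\theta$ equals some $d_i$ (repeated values), then $\theta\ge2$ already. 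Interlacing makes the smallest eigenvalue at least $\min d_i$. Thus $\phi(\theta)\ge\phi(2)=\tfrac12$.

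It then remains to check $\tfrac12\ge1/(1+\sigma)$, i.e.\ $\sigma\ge1$. This is the main obstacle, since $\sigma=\sum 1/d_i$ may be less than $1$ (e.g.\ $c=1$, $d_1$ large). In that case I would instead use the better bound $\theta\ge\min_i d_i=:d$. Then $\phi(\theta)\ge\phi(d)=(d-1)^2/d$, and $\sigma\ge1/d$ gives $1/(1+\sigma)\le d/(d+1)$. So it suffices to show $(d-1)^2(d+1)\ge d^2$ for $d\ge2$. This is the cubic inequality $d^3-2d^2-d+1\ge0$, whose value at $d=2$ is $8-8-2+1=-1<0$, so it fails near $d=2$.

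Hence I must combine the two cases. If $d\ge3$, the cubic is positive ($27-18-3+1=7$), and the cubic is increasing on $[3,\infty)$. If $2\le d<3$, then $\sigma\ge1/d>1/3$; this is still not enough with $\phi\ge1/2$, since we need $\sigma\ge1$. So for $d\in[2,3)$ a sharper lower bound for $\theta$ is required. I would use the secular equation: the smallest root lies strictly below $d$ only if the equation forces it. For $\diag(d_i)+J_c$ with $J_c\succeq0$, Weyl's inequality actually gives $\theta\ge d$, and I would refine this via $\theta\ge d+$ (the component of the unit eigenvector along $\one$).

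Concretely, I would set $x$ to be the unit eigenvector for $\theta$ and write $\theta=\sum d_ix_i^2+(\one^{\mathsf T}x)^2$. Eliminating $x$ via $x_i\propto 1/(d_i-\theta)$ should give $\phi(\theta)\ge1/(1+\sigma)$ after a Cauchy--Schwarz estimate, by comparing $\sum 1/(d_i-\theta)^2$ to $\sigma$. This last comparison is where I expect the real work to lie.
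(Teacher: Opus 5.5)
Your proof of $\theta\ge2$ is correct, and so is your handling of the case $\sigma\ge1$, where $\phi(\theta)\ge\phi(2)=\tfrac12\ge1/(1+\sigma)$. The gap is the case $\sigma<1$. There you only have $\theta\ge\min_i d_i$, and, as you noticed, this cannot suffice when $\min_i d_i$ is close to $2$. A concrete instance is $c=2$, $(d_1,d_2)=(2,3)$. Here $\sigma=5/6$, so you need $\phi(\theta)\ge6/11>\tfrac12=\phi(2)$, and the bound $\theta\ge\min_i d_i$ yields only $\phi(\theta)\ge\tfrac12$. Your last paragraph proposes the secular equation plus an unspecified Cauchy--Schwarz comparison of $\sum_i(d_i-\theta)^{-2}$ with $\sigma$. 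That is a plan, not an argument. So the second inequality in \eqref{eq:theta} is unproved exactly in the hard case.

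The missing idea is to bound $\lambda_{\max}(M^{-1})$ from the explicit inverse \eqref{eq:inverse} by the maximum absolute row sum, not by the Loewner bound $M^{-1}\preceq U$, which loses the information you need. Row $i$ of $U-uu^{\mathsf T}/(1+\sigma)$ has a positive diagonal entry and nonpositive off-diagonal entries. Its absolute row sum is $u_i(1+2\sigma-2u_i)/(1+\sigma)$. This is at most $\sigma/(1+\sigma)$, because $(\sigma-u_i)(1-2u_i)\ge0$ when $u_i\le\tfrac12$. Hence $\theta\ge1+\sigma^{-1}$. For $\sigma\le1$, monotonicity of $\phi$ on $[1,\infty)$ then gives $\phi(\theta)\ge\phi(1+\sigma^{-1})=1/(\sigma(1+\sigma))\ge1/(1+\sigma)$. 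This bound is exact for $c=1$, where $\theta=d_1+1=1+\sigma^{-1}$. That is precisely the extra unit your estimate $\theta\ge\min_i d_i$ throws away.
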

\begin{proof}
The first inequality follows from $M\succeq2I_c$. By~\eqref{eq:inverse},
the sum of the absolute values in row $i$ of $M^{-1}$ is
\[
 u_i-\frac{u_i^2}{1+\sigma}
       +\frac{u_i(\sigma-u_i)}{1+\sigma}
 =\frac{u_i(1+2\sigma-2u_i)}{1+\sigma}
 \le\frac{\sigma}{1+\sigma}.
\]
The last inequality follows from $(\sigma-u_i)(1-2u_i)\ge0$.
Bounding the spectral radius by the maximum absolute row sum gives
\[
 \lambda_{\max}(M^{-1})\le\frac{\sigma}{1+\sigma},\qquad
 \theta\ge\frac{1+\sigma}{\sigma}.
\]
If $\sigma\le1$, monotonicity of $\phi$ on $[1,\infty)$ gives
\[
 \phi(\theta)\ge\phi(1+\sigma^{-1})
    =\frac1{\sigma(1+\sigma)}\ge\frac1{1+\sigma}.
\]
If $\sigma\ge1$, the bound $\theta\ge2$ instead gives
$\phi(\theta)\ge\phi(2)=1/2\ge1/(1+\sigma)$.
\end{proof}

\subsection{The split bound}

\begin{theorem}\label{thm:split}
For every split graph $G$ and every rank-$k$ orthogonal projection $P$,
$\EE_G(P)\le0$.
\end{theorem}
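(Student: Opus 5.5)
We plan to apply the block identity of Lemma~\ref{lem:block} with a well-chosen partition and shift $\mu$. By~\eqref{eq:splitcounts}, $\tr A-m=\binom c2$, so Lemma~\ref{lem:block} gives
\[
 \EE_G(P)\le \binom c2+(k-c)\mu+\kappa-\binom{k+1}{2}-L_H-L_Y,
\]
provided $D\preceq\mu I_b$ and $M\succ0$. We will discard $L_Y\ge0$. For $L_H$ we keep only the rank-loss part, $L_H\ge (c-k)_+\phi(\theta)$, from Lemma~\ref{lem:rankloss}. The task then reduces to a scalar inequality in $c$, $k$ and the numbers $r_i$.

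First we fix the partition. Among split partitions we take $C$ to be a \emph{maximal} clique. Then no vertex of $I$ is adjacent to all of $C$, so $s_j\le c-1$ for all $j$, and the natural choice is $\mu=c-1$. With this choice $M=\diag(r_i+1)+J_c$, i.e.\ $d_i=r_i+1$.

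Next we compute $\kappa$ using~\eqref{eq:inverse}:
\[
 \kappa=\sum_i u_i(r_i+1)-\frac{u^{\mathsf T}(BB^{\mathsf T}+I_c)u}{1+\sigma}
 = c-\frac{u^{\mathsf T}(BB^{\mathsf T}+I_c)u}{1+\sigma}.
\]
This is at most $c-\frac{\sum_i u_i^2}{1+\sigma}<c$. The quadratic form term $u^{\mathsf T}BB^{\mathsf T}u\ge0$ can be dropped, or retained if extra slack is needed.

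The main bound then becomes
\[
 F(k):=\binom c2+(k-c)(c-1)+c-\binom{k+1}{2}-(c-k)_+\phi(\theta)-\frac{\|u\|^2}{1+\sigma}.
\]
For $k=c$ this equals $-\|u\|^2/(1+\sigma)<0$. Writing $k=c+j$ with $j\ge1$, the bracket decreases by $j+\binom{j+1}{2}$, so $F<0$. Writing $k=c-j$, the polynomial part increases by $j-\binom j2$. This increase is positive only for $j=1,2$, where it equals $1$. In that case we need
\[
 j\,\phi(\theta)+\frac{\|u\|^2}{1+\sigma}\ge 1 .
\]
When all $d_i\ge2$, i.e.\ every clique vertex has a neighbour in $I$, Lemma~\ref{lem:scalarM} gives $\phi(\theta)\ge 1/(1+\sigma)$. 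Together with $\|u\|^2\ge\sigma^2/c$ and, when needed, the retained term $u^{\mathsf T}BB^{\mathsf T}u$, this should close the case.

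The main obstacle is this small-deficit regime $k\in\{c-1,c-2\}$, together with clique vertices having $r_i=0$ (so $d_i=1$, where Lemma~\ref{lem:scalarM} does not apply). For those we would first try to exploit the freedom in the partition. If some $x\in C$ has no neighbour in $I$, we can move it to $I$ (its degree is then $c-1$, still $\le\mu$ for the new clique size), or re-choose $C$ to maximize $\sum_i r_i$ among maximal cliques. We would also try the alternative shift $\mu=c-2$ when all $s_j\le c-2$, which gives $d_i=r_i+2\ge2$ and slack $c-\kappa\ge\sigma$.

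If these adjustments still leave a gap, we would fall back on the identity~\eqref{eq:roadmap}. It shows $\EE_G(P)\le\tfrac12-r_0-\dots$, so it suffices to exhibit a loss of at least $\tfrac12$ (e.g.\ $r_0\ge1$, or an $\MM_{ij}$ on a clique edge incident to an isolated-from-$I$ vertex) in the remaining configurations. Strictness, which is needed for the equality characterization, would be tracked through the terms $\|u\|^2/(1+\sigma)$ and the rank loss.
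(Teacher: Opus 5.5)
Your argument for a maximal clique with every $r_i\ge1$ is essentially the paper's Case~2 ($\mu=c-1$, with rank loss for $\ell=c-k\in\{1,2\}$). However, your estimate of $\kappa$ is too lossy there. Dropping all of $u^{\mathsf T}BB^{\mathsf T}u$ leaves only $\kappa\le c-\|u\|^2/(1+\sigma)$. Combined with $\phi(\theta)\ge1/(1+\sigma)$, the case $k=c-1$ would then need $\|u\|^2\ge\sigma$, which is false because $u_i\le\frac12$. For $P_4$ ($c=2$, $r_1=r_2=1$) you get $\phi(\theta)+\|u\|^2/(1+\sigma)=\frac12+\frac14<1$, and $\|u\|^2\ge\sigma^2/c$ does not help. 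The fix is to keep the diagonal of $BB^{\mathsf T}$, namely $(BB^{\mathsf T})_{ii}=r_i$, together with nonnegativity of its off-diagonal entries. This gives $u^{\mathsf T}(BB^{\mathsf T}+I_c)u\ge\sum_i(r_i+1)u_i^2=\sigma$, hence $\kappa\le c-\sigma/(1+\sigma)$, which closes $\ell=1$ exactly.

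The genuine gap is the clique vertices with $r_i=0$, which insisting on a maximal clique cannot avoid: $K_n$ has $I=\emptyset$ and all $r_i=0$. For $K_3$ with $C=V(G)$ and $\mu=2$, you have $M=I_3+J_3$, $\theta=1$, $\phi(\theta)=0$ and $\kappa=\tr M^{-1}=9/4$. Your scalar bound then equals $+\frac14$ at both $k=1$ and $k=2$. Since $K_3$ at $k=1$ is an equality case, the true excess vanishes only because the part of $L_H$ you discard is exactly $\frac14$. Also, $C=V(G)$ is the only maximal-clique split partition here, so re-choosing does nothing, and the fallback via \eqref{eq:roadmap} would need the losses to reach exactly $\frac12$, which you do not show.

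Moving $x$ with $r_x=0$ into $I$ is the right step. But then $x$ is adjacent to all of $C'=C\setminus\{x\}$, so you must take $\mu=c'=|C'|$. Now $M=\diag(r_i')+J_{c'}$ only satisfies $M\succeq I_{c'}$ (so $\theta=1$ is possible), and $\tr\bigl((BB^{\mathsf T}+I_{c'})U\bigr)=c'+\sigma$. Neither the rank loss nor dropping $BB^{\mathsf T}$ yields anything in this setting.

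What is missing is the paper's Case~1 estimate $\kappa\le c'$. Write $\kappa-c'=\tr\bigl((BB^{\mathsf T}+I_{c'}-M)M^{-1}\bigr)$. Here $M^{-1}$ has nonpositive off-diagonal entries by \eqref{eq:inverse}. Meanwhile $BB^{\mathsf T}+I_{c'}-M$ has zero diagonal and nonnegative off-diagonal entries, because any two clique vertices share the neighbour $x$. Lemma~\ref{lem:block} then gives $\EE_G(P)\le-(k-c')(k-c'+1)/2\le0$ for every integer $k$, with no rank loss needed. This separate argument, not a refinement of the $\mu=c-1$ analysis, is what handles the non-maximal and $r_i=0$ configurations.
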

\begin{proof}
The assertion is immediate for an edgeless graph. We may therefore
assume that the clique is nonempty. We distinguish whether an
independent vertex is adjacent to the whole clique. The corresponding
degree bounds are $c$ and $c-1$, which determine the choices of $\mu$
in Lemma~\ref{lem:block}.

\medskip\noindent\textbf{Case 1.} Some vertex of $I$ is adjacent to every
vertex of $C$. Then $r_i\ge1$ for all $i$. Choose $\mu=c$, so that
\[
 D\preceq cI_b,\qquad M=\diag(r_i)+J_c\succeq I_c.
\]
We claim that $\kappa\le c$. The matrix $M^{-1}$ has nonpositive
off-diagonal entries by~\eqref{eq:inverse}. On the other hand,
$BB^{\mathsf T}+I_c-M$ has zero diagonal and nonnegative off-diagonal
entries: any two clique vertices share the independent neighbor assumed
in this case. Consequently,
\[
 \kappa-c=\tr\bigl((BB^{\mathsf T}+I_c-M)M^{-1}\bigr)\le0.
\]
Discard $L_H,L_Y$ and the last term in~\eqref{eq:blockidentity}.
Using~\eqref{eq:splitcounts},
\begin{align}
 \EE_G(P)
 &\le\binom c2+(k-c)c+c-\binom{k+1}{2}\notag\\
 &=-\frac{(k-c)(k-c+1)}2\le0.\label{eq:universalclose}
\end{align}
The last inequality uses the integrality of $k-c$.

\medskip\noindent\textbf{Case 2.} No vertex of $I$ is adjacent to every
vertex of $C$. Then $s_j\le c-1$ for all $j$. We may assume every $r_i\ge1$. Indeed, if
some clique vertex $v$ has $r_v=0$, move $v$ from the clique to the
independent set. The latter remains independent, while $v$ is adjacent
to all vertices of the new clique. If the new clique is nonempty, Case~1
applies to this new partition. If it is empty, the graph is edgeless,
which has already been treated.

Choose $\mu=c-1$. Now
\[
 M=\diag(r_i+1)+J_c\succeq2I_c,\qquad D\preceq(c-1)I_b.
\]
Set $u_i=(r_i+1)^{-1}$ and $\sigma=\sum_i u_i$.
The off-diagonal entries of $BB^{\mathsf T}$ are nonnegative, whereas
those of $M^{-1}$ are nonpositive. Thus
\begin{align}
 \kappa
 &\le\sum_i(r_i+1)\left(u_i-\frac{u_i^2}{1+\sigma}\right)\notag\\
 &=c-\frac{\sigma}{1+\sigma}.\label{eq:kappaproper}
\end{align}
For the integer $\ell=c-k$, Lemma~\ref{lem:block} gives
\begin{align}
 \EE_G(P)
 &\le\binom c2+(k-c)(c-1)+\kappa-L_H-L_Y-\binom{k+1}{2}\notag\\
 &=\kappa-c+\frac{\ell(3-\ell)}2-L_H-L_Y.\label{eq:properclose}
\end{align}
If $\ell\le0$ or $\ell\ge3$, the quadratic term is nonpositive,
so~\eqref{eq:kappaproper} proves the result.
If $\ell=2$, Lemma~\ref{lem:rankloss} and $\theta\ge2$ give
$L_H\ge2\phi(2)=1$, exactly cancelling the quadratic term.
Finally, if $\ell=1$, Lemmas~\ref{lem:rankloss} and~\ref{lem:scalarM}
give $L_H\ge1/(1+\sigma)$. Therefore
\[
 \EE_G(P)\le-\frac{\sigma}{1+\sigma}+1
                  -\frac1{1+\sigma}-L_Y=-L_Y\le0.
\]
This covers all integer values of $k$.
\end{proof}

\subsection{Equality cases}\label{subsec:split-equality}
We now determine the equality cases by examining the inequalities
used in the proof of Theorem~\ref{thm:split}.

\begin{theorem}\label{thm:split-equality}
Let $G$ be a split graph with $n$ vertices and $m$ edges, and let
$k\in\{1,\ldots,n\}$. Then
\[
 S_k^+(G)=m+\binom{k+1}{2}
\]
if and only if $k=1$ and deleting all isolated vertices from $G$
leaves a star $K_{1,a}$, $a\ge1$, or a triangle $K_3$.
In particular, the split bound is strict at every rank $k\ge2$.
\end{theorem}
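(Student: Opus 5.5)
We follow the proof of Theorem~\ref{thm:split} and track every inequality. By Lemma~\ref{lem:variational}, equality $S_k^+(G)=m+\binom{k+1}{2}$ holds exactly when some rank-$k$ projection $P$ (the spectral projection onto the top $k$ eigenvectors) has $\EE_G(P)=0$. Isolated vertices change neither $m$ nor the nonzero part of the spectrum, so we may place them in $I$ and ignore them. The \emph{if} direction is a direct computation: $q_1(K_{1,a})=a+1=m+1$ and $q_1(K_3)=4=m+1$.

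For the \emph{only if} direction, assume $\EE_G(P)=0$ and go through the two cases. In Case~1, \eqref{eq:universalclose} forces $(k-c)(k-c+1)=0$, i.e.\ $k\in\{c,c-1\}$. It also forces $\kappa=c$, $L_H=L_Y=0$ and $\tr(W(D-cI_b))=0$.
\begin{itemize}
\item Equality $\kappa=c$ means $(BB^{\mathsf T}+I_c-M)_{ij}(M^{-1})_{ij}=0$ for all $i\ne j$. By \eqref{eq:inverse}, $(M^{-1})_{ij}=-u_iu_j/(1+\sigma)\ne0$. Hence for $c\ge2$ every pair of clique vertices has exactly one common neighbour in $I$, namely $(BB^{\mathsf T})_{ij}=1$.
\item Next, $L_H=0$ gives $H=M^{-1}$, and $L_Y=0$ gives $Y=M^{-1}B$. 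Substituting into $H^2+YY^{\mathsf T}=H$ yields $M^{-1}(I+BB^{\mathsf T})M^{-1}=M^{-1}$, i.e.\ $BB^{\mathsf T}+I=M=\diag(r_i)+J$. This holds automatically once the common neighbour is unique, so it adds no new constraint.
\item The rank of $X=I-M^{-1}$ must be at most $k$. Here $M-I\succeq0$ has rank $c$ or $c-1$ according to whether $\diag(r_i-1)+J$ is singular.
\item Finally, $W$ must be supported on vertices of degree exactly $c$. Since $\tr W=k-c+\tr H$ is positive unless $k=c-1$ and $\tr M^{-1}=1$, this gives a further constraint.
\end{itemize}
The plan is to combine these constraints to show that the only solutions are $c=1$, $k=1$ (a star, with $I$-vertices forming the leaves), and $c=2$ or $c=3$ configurations that reduce to $K_3$ or a star. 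For $c=2$, the clique $\{x,y\}$ with one common neighbour $z$ gives a triangle with pendant vertices, and we must show those pendants are absent. Any pendant forces $W_{jj}>0$ on a vertex of degree $<c$, which contradicts $\tr(W(D-cI))=0$.

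In Case~2, equality in \eqref{eq:properclose} forces equality in \eqref{eq:kappaproper}, which again requires $BB^{\mathsf T}$ to vanish off the diagonal. It also forces the quadratic-term analysis to be tight: $\ell\in\{0,3\}$ with $\kappa=c-\sigma/(1+\sigma)$ impossible since $\sigma>0$, or $\ell\in\{1,2\}$ with the relevant $L_H$ bound tight and $L_Y=0$. For $\ell=1$ we need $\phi(\theta)=1/(1+\sigma)$ exactly. From the proof of Lemma~\ref{lem:scalarM} this requires either $\sigma=1$ with equality in the row-sum bound, or $\sigma\ge1$ with $\theta=2$ and $\sigma=1$. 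Either way $\sigma=1$ and $(\sigma-u_i)(1-2u_i)=0$, which pins down the $r_i$. For $\ell=2$ we need $\theta=2$ on a two-dimensional kernel. Each such subcase should collapse to small explicit graphs, which we then check directly, or which contradict $k\ge1$. We also add a sanity check: for $k\ge2$, the known strict results (the $k=2$ case) are consistent with the conclusion.

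The main obstacle is Case~1 with $c\ge3$. Pairs of clique vertices sharing exactly one independent neighbour is a genuine combinatorial family (for instance each pair having its own private common neighbour), so ruling these out requires the rank condition on $X$ and the support condition on $W$ simultaneously. I would first try to compute $\tr H=\tr M^{-1}$ explicitly via \eqref{eq:inverse}, namely $\tr M^{-1}=\sigma-\sum u_i^2/(1+\sigma)$, and show that $\tr W=k-c+\tr H$ is inconsistent with $W$ living only on the degree-$c$ vertices of $I$. If that fails, the fallback is to verify directly that $q_1<m+1$ for $k=1$ and use the $k\ge2$ Case~1 bound with the integrality slack.
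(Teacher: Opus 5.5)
Your overall strategy is the paper's, and the Case~1 conditions you extract ($k\in\{c-1,c\}$, $\kappa=c$, $L_H=L_Y=0$, $\tr(W(cI_b-D))=0$) are correct. But the argument stops exactly where the work lies: Case~1 with $c\ge3$ is left as ``the main obstacle'', with only tentative plans. Your fallback cannot work, because $-(k-c)(k-c+1)/2$ has no slack precisely at $k\in\{c-1,c\}$. The trace and rank constraints alone also do not exclude pendant vertices, since pendants make $\tr M^{-1}$ small.

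The missing step is the one you assert without proof for $c=2$, and it works for every $c$. Since $P=P^{\mathsf T}P$, we have $P_{jj}=\sum_h P_{hj}^2$. So the vanishing of $\sum_j(c-s_j)W_{jj}$ kills the entire column of $P$ at each independent vertex with $s_j<c$. In particular $Y_{\cdot j}=0$, and then $Y=M^{-1}B$ with $M$ invertible gives $B_{\cdot j}=MY_{\cdot j}=0$. Hence every independent vertex is adjacent to all of $C$ or is isolated, and $G$ minus its isolated vertices is $K_c\vee\overline{K_a}$.

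Your proposed obstruction family, in which each pair of clique vertices has its own private common neighbour, does not occur. In Case~1 some $u\in I$ is already a common neighbour of every clique pair, so uniqueness of common neighbours only leaves room for independent vertices with a single clique neighbour.

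For $c\ge2$, the condition $I_c+BB^{\mathsf T}=M$ becomes $(a-1)(J_c-I_c)=0$, so $a=1$. At that point your trace idea does close the argument. $W$ is supported on $u$ alone, so $k-\frac{c}{c+1}=k-\tr X=\tr W=P_{uu}\le1$. This forces $k=1$ and then $c=2$, i.e.\ $K_3$. For $c=1$ you get a star with $k=1$. The paper reaches the same conclusion by computing $P|_{C\cup\{u\}}=J_{c+1}/(c+1)$ explicitly.

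Case~2 is also unfinished. The subcase $\ell=2$ needs no enumeration of graphs: $\kappa-c\le-\sigma/(1+\sigma)$ and $L_H\ge1$ give $\EE_G(P)\le-\sigma/(1+\sigma)<0$.

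For $\ell=1$, your route through equality in Lemma~\ref{lem:scalarM} is viable. It leads to $c=2$ and $r_1=r_2=1$, i.e.\ $P_4$, where $q_1=2+\sqrt2<4$. However, tightness of the row-sum bound only forces $(\sigma-u_i)(1-2u_i)=0$ for some $i$, not for every $i$. To pin down the $r_i$ you need the kernel argument of Proposition~\ref{prop:scalar-equality}.

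The paper avoids this with a rank argument. Equality forces $\eta=0$, so $BB^{\mathsf T}=\diag(r_i)\succ0$ and $\rank B=c$. Then $L_Y=0$ gives $\rank Y=\rank(M^{-1}B)=c$. This contradicts $\rank Y\le\rank X\le k=c-1$, which follows from $X^2+YY^{\mathsf T}=X$.
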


\begin{proof}
By Lemma~\ref{lem:variational}, the spectral equality holds precisely
when there is a rank-$k$ orthogonal projection $P$ with
$\EE_G(P)=0$. An edgeless graph has
$\EE_G(P)=-\binom{k+1}{2}<0$, so we assume that $G$ has an edge.
We use the block notation of~\eqref{eq:Qsplit}--\eqref{eq:LHLY}.

For $D\preceq\mu I_b$, write
\begin{equation}\label{eq:degree-loss}
 Z_\mu:=\tr\bigl(W(\mu I_b-D)\bigr)
       =\sum_j(\mu-s_j)W_{jj}\ge0.
\end{equation}
Then Lemma~\ref{lem:block} gives
\begin{equation}\label{eq:split-exact-excess}
 \EE_G(P)=\binom c2+(k-c)\mu+\kappa-\binom{k+1}{2}
             -L_H-L_Y-Z_\mu.
\end{equation}
The square terms satisfy
\begin{equation}\label{eq:equality-zero-squares}
 L_H=0\ \Longleftrightarrow\ H=M^{-1},\qquad
 L_Y=0\ \Longleftrightarrow\ Y=M^{-1}B.
\end{equation}
If both vanish, substituting in $H^2+YY^{\mathsf T}=H$ gives
\begin{equation}\label{eq:equality-compatibility}
 I_c+BB^{\mathsf T}=M.
\end{equation}

Also, a zero diagonal entry of $P$ forces its entire row and column to
vanish, since $P_{jj}=\sum_h P_{hj}^2$. Thus $Z_\mu=0$ forces $P$
to vanish on every independent vertex with $s_j<\mu$.
Finally, $X^2+YY^{\mathsf T}=X$ implies
$\Ker X\subseteq\Ker Y^{\mathsf T}$, and hence
\begin{equation}\label{eq:equality-rank-compatibility}
 \rank Y\le\rank X\le\rank P=k.
\end{equation}
In Case~1 of Theorem~\ref{thm:split}, $\mu=c$ and
\eqref{eq:split-exact-excess} reads
\[
 \EE_G(P)=-\frac{(k-c)(k-c+1)}2
              -(c-\kappa)-L_H-L_Y-Z_c.
\]
Every subtracted term is nonnegative. Equality therefore implies
\begin{equation}\label{eq:equality-universal-conditions}
 k\in\{c-1,c\},\qquad \kappa=c,\qquad L_H=L_Y=Z_c=0.
\end{equation}

If $s_j<c$, the degree loss gives $Y_{\cdot j}=0$, whereas $L_Y=0$
gives $Y=M^{-1}B$. Thus $B_{\cdot j}=0$. Every column of $B$ is
consequently either zero or $\one$. Let $a\ge1$ be the number of
nonzero columns. After deleting isolated vertices, $G$ is therefore
the join of a $c$-clique and an independent set of size $a$. Then
\[
 r_i=a\quad\text{for all }i,\qquad BB^{\mathsf T}=aJ_c,
 \qquad M=aI_c+J_c.
\]
The compatibility condition~\eqref{eq:equality-compatibility} becomes
\begin{equation}\label{eq:equality-universal-rigidity}
 I_c+aJ_c=aI_c+J_c,
 \qquad\text{equivalently}\qquad (a-1)(J_c-I_c)=0.
\end{equation}

If $c=1$, the graph is a nontrivial star with possible isolated vertices.
The condition $k\in\{c-1,c\}$ and $k\ge1$ gives $k=1$.

Suppose $c\ge2$. Equation~\eqref{eq:equality-universal-rigidity}
forces $a=1$. Let $u$ be the unique non-isolated independent vertex.
Then
\[
 M=I_c+J_c,\qquad M^{-1}=I_c-\frac{J_c}{c+1},\qquad
 X=\frac{J_c}{c+1},\qquad Y_{\cdot u}=\frac{\one}{c+1}.
\]
All rows and columns indexed by isolated vertices vanish. Writing
$w=P_{uu}$, the $u$-column of $XY+YW=Y$ gives
\[
 \frac{c}{c+1}Y_{\cdot u}+wY_{\cdot u}=Y_{\cdot u},
 \qquad\text{so}\qquad w=\frac1{c+1}.
\]
It follows that
\[
 P\big|_{C\cup\{u\}}=\frac{J_{c+1}}{c+1},
\]
a rank-one projection. Thus $k=1$, and
$1\in\{c-1,c\}$ with $c\ge2$ forces $c=2$.
The non-isolated part of $G$ is therefore a triangle.

In Case~2, a vertex with $r_i=0$ may be moved as in the preceding
proof, reducing to Case~1. We are left with $r_i\ge1$ and
$s_j\le c-1$. Choose $\mu=c-1$, and set
\[
 M=\diag(r_i+1)+J_c\succeq2I_c,\qquad
 u_i=(r_i+1)^{-1},\qquad \sigma=\sum_i u_i>0,\qquad \ell=c-k.
\]
Put
\[
 \eta:=\frac{2}{1+\sigma}
              \sum_{i<h}u_i u_h(BB^{\mathsf T})_{ih}\ge0.
\]
Keeping the off-diagonal terms in~\eqref{eq:inverse} gives
\[
 \kappa=c-\frac{\sigma}{1+\sigma}-\eta.
\]
Consequently,
\begin{equation}\label{eq:equality-proper-defects}
 \EE_G(P)=-\frac{\sigma}{1+\sigma}
            +\frac{\ell(3-\ell)}2-\eta-L_H-L_Y-Z_{c-1}.
\end{equation}

For $\ell\le0$ or $\ell\ge3$, this is strictly negative.
For $\ell=2$, the bound $L_H\ge1$ again makes it strictly negative.
When $\ell=1$, rewrite~\eqref{eq:equality-proper-defects} as
\[
 \EE_G(P)=-\eta-\left(L_H-\frac1{1+\sigma}\right)-L_Y-Z_{c-1}.
\]
All four terms subtracted on the right are nonnegative by
Lemmas~\ref{lem:rankloss} and~\ref{lem:scalarM}.
Equality would force $\eta=0$ and $L_Y=0$. Since every $u_i>0$,
the first condition gives
\[
 BB^{\mathsf T}=\diag(r_i)\succ0,\qquad \rank B=c.
\]
The second gives $Y=M^{-1}B$, and hence $\rank Y=c$. This contradicts
\eqref{eq:equality-rank-compatibility}, because $k=c-1$.
There is no equality in Case~2.

Conversely, the stated graphs attain equality. The signless
Laplacian spectrum of $K_{1,a}$ is $a+1,1^{[a-1]},0$: leaf vectors
of coordinate sum zero have eigenvalue one, and the remaining two
eigenvalues are those of $\left(\begin{smallmatrix}a&a\\1&1\end{smallmatrix}\right)$.
Here brackets indicate multiplicity. Thus $q_1=a+1=m+1$.
For $K_3$, the matrix $Q=I_3+J_3$ has eigenvalues $4,1,1$,
so again $q_1=4=m+1$.
Isolated vertices add only zero eigenvalues and no edges.
This proves sufficiency.
\end{proof}

\begin{remark}\label{rem:equality-projections}
For each equality graph in Theorem~\ref{thm:split-equality}, the rank-one
orthogonal projection maximizing $\tr(PQ(G))$ is unique.
For a star with its center listed first, let
$\mathbf v=(a,1,\ldots,1)^{\mathsf T}$, extended by zeros on isolated
vertices. The corresponding maximizing projections are
\[
 P=\frac{\mathbf v\mathbf v^{\mathsf T}}{a(a+1)}
 \quad\text{for }K_{1,a}\sqcup tK_1,
 \qquad
 P=\frac{J_3}{3}\oplus0
 \quad\text{for }K_3\sqcup tK_1.
\]
In both cases, the largest eigenvalue has multiplicity one. Every
maximizing rank-one projection is the orthogonal projection onto the
corresponding eigenspace, so the maximizing projection is unique.
\end{remark}

\section{Conjecture for non-split graphs}\label{sec:obstructions}

In this section, we prove that the inequality in
Conjecture~\ref{conj:signless} is strict for every non-split graph.

For a graph $H$ with vertex set $\{1,\ldots,d\}$, define
\begin{align}
 \CC_H(p_1,\ldots,p_d)
 &=\sum_i v(p_i)+\sum_{ij\in E(H)}\ell_E(p_i,p_j)
   +\sum_{ij\notin E(H)}\ell_N(p_i,p_j),\label{eq:CH}\\
 \widetilde\CC_H(p_1,\ldots,p_d)
 &=\sum_i v(p_i)+\sum_{ij\in E(H)}E(p_i,p_j)
   +\sum_{ij\notin E(H)}N(p_i,p_j).\label{eq:CH-strong}
\end{align}
where $(p_1,\ldots,p_d)\in[0,1]^d$. Here $v$ is defined
in~\eqref{eq:v}, and $\ell_E,\ell_N,E,N$ are defined
in~\eqref{eq:scalar}--\eqref{eq:strong-scalar}.

By~\eqref{eq:roadmap}, the excess $\EE_G(P)$ is $1/2$ minus a sum
of nonnegative terms. We obtain a lower bound for this sum by
retaining only the vertex and pair terms associated with an induced
subgraph and applying Lemma~\ref{lem:scalar}.
By Lemma~\ref{lem:split-characterization}, every non-split graph
contains an induced $2K_2$, $C_4$, or $C_5$.
We show that the corresponding local lower bounds, $\CC_{2K_2}$,
$\widetilde\CC_{C_4}$, and $\CC_{C_5}$, each exceed $1/2$.
It follows that $\EE_G(P)<0$ for
every rank-$k$ orthogonal projection $P$.

We establish the following local estimates.

\begin{theorem}\label{thm:local}
The following inequalities hold:
\[
 \CC_{2K_2}>\frac12,\qquad
 \CC_{C_5}\ge\frac7{12},\qquad
 \widetilde\CC_{C_4}\ge\frac{101}{200}.
\]
\end{theorem}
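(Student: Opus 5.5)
These are three finite-dimensional minimization problems over $[0,1]^d$ with $d\in\{4,5\}$. Each function is continuous on a compact set, so each minimum is attained. I would first reduce each problem by symmetry and monotonicity, and then bound the reduced problem with elementary one- and two-variable estimates.

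The basic observation is that $v(p)=\tfrac12p(1-p)^2(2-p)$ vanishes only at $p\in\{0,1\}$. Hence a small total cost forces every coordinate to be near $0$ or $1$. I would therefore split each problem into two cases: some coordinate lies in a middle interval $[\delta,1-\delta]$, which already contributes $v\ge v(\delta)$ or $v(1-\delta)$, or every coordinate is within $\delta$ of $\{0,1\}$. In the second case the pattern of near-$0$ and near-$1$ entries is essentially a subset $U$ of vertices, and the pair terms behave like Remark~\ref{ex:coordinate}:
\begin{itemize}
\item an edge with both ends near $0$ costs $\ell_E\approx1$;
\item a nonedge with both ends near $1$ costs $\ell_N\approx\tfrac14\cdot(6-2)=1$.
\end{itemize}

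\textbf{$2K_2$ and $C_5$.} Both graphs are self-complementary up to relabeling, so every choice of $U$ contains either a nonedge inside $U$ or an edge outside $U$. This is exactly the non-split property, and it gives a cost near $1$ at the corners. The remaining work is to make this quantitative along the transition: as an entry moves from $0$ to $1$, the $v$ term must be traded against the pair terms. For $2K_2$ the structure is especially clean. Let the edges be $12$ and $34$. I would write
\[
\CC_{2K_2}=\sum_i v(p_i)+\ell_E(p_1,p_2)+\ell_E(p_3,p_4)+\sum_{\text{4 nonedges}}\ell_N ,
\]
then use monotonicity of $\ell_E$ (decreasing in each variable) and of $\ell_N$ (increasing in each variable). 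This lets me assume an ordering of the coordinates and reduce to a problem in one or two parameters. I would show that $\min(\ell_E(p,q)+\text{nonedge terms})$ plus the $v$ terms stays above $1/2$ via explicit polynomial inequalities on a grid of cases. For $C_5$, the target $7/12$ suggests the minimizer is symmetric or nearly so. I would use cyclic symmetry and the convexity-type behavior of the pair terms to reduce to configurations in which the large coordinates form an independent pair, and then verify the bound by one-variable calculus on each reduced branch.

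\textbf{$C_4$.} This is the main obstacle. With the weak bounds $\ell_E,\ell_N$ the corner $p=(1,0,1,0)$ (a diagonal nonedge with both ends near $1$, and both ends of an edge... ) is too close to $1/2$. This is why the stronger $E,N$ are used: on the nonedge diagonal pair, $N(1,1)=1$. The dangerous configurations are $p_1=p_3=1$, $p_2,p_4$ free, which costs $N(1,1)=1$ outright, and the balanced interior ones such as all $p_i=t$. For the latter I would compute
\[
\widetilde\CC=4v(t)+4E(t,t)+2N(t,t)
\]
and minimize this one-variable function, expecting its minimum to be near $0.505$. For the general case I would show that the minimum over $[0,1]^4$ is attained at a configuration symmetric under the dihedral group of $C_4$, or else reduce to two free parameters $(p_1=p_3,\ p_2=p_4)$. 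Then I would certify $\ge101/200$ by a careful interval or grid estimate with Lipschitz bounds on the two-variable function. The margin $1/200$ is thin, so I expect this certification, and justifying the symmetrization, to be the hardest part.
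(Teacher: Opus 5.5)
Your proposal is a search strategy rather than a proof, and the steps meant to make it work either fail or are unsupported.

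\textbf{The middle/corner split.} The dichotomy cannot close any case. With $u=1-p$ one has $v(p)=\tfrac12u^2(1-u^2)\le\tfrac18$, so a coordinate in $[\delta,1-\delta]$ contributes at most $1/8$. The whole difficulty therefore lies in the transition regime that you leave as ``remaining work''.

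\textbf{The dimension reductions.} Nothing in the proposal justifies them. $\ell_E$ decreases and $\ell_N$ increases in each variable, while $v$ is not monotone, so monotonicity gives no direction in which to push coordinates. Automorphisms only let you order the coordinates; they do not remove any. Nothing is convex either: $v$ vanishes at both endpoints, and the pair terms involve $(\cdot)_+$ and $\sqrt{pq}$. So these symmetric functions need not be minimized at symmetric points. In fact the dihedral reduction for $C_4$ is false. Along uniform points, $\widetilde\CC_{C_4}$ bottoms out near $0.6316$ (at $t\approx0.376$), while $(0.58,0.17,0.58,0.17)$ gives about $0.6274$.

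\textbf{Calibration errors.} The constants $7/12$ and $101/200$ are outputs of a summation argument, not minimum values. Uniform points give roughly $0.92$ for $\CC_{C_5}$ and $0.63$ for $\widetilde\CC_{C_4}$, not $0.505$, so these constants say nothing about where the minimizers are. The corner $(1,0,1,0)$ already costs $\ell_N(1,1)=1$ under the weak bounds, so it is not the obstruction. Also, $2K_2$ is not self-complementary; its complement is $C_4$.

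\textbf{What is missing outright.} The strict inequality for $2K_2$ is never addressed. What remains is a certified grid search over the full $4$- and $5$-dimensional cubes, which you do not carry out. It would also need care near $pq=0$, where $\sqrt{pq}$ is not Lipschitz.

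The missing idea is to avoid global minimization altogether. Distribute each vertex term over the pairs containing that vertex, and prove \emph{two-variable} inequalities whose right-hand sides are products in a transformed variable. Summing over pairs then leaves a function whose minimum on the cube is elementary.

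For $2K_2$ the paper takes $z_0(p)=p^{3/2}(6-p)/5$ and proves
\[
\ell_N(p,q)\ge z_0(p)z_0(q),\qquad v(p)+v(q)+\ell_E(p,q)\ge\tfrac12(1-z_0(p)-z_0(q))_+^2 .
\]
This gives $\CC_{2K_2}\ge\tfrac12(1-Z)_+^2+\tfrac12(1-W)_+^2+ZW\ge\tfrac12$. Strictness follows because, for $pq>0$, the product bound is an equality only at $p=q=1$.

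For $C_5$, with $f(p)=p(2-p)$, the paper proves
\[
4\ell_E+v(p)+v(q)\ge\tfrac76(1-f(p))(1-f(q)),\qquad 4\ell_N+v(p)+v(q)\ge\tfrac76f(p)f(q).
\]
These sum to $4\CC_{C_5}\ge\tfrac76\bigl(5-2\sum_if_i+\sum_{i<j}f_if_j\bigr)$. The right side is multi-affine, so its minimum $2$ is read off at the vertices of the cube.

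For $C_4$, the paper uses corrections $h,\gamma$ with $2h+\gamma=\tfrac45w$, where $w=2v=f-f^2$. It proves pair bounds against $\tfrac35$ times the same products. These yield $\widetilde\CC_{C_4}\ge\tfrac3{10}(S-\tfrac52)^2+\tfrac{101}{200}$ with $S=\sum_if(p_i)$.
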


We prove the three estimates separately. The two-variable polynomial
identities used below are proved in the appendices, where all coefficients
are given explicitly.

\subsection{The \texorpdfstring{$2K_2$}{2K2} case}

The function below serves two purposes: it gives a product lower bound
for each nonedge loss, and bounds each edge loss together with its two
vertex terms using the sum of the endpoint values. The four nonedge
products then reduce the estimate to two variables. Put
\begin{equation}\label{eq:z0}
 z_0(p)=\frac{p^{3/2}(6-p)}5,\qquad 0\le p\le1.
\end{equation}

\begin{lemma}\label{lem:2k2-pair}
For $p,q\in[0,1]$,
\begin{align}
 \ell_N(p,q)&\ge z_0(p)z_0(q),\label{eq:2k2-nonedge}\\
 v(p)+v(q)+\ell_E(p,q)&\ge\frac12\pos{1-z_0(p)-z_0(q)}^2.
 \label{eq:2k2-edge}
\end{align}
If $pq>0$, equality in~\eqref{eq:2k2-nonedge} is possible only when $p=q=1$.
\end{lemma}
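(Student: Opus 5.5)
The plan is to prove the two inequalities separately. The nonedge bound~\eqref{eq:2k2-nonedge} should reduce to a one-variable inequality via AM--GM. The edge bound~\eqref{eq:2k2-edge} should reduce to a two-variable polynomial inequality in square-root variables.

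\emph{Nonedge bound.} If $pq=0$ both sides of~\eqref{eq:2k2-nonedge} vanish, so assume $pq>0$. Substitute $p=x^2$, $q=y^2$ with $x,y\in(0,1]$ and divide by $pq=x^2y^2$. Put $g(t)=t^2(3-t^2)$ and $h(t)=t(6-t^2)$. The inequality then reads
\[
 \frac{g(x)+g(y)}4\ \ge\ \frac{h(x)h(y)}{25}.
\]
By AM--GM, $h(x)h(y)\le\frac12\bigl(h(x)^2+h(y)^2\bigr)$. It therefore suffices to prove the one-variable inequality $g(t)/4\ge h(t)^2/50$ for $t\in(0,1]$. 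Dividing by $t^2$ and writing $u=t^2$, this becomes $25(3-u)\ge2(6-u)^2$, that is $(1-u)(3+2u)\ge0$, which holds for $u\in(0,1]$. Equality in the one-variable step forces $u=1$ for both variables, so $p=q=1$. This also gives the stated equality clause.

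\emph{Edge bound.} If $z_0(p)+z_0(q)\ge1$, the right-hand side of~\eqref{eq:2k2-edge} is $0$ and there is nothing to prove. So assume $A:=1-z_0(p)-z_0(q)>0$. Again use $p=x^2$, $q=y^2$, so that $\sqrt{pq}=xy$ and every quantity is polynomial in $x,y$. Write $e=\bigl((1-x^2)(1-y^2)-xy\bigr)_+$, so $\ell_E=e^2$. I plan to split $A$ as
\[
 A=e+\alpha(x)+\alpha(y)+\rho(x,y),
\]
where $\alpha$ is chosen so that $v(x^2)\ge\lambda\,\alpha(x)^2$. Cauchy--Schwarz with weights $w_0,w_1,w_1$ summing to one would then give
\[
 \tfrac12A^2\le\tfrac12\Bigl(\frac{e^2}{w_0}+\frac{2\,\alpha(\cdot)^2}{w_1}\Bigr)
\]
in suitably symmetric form, provided the remainder $\rho\le0$. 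The first attempt would take $\alpha(x)=\frac12\bigl(x^2-z_0(x^2)\bigr)$, which is the piece of $A-e$ that depends on one variable only. For this choice the cross term $\rho$ comes out as
\[
 \rho=xy-x^2y^2 \quad\text{corrected by the split.}
\]
If this clean decomposition fails, I will fall back on direct analysis. I will treat $F(x,y)=v(x^2)+v(y^2)+e^2-\frac12A^2$ as a polynomial on the region where $e>0$ and $A>0$. Separately, on the region where $e=0$ but $A>0$, I will check $v(x^2)+v(y^2)\ge\frac12A^2$, which only involves one-variable terms and the constraint. On each region I would first verify the diagonal $x=y$ and the boundary segments $x=0$ and $y=0$. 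I would then certify nonnegativity through a sum-of-squares or Positivstellensatz-type decomposition using the constraints $x,y\in[0,1]$, with explicit coefficients in the style of the appendices the paper mentions.

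The main obstacle is this edge inequality. Near the corner $p=q=0$, both sides tend to $\frac12$, because $v(p)\approx p$ and $\ell_E\approx(1-p-q-\sqrt{pq})^2$. So the inequality is tight to first order there, and the decomposition must match the linear terms $p+q+\sqrt{pq}$ exactly. The $\sqrt{pq}$ term in $\ell_E$ is absent from $z_0$, and it must be absorbed by the surplus $p-\frac{2}{5}p^{3/2}\cdot3$ coming from $v$. Checking that expansion at the origin is the first thing I would do, before attempting the global certificate.
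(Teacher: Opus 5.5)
Your nonedge argument for \eqref{eq:2k2-nonedge} is correct, including the equality clause. It is the paper's AM--GM argument applied on the other side, and your one-variable inequality $(1-u)(3+2u)\ge0$ is exactly the paper's comparison $z_0(p)\le p\sqrt{(3p-p^2)/2}$.

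The gap is the edge bound \eqref{eq:2k2-edge}, which is the real content of the lemma; for it you give only a plan. Write $g=(1-x^2)(1-y^2)-xy$.
\begin{itemize}
\item The concrete decomposition fails. On the region $g>0$ your remainder is $\rho=\frac12(p-z_0(p))+\frac12(q-z_0(q))+xy(1-xy)$. Since $z_0(p)\le p$, this is positive whenever $0<x,y<1$. So $\rho\le0$ fails exactly where $\ell_E$ is active.
\item In fact no choice of $\alpha$ rescues the template with constant weights. At $p=q=0$ you have $e=1$ and $\alpha(0)=0$ (because $v(0)=0$), which forces $w_0\ge\frac12$ and hence $w_1\le\frac14$. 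On the curve $g=0$, where $e=0$, the conditions $\rho\le0$ and $\frac12\bigl(e^2/w_0+(\alpha(x)^2+\alpha(y)^2)/w_1\bigr)\le e^2+v(p)+v(q)$ then force $A^2\le(\alpha(x)+\alpha(y))^2\le2(\alpha(x)^2+\alpha(y)^2)\le v(p)+v(q)$. But along that curve near $(p,q)=(0,1)$ one has $1-q\sim\sqrt p$. Hence $A^2\sim1.69\,p$, while $v(p)+v(q)\sim\frac32p$.
\item Your fallback (``an SOS or Positivstellensatz certificate'') names a technique without supplying a certificate, so nothing is proved there.
\item Your diagnosis of the difficulty is wrong. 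At $p=q=0$ the left side is $\ell_E(0,0)=1$ against $\frac12$ on the right, so there is ample slack. The inequality is tight instead at points such as $(p,q)=(1,0)$, where both sides vanish.
\end{itemize}

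Two things are missing. First, you need a mechanism for the region $g\le0$. There $\ell_E=0$, and $v(p)+v(q)\ge\frac12(1-z_0(p)-z_0(q))_+^2$ holds only because of the coupling constraint $\sqrt{pq}\ge(1-p)(1-q)$; without it the inequality fails at $p=q=0$. Your proposal does not say how the constraint enters. The paper makes the constraint additive with $T(p)=\sqrt p/(1-p+\sqrt p)$, for which
\[
 T(p)+T(q)-1=\frac{\sqrt{pq}-(1-p)(1-q)}{(1-p+\sqrt p)(1-q+\sqrt q)}.
\]
Thus $g\le0$ means $T(p)+T(q)\ge1$. The one-variable bound $z_0(p)+\sqrt{v(p)}\ge T(p)$ then gives $(1-z_0(p)-z_0(q))_+\le\sqrt{v(p)}+\sqrt{v(q)}$, and $(a+b)^2\le2a^2+2b^2$ finishes.

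Second, for $g\ge0$ you need an actual certificate. The paper proves $2v(p)+2v(q)+2g^2\ge(1-z_0(p)-z_0(q))^2$ on the whole square, with $g^2$ in place of $g_+^2$, so no region-restricted certificate is required. It subtracts the explicit square $\frac14(x+y-1)^2(x-y)^2(4+3xy)^2$ and checks that the remainder has nonnegative Bernstein coefficients in $x$ and $y$.
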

\begin{proof}
Let $h_0(p)=3p-p^2$ and $z_*(p)=p\sqrt{h_0(p)/2}$.
The arithmetic--geometric mean inequality gives
\[
 \ell_N(p,q)=\frac{pq}{4}\bigl(h_0(p)+h_0(q)\bigr)
 \ge\frac{pq}{2}\sqrt{h_0(p)h_0(q)}=z_*(p)z_*(q).
\]
For $t=1-p$, we have
\[
 z_0(p)=p^{3/2}(1+t/5),\qquad
 z_*(p)=p^{3/2}\sqrt{1+t/2},
\]
and
\[
 1+\frac t2-\left(1+\frac t5\right)^2
 =\frac{t(5-2t)}{50}\ge0.
\]
Thus $z_0\le z_*$, with strict inequality for $0<p<1$.
This proves~\eqref{eq:2k2-nonedge} and its equality assertion.

For~\eqref{eq:2k2-edge}, write $x=\sqrt p$, $y=\sqrt q$, and
\[
 g=(1-x^2)(1-y^2)-xy,\qquad D=1-z_0(p)-z_0(q).
\]
Suppose first that $g\le0$, so that $\ell_E(p,q)=0$. Define
\[
 T(p)=\frac{\sqrt p}{1-p+\sqrt p}.
\]
This function converts the condition $g\le0$ into an additive bound:
\[
 T(p)+T(q)-1
 =\frac{xy-(1-p)(1-q)}{(1-p+x)(1-q+y)}\ge0.
\]
It therefore suffices to prove $z_0(p)+\sqrt{v(p)}\ge T(p)$.
At $p=0$ this is immediate. For $p>0$, using
$\sqrt{1-p/2}\ge1-p/2$ gives
\[
 \frac{z_0(p)+\sqrt{v(p)}}{x}
 =\frac{p(6-p)}5+(1-p)\sqrt{1-p/2}
 \ge1-\frac3{10}p(1-p).
\]
Also,
\begin{align*}
 &\left(1-\frac3{10}x^2(1-x^2)\right)(1+x-x^2)-1\\
 &\qquad=x(1-x)\left(1-\frac3{10}x(1+x)(1+x-x^2)\right)\ge0,
\end{align*}
since $x\le1$, $1+x\le2$, and $1+x-x^2\le5/4$.
This proves the one-variable bound.
It follows that $D_+\le\sqrt{v(p)}+\sqrt{v(q)}$, and hence
$D_+^2\le2v(p)+2v(q)$, as required.

If $g\ge0$, then $\ell_E(p,q)=g^2$.
Lemma~\ref{lem:app-2k2} proves that the polynomial
\[
 Q_2(x,y)=2v(x^2)+2v(y^2)+2g^2
 -\left(1-\frac{x^3(6-x^2)}5-\frac{y^3(6-y^2)}5\right)^2
\]
is nonnegative on $[0,1]^2$. Therefore
$2v(p)+2v(q)+2\ell_E(p,q)\ge D^2\ge D_+^2$.
\end{proof}

\begin{proposition}\label{prop:2k2}
For every $(p_1,p_2,p_3,p_4)\in[0,1]^4$,
$\CC_{2K_2}(p_1,p_2,p_3,p_4)>1/2$.
\end{proposition}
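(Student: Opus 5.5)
Label $2K_2$ with edges $12$ and $34$; the four nonedges are $13,14,23,24$. Write $z_i=z_0(p_i)$. By~\eqref{eq:CH},
\[
 \CC_{2K_2}=\bigl[v(p_1)+v(p_2)+\ell_E(p_1,p_2)\bigr]+\bigl[v(p_3)+v(p_4)+\ell_E(p_3,p_4)\bigr]+\sum_{i\in\{1,2\},\,j\in\{3,4\}}\ell_N(p_i,p_j).
\]
Lemma~\ref{lem:2k2-pair} bounds each edge bracket below by $\tfrac12(1-z_1-z_2)_+^2$ and $\tfrac12(1-z_3-z_4)_+^2$. The four nonedge terms are bounded below by $\sum z_iz_j=(z_1+z_2)(z_3+z_4)$. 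Setting $s=z_1+z_2$ and $t=z_3+z_4$, both lying in $[0,2]$ since $z_0(1)=1$, we obtain
\[
 \CC_{2K_2}\ge F(s,t):=\tfrac12(1-s)_+^2+\tfrac12(1-t)_+^2+st,
\]
and it suffices to show $F\ge\frac12$, then handle strictness.

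\textbf{Minimizing $F$.} If $s,t\le1$, then
\[
 F-\tfrac12=\tfrac12\bigl[(1-s)^2+(1-t)^2-1\bigr]+st=\tfrac12\bigl[(1-s-t)^2+2st\bigr]\ge0 .
\]
Indeed, $(1-s)^2+(1-t)^2-1+2st=(1-s-t)^2$, so in this region $F-\tfrac12=\tfrac12(1-s-t)^2$.

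If $s\ge1\ge t$, then $F=\tfrac12(1-t)^2+st\ge\tfrac12(1-t)^2+t=\tfrac12+\tfrac12t^2\ge\tfrac12$. The case $t\ge1\ge s$ is symmetric. If $s,t\ge1$, then $F=st\ge1$.

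Hence $F\ge\tfrac12$, with equality exactly when $s+t=1$ and $s,t\in[0,1]$, or when $t=0$ and $s\ge1$ (or symmetrically).

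\textbf{Strictness.} This is the main obstacle: at an equality point of $F$, every inequality in the chain must be tight, and we must rule that out.

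First consider the case $t=0$ (or symmetrically $s=0$). Then $p_3=p_4=0$, and the nonedge terms vanish. The edge bracket $v(p_3)+v(p_4)+\ell_E(0,0)$ equals $\ell_E(0,0)=1$, which is strictly larger than $\tfrac12(1-0)^2=\tfrac12$. So $\CC_{2K_2}\ge1$.

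Now suppose $s,t>0$ with $s+t=1$, so at least one of $z_1,z_2$ and one of $z_3,z_4$ is positive. Take, say, $p_1,p_3>0$. Since $s,t<1$, we have $p_1,p_3<1$. The equality clause of Lemma~\ref{lem:2k2-pair} then gives $\ell_N(p_1,p_3)>z_1z_3$ strictly. Hence $\CC_{2K_2}>F(s,t)=\tfrac12$.

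More directly, whenever some $p_i\in\{1\}$ we have $s\ge1$ or $t\ge1$, and the bound $F\ge\tfrac12+\tfrac12t^2$ together with the $t=0$ analysis already gives strictness. So the only tight configurations are excluded, and $\CC_{2K_2}>\tfrac12$ everywhere on $[0,1]^4$. Continuity on the compact cube is not even needed, since strictness holds pointwise.
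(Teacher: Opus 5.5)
Your proof is correct and follows the paper's argument. You use the same reduction via Lemma~\ref{lem:2k2-pair} to $F(s,t)=\tfrac12(1-s)_+^2+\tfrac12(1-t)_+^2+st$ and the same three-region minimization. Strictness comes, as in the paper, from $\ell_E(0,0)=1$ when $s$ or $t$ vanishes and from the equality clause of \eqref{eq:2k2-nonedge} otherwise. You locate the tight points of $F$ explicitly, while the paper argues that tightness of all four cross-pair bounds forces $ZW\ge1$; this is the same idea. One small slip: in the case $s,t\le1$, the displayed bracket should be $(1-s-t)^2-2st$ rather than $(1-s-t)^2+2st$, which your next sentence already corrects to $F-\tfrac12=\tfrac12(1-s-t)^2$.
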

\begin{proof}
Take $E(2K_2)=\{12,34\}$, and put
$Z=z_0(p_1)+z_0(p_2)$ and $W=z_0(p_3)+z_0(p_4)$.
By Lemma~\ref{lem:2k2-pair},
\begin{equation}\label{eq:2k2-F}
 \CC_{2K_2}\ge F(Z,W):=\frac12(1-Z)_+^2+\frac12(1-W)_+^2+ZW.
\end{equation}
For $0\le Z,W\le1$, the right-hand side equals
$1/2+(Z+W-1)^2/2$. If $Z\ge1\ge W$, it is at least
$(1-W)^2/2+W=1/2+W^2/2$; the symmetric case is the same.
If $Z,W\ge1$, it is at least $ZW\ge1$. Thus $F(Z,W)\ge1/2$.

It remains to exclude equality. If $Z=0$, then $p_1=p_2=0$ and
$\ell_E(p_1,p_2)=1$, so $\CC_{2K_2}\ge1$. The case $W=0$ is identical.
Suppose therefore that $Z,W>0$.
If one of the four cross-pair inequalities~\eqref{eq:2k2-nonedge} is strict,
then~\eqref{eq:2k2-F} is strict and the result follows.
Otherwise, choose a positive coordinate in each of $\{p_1,p_2\}$ and
$\{p_3,p_4\}$. The equality assertion of Lemma~\ref{lem:2k2-pair}
forces both coordinates to equal one. Since $z_0(1)=1$, we have
$ZW\ge1$ and again $\CC_{2K_2}\ge1$. This proves strictness in all cases.
\end{proof}

\subsection{The \texorpdfstring{$C_5$}{C5} case}

Put $f(p)=p(2-p)=1-(1-p)^2$, so that $v(p)=f(p)(1-f(p))/2$.
This gives complementary targets on edges and nonedges. Each vertex
of $C_5$ belongs to four unordered pairs, so summing the following
pair inequalities will give $4\CC_{C_5}$ on the left.

\begin{lemma}\label{lem:c5-pair}
For every $p,q\in[0,1]$,
\begin{align}
 4\ell_E(p,q)+v(p)+v(q)&\ge\frac76(1-f(p))(1-f(q)),\label{eq:c5-edge}\\
 4\ell_N(p,q)+v(p)+v(q)&\ge\frac76f(p)f(q).\label{eq:c5-nonedge}
\end{align}
\end{lemma}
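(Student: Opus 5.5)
The plan is to treat both inequalities as explicit two-variable inequalities on $[0,1]^2$, after the substitution $a=1-p$, $b=1-q$. Then $f(p)=1-a^2$, $1-f(p)=a^2$ and $v(p)=\tfrac12a^2(1-a^2)$. In these variables \eqref{eq:c5-edge} reads
\[
 4\pos{ab-\sqrt{(1-a)(1-b)}}^2+\tfrac12a^2(1-a^2)+\tfrac12b^2(1-b^2)\ge\tfrac76a^2b^2 .
\]
Inequality \eqref{eq:c5-nonedge} becomes a polynomial inequality in $p,q$, since $\ell_N$ is a polynomial. Both inequalities are symmetric, so we may assume $p\le q$ wherever that helps. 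We handle them in the same way as Lemma~\ref{lem:2k2-pair}: cheap one-variable arguments where the positive part vanishes, and a polynomial certificate, to be placed in the appendix, for the remaining region.

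\emph{Nonedge inequality.} Expand $4\ell_N(p,q)=pq\bigl(3(p+q)-p^2-q^2\bigr)$ and $f(p)f(q)=pq(2-p)(2-q)$. The difference $\Delta_N=4\ell_N+v(p)+v(q)-\tfrac76f(p)f(q)$ is then a symmetric polynomial of degree at most $4$ in each variable.

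1. When $pq$ is small, the vertex terms dominate: $v(p)+v(q)\ge2\sqrt{v(p)v(q)}$, and $v(p)\approx p$ near $0$.
2. When $p,q$ are near $1$, the pair term dominates: $4\ell_N\to4$, while $\tfrac76f(p)f(q)\le\tfrac76$.
3. To make this rigorous, rewrite $\Delta_N$ in the symmetric coordinates $s=p+q$, $u=pq$, with constraint $s^2\ge4u$, $u\le s-1$ when $s\ge1$. Then check that the resulting polynomial is nonnegative on this region, for instance by showing it is monotone or convex in $u$ for fixed $s$ and checking the boundary curves $u=s^2/4$ (the diagonal $p=q$, a one-variable check) and $u=\max(0,s-1)$ (the edges $p=0$ or $q=1$).

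The only equality point I expect is $p=q=0$, so the certificate has to factor out $p$ and $q$ appropriately near the origin.

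\emph{Edge inequality.} Split according to the sign of $g=ab-\sqrt{pq}$.

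If $g\le0$, then $\ell_E=0$ and $ab\le\sqrt{pq}$, so we need $v(p)+v(q)\ge\tfrac76a^2b^2$. Using AM--GM,
\[
 v(p)+v(q)\ge ab\sqrt{(1-a^2)(1-b^2)}=ab\sqrt{pq(1+a)(1+b)}\ge a^2b^2\sqrt{(1+a)(1+b)} .
\]
This suffices once $(1+a)(1+b)\ge49/36$. In the complementary corner, where $a,b$ are small, we instead use the direct bound $v(p)+v(q)\ge\tfrac12(a^2+b^2)(1-\max(a,b)^2)\ge ab(1-\max(a,b)^2)$, which beats $\tfrac76a^2b^2$ because $ab$ is small there.

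If $g\ge0$, then $\ell_E=g^2$. Substituting $x=\sqrt p$, $y=\sqrt q$ makes everything polynomial in $(x,y)$, exactly as for $Q_2$ in Lemma~\ref{lem:2k2-pair}. We then prove that
\[
 Q_5(x,y)=4g^2+v(x^2)+v(y^2)-\tfrac76(1-x^2)^2(1-y^2)^2
\]
is nonnegative on $[0,1]^2$. The tool is a sum-of-squares decomposition, or a Bernstein-basis expansion on a subdivision of the square. The equality point $x=y=1$ must be factored out first, via powers of $(1-x)$ and $(1-y)$.

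\emph{Main obstacle.} The hard step is the $g\ge0$ branch of \eqref{eq:c5-edge}. Its polynomial $Q_5$ has fairly high degree, and the constant $\tfrac76$ is presumably tuned so that the inequality is nearly tight somewhere in the interior. My first attempt would be to find the interior minimiser numerically, to see how much slack there is. Then I would produce a Bernstein-coefficient certificate on a small subdivision of $[0,1]^2$, falling back on explicit SOS with rational coefficients if the Bernstein coefficients are not all nonnegative.
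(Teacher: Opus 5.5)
Your $g\le0$ branch of the edge inequality is correct and complete: AM--GM, plus the separate corner case $(1+a)(1+b)<49/36$, proves it. Your nonedge plan also goes through. In $s=p+q$, $u=pq$ the difference is exactly $v(s)+u\bigl(s^2-\tfrac23 s+\tfrac13-\tfrac u6\bigr)$, and the bracket is positive for $u\le s^2/4$. It is concave and increasing in $u$, so your endpoint check works (convexity alone would not have). Equality also holds at $(p,q)=(1,0),(0,1)$, not only at the origin.

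\textbf{The gap.} The genuine gap is the branch $g\ge0$ of the edge inequality. It carries all the difficulty, and you offer only a search strategy for it, not a proof. Moreover, the tool you name cannot work as stated. $Q_5$ does not vanish at $x=y=1$: there $g=-1$ and $Q_5=4$. Its zeros are the corners $(x,y)=(1,0)$ and $(0,1)$. With $x=1-\epsilon$, $y=\eta$, one finds that $Q_5=\tfrac{40}{3}\epsilon^2-16\epsilon\eta+5\eta^2$ plus terms of degree at least three, with no constant or linear part. Hence on any box $[1-h_1,1]\times[0,h_2]$, in any bidegree $(n,m)$, the tensor Bernstein coefficient diagonally adjacent to that corner equals $-16h_1h_2/(nm)<0$. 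No box subdivision or degree elevation therefore yields a certificate. You would first have to peel off an explicit square vanishing at those corners, as the paper does for $Q_2$ in Lemma~\ref{lem:app-2k2}, or find a weighted SOS; neither is supplied. The nearly degenerate quadratic form also shows where the bound is close to tight. The left side of \eqref{eq:c5-edge} divided by $(1-f(p))(1-f(q))$ tends to $1.3$ along $\eta=1.6\epsilon$, against the constant $7/6$. So the near-tightness is at these boundary corners, not in the interior as you guessed.

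\textbf{The missing idea.} An elementary argument handles both signs of $g$ at once, with no certificate. In your variables put $A=ab$, $S=a^2+b^2$, $T=a+b$. Then $v(p)+v(q)=A^2+\tfrac12S(1-S)$ and $A^2=(1-f(p))(1-f(q))$. Since $1+\tfrac7{36}>\tfrac76$, it suffices to prove $4\ell_E+\tfrac12S(1-S)\ge\tfrac7{36}A^2$.
\begin{itemize}
\item If $T\le1$, then $S\ge2A$ and $1-S=1-T^2+2A\ge2A$, which gives the bound directly.
\item If $T\ge1$, use three facts: $A^2+(1-S)=(1-a^2)(1-b^2)=pq(1+T+A)\ge2pq$; $A\le\sqrt{\ell_E}+\sqrt{pq}$, which holds for either sign of $g$; and Cauchy--Schwarz in the form $4\ell_E+c\,pq\ge\frac{4c}{4+c}A^2$.
\item When $S\le1$, take $c=\tfrac12$. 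Then $S\ge T^2/2\ge\tfrac12$, so $\tfrac12S(1-S)\ge\tfrac14(1-S)$, and the total is at least $(\tfrac49-\tfrac14)A^2=\tfrac7{36}A^2$.
\item When $S>1$, take $c=2$. Then $S\le2$ gives $\tfrac12S(1-S)\ge1-S$, and the total is at least $\tfrac13A^2$.
\end{itemize}
This is the paper's proof of the edge inequality.
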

\begin{proof}
For the nonedge inequality, put $s=p+q$ and $t=pq$.
Using the same polynomial $v$ on $[0,2]$, direct expansion gives
\[
 4\ell_N(p,q)+v(p)+v(q)-\frac76f(p)f(q)
 =v(s)+t\left(s^2-\frac23s+\frac13-\frac t6\right).
\]
Here $v(s)\ge0$, and $t\le s^2/4$ implies
\[
 s^2-\frac23s+\frac13-\frac t6
 \ge\frac{23s^2-16s+8}{24}
 =\frac{23(s-8/23)^2+120/23}{24}>0.
\]
This proves~\eqref{eq:c5-nonedge}.

For the edge inequality, put
\[
 \begin{gathered}
 x=1-p,\quad y=1-q,\quad A=xy,\quad r=\sqrt{(1-x)(1-y)},\\
 S=x^2+y^2,\qquad T=x+y,\qquad D=1-S.
 \end{gathered}
\]
Then $\ell_E=(A-r)_+^2$ and $v(p)+v(q)=A^2+SD/2$.
We will show that
\begin{equation}\label{eq:c5-edge-reduced}
 4\ell_E+\frac12SD\ge\frac7{36}A^2.
\end{equation}
If $T\le1$, then $D=1-T^2+2A\ge2A$ and $S\ge2A$, giving
$SD/2\ge2A^2$. Suppose $T\ge1$. We have
\[
 A^2+D=(1-x^2)(1-y^2)=r^2(1+T+A)\ge2r^2,
 \qquad A\le\sqrt{\ell_E}+r.
\]
For every $c>0$, the Cauchy--Schwarz inequality gives
\begin{equation}\label{eq:c5-cauchy}
 4\ell_E+cr^2\ge\frac{4c}{4+c}(\sqrt{\ell_E}+r)^2
 \ge\frac{4c}{4+c}A^2.
\end{equation}
If $D\ge0$, then $S\ge T^2/2\ge1/2$, so
\begin{align*}
 4\ell_E+\frac12SD
 &\ge4\ell_E+\frac14D
 \ge4\ell_E+\frac12r^2-\frac14A^2\\
 &\ge\left(\frac49-\frac14\right)A^2=\frac7{36}A^2.
\end{align*}
If $D<0$, then $S\le2$, and~\eqref{eq:c5-cauchy} with $c=2$ gives
\[
 4\ell_E+\frac12SD\ge4\ell_E+D
 \ge4\ell_E+2r^2-A^2\ge\frac13A^2.
\]
Thus~\eqref{eq:c5-edge-reduced} holds.
Adding $A^2$ proves~\eqref{eq:c5-edge}, since
$43/36\ge7/6$ and $A^2=(1-f(p))(1-f(q))$.
\end{proof}

\begin{proposition}\label{prop:c5}
For every $(p_1,\ldots,p_5)\in[0,1]^5$,
$\CC_{C_5}(p_1,\ldots,p_5)\ge7/12$.
\end{proposition}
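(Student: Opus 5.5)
Label $C_5$ so that its edges are $\{i,i+1\}$ (indices mod $5$); its nonedges are then $\{i,i+2\}$. Put $f_i=f(p_i)\in[0,1]$ and $g_i=1-f_i$.

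The plan is to sum the pair inequalities of Lemma~\ref{lem:c5-pair} over all ten unordered pairs of vertices. Each vertex lies in exactly four pairs, so the left-hand sides add up to exactly $4\CC_{C_5}$:
\[
 4\CC_{C_5}\ \ge\ \frac76\Bigl(\sum_{i}g_ig_{i+1}+\sum_i f_if_{i+2}\Bigr).
\]
It therefore suffices to show that the bracket is at least $2$ for every $(f_1,\ldots,f_5)\in[0,1]^5$. Then $4\CC_{C_5}\ge 7/3$, which gives $\CC_{C_5}\ge 7/12$.

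The remaining task is a purely combinatorial inequality,
\[
 \Phi(f):=\sum_{i}(1-f_i)(1-f_{i+1})+\sum_i f_if_{i+2}\ \ge\ 2 \qquad\text{on }[0,1]^5.
\]
The function $\Phi$ is affine in each coordinate $f_i$ separately, because no product $f_i^2$ occurs. Hence its minimum over the cube is attained at a vertex $f\in\{0,1\}^5$.

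For a $0/1$ vector, let $S=\{i:f_i=1\}$. Then $\Phi$ counts two things: the edges of $C_5$ with both ends outside $S$, and the nonedges of $C_5$ with both ends inside $S$. By complementation, the nonedges of $C_5$ form another $5$-cycle, so the count is
\[
 \#\{\text{edges of } C_5 \text{ inside } V\setminus S\}+\#\{\text{edges of }\overline{C_5}\text{ inside } S\}.
\]
Now check by the size of $S$.
\begin{itemize}
\item If $|S|\le1$, then $V\setminus S$ has at least four vertices of $C_5$ and so spans at least $3$ edges.
\item If $|S|\ge4$, the same argument applied to $\overline{C_5}\cong C_5$ gives at least $3$.
\item If $|S|=2$, the complement has three vertices, which span at least one edge of $C_5$. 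If $S$ is a nonedge of $C_5$, it contributes $1$; if $S$ is an edge of $C_5$, its complement is a path of three vertices and spans $2$ edges. Either way $\Phi\ge2$.
\item The case $|S|=3$ is symmetric to $|S|=2$.
\end{itemize}
In every case $\Phi\ge2$, which gives the bound. Equality holds, for instance, at $S=\{1,3\}$.

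The only delicate point is the bookkeeping. The vertex terms $v(p)+v(q)$ in Lemma~\ref{lem:c5-pair} must be counted with multiplicity exactly four per vertex, which matches the coefficient $4$ on $\ell_E$ and $\ell_N$. The multilinearity argument is routine. I therefore expect no real obstacle beyond verifying the finite case check above.
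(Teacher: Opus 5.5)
Your proof is correct and follows the paper's argument: you sum Lemma~\ref{lem:c5-pair} over all ten pairs to get $4\CC_{C_5}\ge\frac76\Phi(f)$, then minimize the multiaffine function $\Phi$ at a vertex of the cube. The only difference is cosmetic: the paper expands $\Phi(f)=5-2\sum_i f_i+\sum_{i<j}f_if_j$, so its value at a $0/1$ vertex depends only on $j=|S|$ and equals $5-2j+\binom j2\ge2$, which replaces your case check via the self-complementarity of $C_5$.
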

\begin{proof}
Write $f_i=f(p_i)$. Sum~\eqref{eq:c5-edge} over the five cycle edges
and~\eqref{eq:c5-nonedge} over the five nonedges.
Each vertex occurs in four pairs. Hence $4\CC_{C_5}\ge\frac76\Phi(f)$, where
\[
 \Phi(f)=\sum_{ij\in E(C_5)}(1-f_i)(1-f_j)
 +\sum_{ij\notin E(C_5)}f_if_j
 =5-2\sum_i f_i+\sum_{i<j}f_if_j.
\]
The function $\Phi$ is affine in each variable separately, so it attains
its minimum on $[0,1]^5$ at a vertex. If exactly $j$ coordinates equal one,
its value is $5-2j+\binom j2$. For $j=0,1,2,3,4,5$, these values are
$5,3,2,2,3,5$. Therefore $\Phi\ge2$, which gives the assertion.
\end{proof}

\subsection{The \texorpdfstring{$C_4$}{C4} case}

For the $C_4$ case, we use the sharper lower bounds $E$ and $N$
from~\eqref{eq:strong-scalar}. Each vertex of $C_4$ belongs to two
edges and one nonedge. We introduce an edge correction $h$ and a
nonedge correction $\gamma$ with $2h+\gamma=4w/5$, where $w=2v$.
After summing the pair inequalities, the remaining vertex term is
$v-2h-\gamma=-3w/10$. Define
\begin{equation}\label{eq:c4-functions}
 \begin{aligned}
 f(p)&=p(2-p),\qquad w(p)=f(p)(1-f(p))=2v(p),\\
 h(p)&=w(p)(1-p)\left(\frac7{20}-\frac{3p}{25}\right),\\
 \gamma(p)&=\frac45w(p)-2h(p)
 =\frac{w(p)}{50}(5+47p-12p^2).
 \end{aligned}
\end{equation}
These functions are nonnegative on $[0,1]$. The identity $w=f-f^2$
makes the remaining vertex terms cancel the individual $f(p_i)^2$
terms in the pair sum, leaving a quadratic in $\sum_i f(p_i)$.

\begin{lemma}\label{lem:c4-pair}
For all $p,q\in[0,1]$,
\begin{align}
 E(p,q)+h(p)+h(q)+\frac1{200}
 &\ge\frac35(1-f(p))(1-f(q)),\label{eq:c4-edge}\\
 N(p,q)+\gamma(p)+\gamma(q)
 &\ge\frac35 f(p)f(q).\label{eq:c4-nonedge}
\end{align}
\end{lemma}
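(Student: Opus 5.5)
Both inequalities are two-variable statements on $[0,1]^2$, and I would reduce each to a polynomial inequality by clearing the denominators of $E$ and $N$. The paper already handles its other two-variable facts this way, via explicit polynomial identities in the appendices.

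\emph{The nonedge inequality~\eqref{eq:c4-nonedge}.} First, if $pq=0$, say $q=0$, then $N=0$ and $f(q)=0$, so the claim is just $\gamma(p)\ge0$, which holds. If $p=q=1$, both sides equal $1\cdot$ and $3/5$ respectively (since $\gamma(1)=0$), so the claim holds. Otherwise $1-pq>0$, and I multiply through by $1-pq$. It then suffices to show that
\[
 pq(p+q-2pq)+(1-pq)\bigl(\gamma(p)+\gamma(q)\bigr)-\frac35(1-pq)f(p)f(q)\ge0.
\]
This is a symmetric polynomial in $p,q$, so I would pass to $s=p+q$ and $t=pq$ where convenient, as in the proof of Lemma~\ref{lem:c5-pair}. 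I would also use $1-f=(1-p)^2$, so that $w(p)=p(2-p)(1-p)^2$ carries the factor $(1-p)^2$. The heuristic that makes the inequality plausible is that near $p=q=1$ the term $N\approx1$ dominates $3/5$, while for small $p,q$ the product $f(p)f(q)\approx4pq$ is absorbed by the linear part of $\gamma$, namely $\gamma(p)\approx p/5$, since the factor $(1-pq)$ stays close to $1$. I would certify the polynomial by writing it as a sum of terms that are visibly nonnegative on the box, for example products of $p^a q^b(1-p)^c(1-q)^d$ with positive coefficients, after substituting $p=1-x$ and $q=1-y$ if necessary. This is the Bernstein-type certificate that an appendix would record.

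\emph{The edge inequality~\eqref{eq:c4-edge}.} Write $x=1-p$, $y=1-q$ and $D=xy$, so the right-hand side is $\frac35x^2y^2=\frac35D^2$. I would split into two regions.

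In the region where $D\le\sqrt{pq}$, we have $E=0$, and I must show
\[
 h(p)+h(q)+\frac1{200}\ge\frac35D^2 .
\]
Here I would use the constraint $xy\le\sqrt{(1-x)(1-y)}$ to keep $D$ away from $1$ unless one of $p,q$ is small, and then use that $h(p)\approx\frac{7}{20}\cdot 2p$ for small $p$. This is a one-sided estimate, which I would handle as in the $T(p)$ trick of Lemma~\ref{lem:2k2-pair}: find a one-variable function $\psi$ with $\psi(p)+\psi(q)\ge1$ on this region, and compare termwise.

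In the region where $D\ge\sqrt{pq}$, I substitute $p=u^2$ and $q=v^2$ to remove the square root, so that
\[
 E=\frac{1-u^2v^2}{(1-u^2)(1-v^2)}\bigl((1-u^2)(1-v^2)-uv\bigr)^2 .
\]
After multiplying by $(1-u^2)(1-v^2)$, the inequality becomes a polynomial inequality in $(u,v)\in[0,1]^2$. Because of the constraint, it only needs to hold on that region, but I would first try to prove it on the whole square.

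\emph{Main obstacle.} I expect the edge inequality to be the hard part. The slack constant $1/200$ signals a near-tight configuration, presumably near the curve $D=\sqrt{pq}$ with moderate $p=q$. There, $E$ vanishes to second order while $D^2$ is still sizable, so a crude sum-of-terms certificate will likely fail. My first attempt would be to restrict to the diagonal $p=q$ to locate the minimum numerically, and then exploit symmetry: write the polynomial in $s=u+v$ and $t=uv$ and bound it from below using $t\le s^2/4$. If that is insufficient, I would subdivide the square into a few rectangles and give a separate certificate on each.
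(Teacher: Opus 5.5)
Your active-region step is the paper's: with $p=x^2$, $q=y^2$, $A=(1-p)(1-q)$, $g=A-\sqrt{pq}$ and $H=h(p)+h(q)+\frac1{200}-\frac35A^2$, one proves $(1-pq)g^2+AH\ge0$ on the whole square and divides by $A$. The gap is the inactive region $g\le0$, where $E=0$ and you need $H\ge0$.

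The $T$-trick of Lemma~\ref{lem:2k2-pair} worked because its target was additively separable. Here $\frac35(1-p)^2(1-q)^2$ is a product, so $\psi(p)+\psi(q)\ge1$ gives no termwise comparison unless you also bound $(1-q)^2$ by a function of $p$. The natural bound $(1-q)^2\le p/(1-p)^2$ loses a factor $1/c_0\approx2.6$ at the corner $p=q=c_0=(3-\sqrt5)/2$ of the region. There the slack is only about $0.006$, while both sides are about $0.09$. Symmetric splittings such as $x^2y^2\le\frac12(x^4+y^4)$ (with $x=1-p$, $y=1-q$) also fail, for example at $(0,1)$.

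The missing idea (Lemma~\ref{lem:app-c4-zero}) is that the whole-square inequality already gives $AH\ge0$, hence $H\ge0$, on the curve $g=0$. At the two points of that curve where $A=0$, namely $(0,1)$ and $(1,0)$, one has $H=\frac1{200}$. This sign then propagates inward. For fixed $z=(p+q)/2$, $H$ is a cubic in $\delta=(p-q)^2/4$ with $\partial_\delta^2(100H)=-144\delta-4(180z^2-475z+313)<0$. The inactive set is empty for $z<c_0$, and otherwise it is an interval $0\le\delta\le\delta_*$ (all admissible $\delta$ when $z\ge\frac12$). Concavity therefore reduces the claim to two endpoints:
\begin{itemize}
\item $\delta=0$, which is the one-variable bound $(1-z)^3(12z^3-59z^2+100z-30)\ge0$ for $z\ge c_0$;
\item $\delta=\delta_*$, which lies on $g=0$ or has a coordinate equal to $1$, where $A=0$ and $H\ge0$.
\end{itemize}

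For the nonedge inequality, the certificate you describe cannot exist. Let $\Psi$ be your cleared polynomial and put $\bar q=1-q$. At the corner $(p,q)=(0,1)$,
\[
\Psi=\tfrac45\bar q^2-2p\bar q+\tfrac{99}{50}p^2+O(|p|^3+|\bar q|^3).
\]
The constant and linear coefficients vanish; in particular the coefficient of $p$ is $1+\gamma'(0)-\frac65=0$. The quadratic part is positive definite but has a negative mixed coefficient.

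Now take any sum $\sum\lambda\,p^i(1-p)^jq^k(1-q)^l$ with all $\lambda\ge0$. The vanishing of the constant and linear coefficients forces $\lambda=0$ whenever $i+l\le1$. The coefficient of $p\bar q$ is then the sum of the $\lambda$ with $i=l=1$, which is nonnegative. So no degree and no substitution $p\mapsto1-p$ can work. The $s,t$ coordinates do not help without squares either, since $t\ge0$ and $1-s+t\ge0$ are the active constraints there.

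A square must be split off first. The paper subtracts
\[
S=9(p-q)^2\bigl[(1-p)^2q^2(3p-2(1-q))^2+(1-q)^2p^2(3q-2(1-p))^2\bigr]
\]
from $50\Psi$. Its local part $9(3p-2\bar q)^2$ leaves $4\bar q^2+8p\bar q+18p^2$. The paper then certifies the remainder by Bernstein expansions in $z$ and a normalized $d^2$ on two halves, plus a positivity check of the sextic $T(z)$. That check is delicate because the nonedge slack is below $10^{-3}$ at $p=q=\frac58$.
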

\begin{proof}
Lemma~\ref{lem:app-c4-nonedge} proves~\eqref{eq:c4-nonedge}.
For the edge inequality, put
\[
 A=(1-p)(1-q),\quad g=A-\sqrt{pq},\quad
 H=h(p)+h(q)+\frac1{200}-\frac35A^2.
\]
Lemma~\ref{lem:app-c4-polynomial} proves that
\begin{equation}\label{eq:c4-polynomial}
 (1-pq)g^2+AH\ge0
\end{equation}
for every $p,q\in[0,1]$.
If $g\ge0$ and $A>0$, divide by $A$ to obtain $E(p,q)+H\ge0$.
If $A=0$, the desired inequality is immediate.
In particular, $H\ge0$ on the boundary $g=0$.
When $g<0$, the polynomial bound alone does not suffice: $E$ contains
the positive part of $g$ and vanishes there, so we need $H\ge0$.
Lemma~\ref{lem:app-c4-zero} proves this by a one-variable concavity
argument on the region $g\le0$, using the boundary bound above.
This proves~\eqref{eq:c4-edge}.
\end{proof}

\begin{proposition}\label{prop:c4}
For every $(p_1,p_2,p_3,p_4)\in[0,1]^4$,
\[
 \widetilde\CC_{C_4}\ge\frac3{10}
 \left(\sum_{i=1}^4 f(p_i)-\frac52\right)^2+\frac{101}{200}.
\]
\end{proposition}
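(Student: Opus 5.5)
Label the cycle $C_4$ as $1-2-3-4-1$. It then has edges $12,23,34,41$ and nonedges $13,24$. The plan is to add the four instances of \eqref{eq:c4-edge} over the edges to the two instances of \eqref{eq:c4-nonedge} over the nonedges. Both are supplied by Lemma~\ref{lem:c4-pair}.

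Each vertex lies on two edges and one nonedge. So on the left the corrections contribute $\sum_i\bigl(2h(p_i)+\gamma(p_i)\bigr)=\frac45\sum_i w(p_i)$, plus the constant $4/200$. Since $w=2v$, we have $\frac45 w=\frac85 v$. The summed inequality therefore reads
\[
 \sum_{ij\in E}E(p_i,p_j)+\sum_{ij\notin E}N(p_i,p_j)+\frac85\sum_i v(p_i)+\frac{4}{200}
 \ge\frac35\Bigl[\sum_{ij\in E}(1-f_i)(1-f_j)+\sum_{ij\notin E}f_if_j\Bigr],
\]
where $f_i=f(p_i)$. Add $\sum_i v(p_i)-\frac85\sum_i v(p_i)=-\frac35\sum_i v(p_i)=-\frac3{10}\sum_i w(p_i)$ to both sides. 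This gives
\[
 \widetilde\CC_{C_4}\ge\frac35\Psi(f)-\frac3{10}\sum_i(f_i-f_i^2)-\frac1{50},
\]
with $\Psi(f)$ the bracket above.

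Next I expand $\Psi$. The four edge terms give $4-2\sum_i f_i+\sum_{ij\in E}f_if_j$. The nonedges contribute $\sum_{ij\notin E}f_if_j$. Together these make
\[
 \Psi=4-2\sigma+e_2,\qquad \sigma=\sum_i f_i,\quad e_2=\sum_{i<j}f_if_j=\frac{\sigma^2-\sum_i f_i^2}{2}.
\]
Substituting, the total becomes
\[
 \frac35\Bigl(4-2\sigma+\frac{\sigma^2}{2}\Bigr)-\frac3{10}\sum_i f_i^2-\frac3{10}\sigma+\frac3{10}\sum_i f_i^2-\frac1{50}.
\]
The $f_i^2$ terms cancel, as the paper anticipated. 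What remains is
\[
 \frac3{10}\sigma^2-\frac32\sigma+\frac{12}{5}-\frac1{50}.
\]
Completing the square gives $\frac3{10}(\sigma-\frac52)^2-\frac3{10}\cdot\frac{25}{4}+\frac{12}{5}-\frac1{50}$. The constant is $-\frac{15}{8}+\frac{12}{5}-\frac1{50}=\frac{-375+480-4}{200}=\frac{101}{200}$, which is exactly the claimed bound.

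There is no real obstacle here, since all the analytic work is packed into Lemma~\ref{lem:c4-pair} and its appendix lemmas. The only care needed is in the bookkeeping: each vertex must receive exactly $2h+\gamma$, and the $v$-terms must be accounted for correctly, namely $\sum_i v$ in $\widetilde\CC_{C_4}$ against the $\frac85\sum_i v$ consumed by the pair inequalities. The step I would double-check is the sign and constant in the final square completion.
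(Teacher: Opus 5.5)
Your proposal is correct and matches the paper's proof: it sums Lemma~\ref{lem:c4-pair} over the four edges and two nonedges, uses $v-2h-\gamma=-\frac{3}{10}w$ together with $w=f-f^2$ to cancel the $f_i^2$ terms, and completes the square. Your bookkeeping and the constant $\frac{101}{200}$ both check out.
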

\begin{proof}
Write $f_i=f(p_i)$, $w_i=w(p_i)$, and $S=\sum_i f_i$.
Summing Lemma~\ref{lem:c4-pair}, with two edge corrections and one
nonedge correction at each vertex, gives
\[
 \widetilde\CC_{C_4}\ge\frac35\Phi(f)
 +\sum_i\bigl(v(p_i)-2h(p_i)-\gamma(p_i)\bigr)-\frac1{50},
\]
where $1/50$ is the sum of the four edge allowances $1/200$, and
\[
 \Phi(f)=\sum_{ij\in E(C_4)}(1-f_i)(1-f_j)
 +\sum_{ij\notin E(C_4)}f_if_j
 =4-2S+\sum_{i<j}f_if_j.
\]
Since $v-2h-\gamma=-3w/10$, we obtain
\begin{align*}
 \widetilde\CC_{C_4}
 &\ge\frac35\left(\Phi(f)-\frac12\sum_i w_i\right)-\frac1{50}\\
 &=\frac35\left(4-\frac52S+\frac12S^2\right)-\frac1{50}
 =\frac3{10}(S-5/2)^2+\frac{101}{200}.
\end{align*}
\end{proof}

Propositions~\ref{prop:2k2}, \ref{prop:c5}, and~\ref{prop:c4}
prove Theorem~\ref{thm:local}.

Combining the three cases gives the following consequence.

\begin{corollary}\label{cor:nonsplit}
For every non-split graph $G$ and every $1\le k\le |V(G)|$,
\[
 S_k^+(G)<|E(G)|+\binom{k+1}{2}.
\]
\end{corollary}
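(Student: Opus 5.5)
By Lemma~\ref{lem:variational}, it suffices to show $\EE_G(P)<0$ for every rank-$k$ orthogonal projection $P$. There are only finitely many (in fact compact) choices of $P$, and $S_k^+(G)-|E(G)|-\binom{k+1}{2}$ equals the maximum of $\EE_G(P)$ over this compact set. So a strict inequality for each $P$ yields strict inequality for the maximum, since the maximum is attained. Fix such a $P$, with diagonal entries $p_i$. I would start from the second form~\eqref{eq:roadmap} of Lemma~\ref{lem:identity} and discard the nonnegative terms $(\tau-1)^2/2$, $r_0$ and $R$. This gives
\[
 \EE_G(P)\le\frac12-\sum_i v(p_i)-\TT_G(P).
\]

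Since $G$ is non-split, Lemma~\ref{lem:split-characterization} provides an induced subgraph $G[U]$ isomorphic to $2K_2$, $C_4$, or $C_5$, with $|U|=d\in\{4,5\}$. Every term of $\sum_i v(p_i)$ is nonnegative, and so is every term $\MM_{ij}$, $\BB_{ij}$ of $\TT_G(P)$ (Lemma~\ref{lem:pair}). Hence I may drop all terms except those indexed by vertices of $U$ and by pairs inside $U$. Because $G[U]$ is induced, a pair in $U$ is an edge of $G$ exactly when it is an edge of the model graph $H\cong G[U]$. Therefore the retained pair terms are $\MM_{ij}$ on the edges of $H$ and $\BB_{ij}$ on the nonedges of $H$.

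Next I apply Lemma~\ref{lem:scalar} to each retained pair: $\MM_{ij}\ge E(p_i,p_j)\ge\ell_E(p_i,p_j)$ and $\BB_{ij}\ge N(p_i,p_j)\ge\ell_N(p_i,p_j)$. This bounds the retained sum below by $\CC_H(p|_U)$ when $H$ is $2K_2$ or $C_5$, and by $\widetilde\CC_H(p|_U)$ when $H=C_4$. Theorem~\ref{thm:local} then gives that this quantity is strictly greater than $1/2$: we have $7/12>1/2$ and $101/200>1/2$, and for $2K_2$ the strict inequality is part of Proposition~\ref{prop:2k2}. Hence $\EE_G(P)<0$.

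The only delicate point is passing from a pointwise strict inequality to a strict spectral inequality. I would argue this by the attainment noted above: the maximizing projection from Lemma~\ref{lem:variational} (the projection onto the top $k$ eigenvectors) is itself one of the $P$ considered, so $S_k^+(G)-|E(G)|-\binom{k+1}{2}=\EE_G(P^\ast)<0$. No uniform gap is needed. Beyond this, the proof is bookkeeping: one must check that the vertex labelling of $U$ matches the labelling in Theorem~\ref{thm:local}, and that the restriction of $p$ to $U$ lies in $[0,1]^d$, which holds by~\eqref{eq:row}.
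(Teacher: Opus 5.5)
Your proposal is correct and follows the paper's proof essentially step for step. You discard the nonnegative terms in \eqref{eq:roadmap}, keep only the vertex and pair terms of an induced $2K_2$, $C_4$ or $C_5$, bound them by Lemma~\ref{lem:scalar} and Theorem~\ref{thm:local}, and conclude via the maximizing eigenprojection of Lemma~\ref{lem:variational}. One aside is inaccurate: for $1\le k<n$ there are infinitely many rank-$k$ projections, not finitely many, but this is harmless because you only use the attained maximum at the projection onto the top $k$ eigenvectors.
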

\begin{proof}
Fix a rank-$k$ orthogonal projection $P$. By
Lemma~\ref{lem:split-characterization}, $G$ contains an induced subgraph
$H$ isomorphic to $2K_2$, $C_4$, or $C_5$. Retaining only the vertex
and pair terms belonging to $H$, Lemma~\ref{lem:scalar} and
Theorem~\ref{thm:local} give
\[
 \sum_{i\in V(G)}v(p_i)+\TT_G(P)>\frac12.
\]
Hence~\eqref{eq:roadmap} yields $\EE_G(P)<0$. Applying
Lemma~\ref{lem:variational} to a maximizing projection proves the assertion.
\end{proof}

\begin{remark}\label{rem:nonsplit-gap}
The preceding argument uses the full projection $P$; its principal
submatrix indexed by $V(H)$ need not be a projection.

Since $\CC_{2K_2}-1/2$ is continuous and positive on $[0,1]^4$,
it has a positive minimum $\delta_2$. Together with
Theorem~\ref{thm:local}, this gives an absolute constant
$\delta=\min\{\delta_2,1/200\}>0$ such that
\[
 S_k^+(G)\le |E(G)|+\binom{k+1}{2}-\delta
\]
for every non-split graph $G$ and every $1\le k\le |V(G)|$.
\end{remark}

\section{Conclusions and remarks}\label{sec:conclusion}

\begin{proof}[Proof of Theorem~\ref{thm:main}]
Theorem~\ref{thm:split} and Lemma~\ref{lem:variational} prove the bound
for split graphs, while Corollary~\ref{cor:nonsplit} gives strictness
for non-split graphs. Equality can therefore occur only for a split graph,
and Theorem~\ref{thm:split-equality} gives precisely the stated equality cases.
\end{proof}

Thus Conjecture~\ref{conj:signless} holds for every graph $G$ and every
$1\le k\le |V(G)|$. Equality holds if and only if $k=1$ and deleting
all isolated vertices from $G$ leaves a star $K_{1,a}$, $a\ge1$,
or a triangle $K_3$.

For graphs $G$ and $H$ on disjoint vertex sets, their \emph{join}
$G\vee H$ is obtained by adding all edges between the two vertex sets.
We write $\overline{K_b}$ for the edgeless graph on $b$ vertices.

Although the bound is strict for $k\ge2$, the gap can tend to zero
with $k$ fixed.

\begin{proposition}\label{prop:asymptotic-sharpness}
Fix an integer $k\ge2$. For an integer $b\ge2$, let
\[
 G_{k,b}=K_k\vee\overline{K_b},
\]
the graph obtained by joining every vertex of a $k$-clique to every
vertex of an independent set of size $b$. Put $T=b+3k-2$.
Then
\begin{equation}\label{eq:join-gap}
 \begin{split}
 |E(G_{k,b})|+\binom{k+1}{2}-S_k^+(G_{k,b})
   &=\frac{T-\sqrt{T^2-8k(k-1)}}2\\
   &=\frac{4k(k-1)}{T+\sqrt{T^2-8k(k-1)}}>0.
 \end{split}
\end{equation}
For each fixed $k\ge2$, this gap is asymptotic to $2k(k-1)/b$,
and hence tends to zero as $b\to\infty$.
In particular,
\[
 \inf_{\substack{G\text{ split}\\|V(G)|\ge k}}
 \left(|E(G)|+\binom{k+1}{2}-S_k^+(G)\right)=0.
\]
Thus no positive constant depending only on $k$ can be subtracted
from the split bound.
\end{proposition}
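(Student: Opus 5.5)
The plan is to compute the signless Laplacian spectrum of $G_{k,b}$ explicitly using its equitable partition into the clique $K_k$ and the independent set $\overline{K_b}$, identify the $k$ largest eigenvalues, and show the gap equals the smaller root of a $2\times2$ quotient matrix.

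In the block form~\eqref{eq:Qsplit} with $c=k$, every $r_i=b$ and every $s_j=k$. This gives $A=(b+k-2)I_k+J_k$, $B=J_{k\times b}$ and $D=kI_b$. The eigenvectors split into three families:
\begin{itemize}
\item A vector supported on the clique with coordinate sum zero is killed by $J_k$ and by $B^{\mathsf T}$. Hence $b+k-2$ is an eigenvalue of multiplicity $k-1$.
\item A vector supported on the independent set with coordinate sum zero is killed by $B$. Hence $k$ is an eigenvalue of multiplicity $b-1$.
\item The remaining two eigenvalues are those of the quotient matrix $\left(\begin{smallmatrix}b+2k-2&b\\k&k\end{smallmatrix}\right)$.
\end{itemize}
The quotient matrix has trace $T=b+3k-2$ and determinant $k(b+2k-2)-bk=2k(k-1)$. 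Its eigenvalues are therefore
\[
\lambda_\pm=\frac{T\pm\sqrt{T^2-8k(k-1)}}2,\qquad \lambda_+\lambda_-=2k(k-1),\qquad \lambda_++\lambda_-=T.
\]

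Next I order the spectrum, which I expect to be the only point needing care. The quotient matrix is positive, so its Perron root exceeds its diagonal entries: $\lambda_+> b+2k-2\ge b+k-2$. Since $b\ge2$, we have $b+k-2\ge k$. For the small root,
\[
\lambda_-=\frac{2k(k-1)}{\lambda_+}<\frac{2k(k-1)}{b+2k-2}\le k-1<k,
\]
where the middle inequality uses $b\ge2$. Hence the $k$ largest eigenvalues are $\lambda_+$ together with the $k-1$ copies of $b+k-2$. This gives
\[
S_k^+(G_{k,b})=\lambda_++(k-1)(b+k-2).
\]

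Now I compute the gap. We have $|E(G_{k,b})|=\binom k2+kb$, so $|E|+\binom{k+1}2=k^2+kb$. Subtracting $S_k^+$ leaves
\[
k^2+kb-(k-1)(b+k-2)-\lambda_+=b+3k-2-\lambda_+=T-\lambda_+=\lambda_-.
\]
This is the first expression in~\eqref{eq:join-gap}. Rationalising the numerator gives the second expression, and $\lambda_->0$ because $\lambda_+\lambda_->0$.

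For the asymptotics, $T+\sqrt{T^2-8k(k-1)}\sim 2T\sim 2b$ as $b\to\infty$ with $k$ fixed. The second form of~\eqref{eq:join-gap} then gives $\lambda_-\sim 2k(k-1)/b\to0$.

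Finally, each $G_{k,b}$ is a split graph on $k+b\ge k$ vertices, and Theorem~\ref{thm:split} shows every gap over split graphs is nonnegative. Together with $\lambda_-\to0$, this gives the infimum $0$. Consequently no positive constant depending only on $k$ can be subtracted from the split bound.
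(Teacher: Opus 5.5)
Your proof is correct and follows the paper's argument essentially step by step: the same three eigenspace families, the same quotient matrix with characteristic polynomial $\lambda^2-T\lambda+2k(k-1)$, and the same gap computation $T-\lambda_+=\lambda_-$. The only difference is how you order the spectrum. The paper evaluates the characteristic polynomial at $k$ and at $b+2k-2$, getting $-bk<0$ both times; you instead use that the Perron root exceeds the diagonal entries and then bound $\lambda_-$ through $\lambda_+\lambda_-=2k(k-1)$, and either route works.
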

\begin{proof}
The graph has $kb+\binom k2$ edges, and its signless Laplacian is
\[
 Q(G_{k,b})=
 \begin{pmatrix}
 (b+k-2)I_k+J_k&\one_k\one_b^{\mathsf T}\\
 \one_b\one_k^{\mathsf T}&kI_b
 \end{pmatrix},
\]
where $\one_d$ is the all-ones vector in $\R^d$.
Vectors supported on the clique and having coordinate sum zero
have eigenvalue $b+k-2$, with multiplicity $k-1$.
Vectors supported on the independent set and having coordinate sum
zero have eigenvalue $k$, with multiplicity $b-1$.
The remaining two-dimensional invariant subspace consists of vectors
constant on each part. In the basis of the two part-indicator vectors,
the restriction is represented by
\[
 \begin{pmatrix}b+2k-2&b\\k&k\end{pmatrix}.
\]
Its characteristic polynomial is
\[
 g(\lambda)=(\lambda-b-2k+2)(\lambda-k)-bk
             =\lambda^2-T\lambda+2k(k-1).
\]
The roots are
\[
 \lambda_\pm=\frac{T\pm\sqrt{T^2-8k(k-1)}}2.
\]
They are real and distinct, since the discriminant can also be
written as $(b+k-2)^2+4bk>0$.
Their product and sum are positive, so both roots are positive.
Moreover, $g(k)=-bk<0$, and hence $\lambda_-<k<\lambda_+$.
Also $g(b+2k-2)=-bk<0$, so $\lambda_+>b+2k-2>b+k-2$.
Since $b\ge2$, we have $k\le b+k-2$.
It follows that the sum of the largest $k$ eigenvalues is
\[
 S_k^+(G_{k,b})=\lambda_++(k-1)(b+k-2).
\]
When $b=2$ the middle eigenvalues may coincide, but the same sum
still holds.

Using $\binom k2+\binom{k+1}{2}=k^2$, direct subtraction gives
\begin{align*}
 |E(G_{k,b})|+\binom{k+1}{2}-S_k^+(G_{k,b})
  &=kb+k^2-(k-1)(b+k-2)-\lambda_+\\
  &=b+3k-2-\lambda_+=T-\lambda_+=\lambda_-.
\end{align*}
Rationalizing the numerator proves~\eqref{eq:join-gap}.
For fixed $k$, $T/b\to1$ and
$\sqrt{T^2-8k(k-1)}/b\to1$, so
\[
 b\lambda_-\longrightarrow2k(k-1).
\]
By Theorem~\ref{thm:split}, the gap is nonnegative for every split graph.
For each fixed $k\ge2$, the gap for $G_{k,b}$ tends to zero as
$b\to\infty$, proving the stated infimum and showing that the upper
bound is asymptotically sharp even within the class of split graphs.
\end{proof}

Therefore, we conclude that Conjecture~\ref{conj:signless} holds for all
simple undirected graphs and that its upper bound is asymptotically sharp for each fixed
$k\ge2$. Theorem~\ref{thm:main} gives a complete characterization of
the equality cases.

The local estimates distinguish non-split graphs from this family.
Remark~\ref{rem:nonsplit-gap} gives a positive absolute gap for
non-split graphs, whereas the split gaps approach zero at each fixed
rank $k\ge2$.

\begin{remark}
The maximum principle of \citet{Fan} reduces
the problem to bounding $\tr(PQ(G))$
over rank-$k$ orthogonal projections $P$.
For the ordinary Laplacian, Lew~\citep{Lew,LewPartition} obtained
approximate bounds for Brouwer's conjecture.
\citet{KT} proved the conjecture, and \citet*{CCYZ}
characterized its equality cases.
Our proof is motivated by the projection formulation of
\citet[Section~5]{KT}. The argument of Kothari and Tudose uses
$P\one=0$, where $\one$ is the
all-ones vector. This relation is not available in general for the
signless Laplacian. We therefore derive
two identities suited to the signless setting and treat split
and non-split graphs separately.

For split graphs, we use a block-matrix argument based on the
clique--independent-set partition. The identity in
Lemma~\ref{lem:block} separates two squared Frobenius norms and a
degree term from $\tr(PQ(G))$. Rank and inverse estimates then give
the required bound, and the equality conditions yield the stars
and triangles in Theorem~\ref{thm:main}.

For non-split graphs, the key identity is
Lemma~\ref{lem:identity}. It writes the excess
$\tr(PQ(G))-m-\binom{k+1}{2}$ as $1/2$ minus a sum of nonnegative
terms. Every non-split graph contains an induced $2K_2$, $C_4$, or
$C_5$. We show that the vertex and pair terms belonging to any such
induced subgraph already sum to more than $1/2$, so the excess is
negative. These local inequalities follow from two-variable
estimates and explicit nonnegative polynomial expansions.
\end{remark}

\par\bigskip
\appendix
\pdfbookmark[0]{Appendices: Polynomial identities and equality conditions}{appendices}
\section*{Appendices: Polynomial identities and equality conditions}
\section{A polynomial inequality for two disjoint edges}\label{app:two-edges}

For an integer $m\geq0$, define the normalized Bernstein polynomials
\[
 \beta_{m,j}(t)=\binom mj t^j(1-t)^{m-j},\qquad 0\leq j\leq m.
\]
They are nonnegative and have sum one on $[0,1]$. For a list
$a=(a_0,\ldots,a_m)$, write
\begin{equation}\label{app:bernstein}
 \mathcal B[a](t)=\sum_{j=0}^m a_j\beta_{m,j}(t).
\end{equation}
A polynomial with nonnegative Bernstein coefficients is nonnegative on the
unit interval. The bivariate version follows in the same way.

\begin{lemma}\label{lem:app-2k2}
For $x,y\in[0,1]$, put $g=(1-x^2)(1-y^2)-xy$ and
\[
 Q_2(x,y)=2v(x^2)+2v(y^2)+2g^2
 -\left(1-\frac{x^3(6-x^2)}5-\frac{y^3(6-y^2)}5\right)^2.
\]
Then $Q_2(x,y)\geq0$.
\end{lemma}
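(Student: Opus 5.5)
The plan is to certify $Q_2\ge0$ on $[0,1]^2$ using the Bernstein machinery just introduced. First I would expand $Q_2$ explicitly as a polynomial in $x,y$. Here $v(x^2)=\tfrac12x^2(1-x^2)^2(2-x^2)$ has degree $8$ in $x$, and $g^2$ has bidegree $(4,4)$. The square $(1-z(x)-z(y))^2$, with $z(x)=x^3(6-x^2)/5$, has degree $10$ in each variable separately. So $Q_2$ has bidegree at most $(10,10)$, and it is symmetric in $x\leftrightarrow y$.

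I would then write $Q_2$ in the tensor Bernstein basis $\beta_{10,i}(x)\beta_{10,j}(y)$ and list the $11\times11$ coefficient array $a_{ij}$, which is symmetric. If every $a_{ij}\ge0$, the lemma follows at once, since each basis product is nonnegative on the square.

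Before computing, I would check the corners and edges, because that is where tightness will show up.
\begin{itemize}
\item At $(0,0)$: $g=1$, so $Q_2=2-1=1$.
\item At $(1,1)$: $z=1$ and $g=-1$, so $Q_2=2-1=1$.
\item On the edge $y=0$: $Q_2=2v(x^2)+2(1-x^2)^2-(1-z(x))^2$. This one-variable polynomial vanishes only if both parts are tight.
\item At $(1,0)$: $v(1)=0$ and $g=0$, while $1-z(1)=0$, so $Q_2=0$.
\end{itemize}
So $Q_2$ has a zero at the corners $(1,0)$ and $(0,1)$. The Bernstein coefficients $a_{10,0}$ and $a_{0,10}$ are then exactly $0$, which is allowed, but the coefficients adjacent to these corners may well be negative. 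This is the main obstacle I expect.

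If negative coefficients appear near $(1,0)$, I would first try degree elevation, raising to bidegree $(12,12)$ or $(14,14)$. This typically pushes coefficients toward the function values and often fixes mild negativity away from true zeros.

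If the zero at the corner obstructs this, I would instead split the square. I would subdivide with de Casteljau at $x=1/2$ and $y=1/2$, getting four subsquares. On the subsquare containing the corner $(1,0)$, I would work in local variables $s=1-x$ and $y$. There I would factor out the vanishing order: check that $\partial_sQ_2$ and $\partial_yQ_2$ at the corner are nonnegative (in fact positive along the inward directions), and then show the remaining polynomial has nonnegative Bernstein coefficients.

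Finally, I would record the coefficient tables explicitly, since they are all rational, so that the verification is reproducible by exact arithmetic.
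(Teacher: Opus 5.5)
There is a genuine gap, located at the corner you flagged, and none of your proposed remedies can close it. At $(1,0)$ the zero of $Q_2$ is of second order, because each of the four terms vanishes to second order there. So with $s=1-x$, both $\partial_sQ_2$ and $\partial_yQ_2$ vanish at the corner rather than being positive, and your ``factor out the vanishing order'' step rests on a false premise.

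Using $2v(x^2)\approx4s^2$, $2v(y^2)\approx2y^2$, $g\approx2s-y$ and $1-z(x)-z(y)\approx\frac{13}{5}s$, one gets
\[
 Q_2=\frac{131}{25}s^2-8sy+4y^2+O(|s|^3+|y|^3).
\]
This quadratic form is positive definite, but its mixed coefficient is negative, and that is fatal for tensor Bernstein bases. Take any box $[1-h,1]\times[0,h']$ and any bidegree $(n,m)$. The coefficient diagonally adjacent to the corner equals $Q_2+\frac{h}{n}\partial_sQ_2+\frac{h'}{m}\partial_yQ_2+\frac{hh'}{nm}\partial_s\partial_yQ_2$ evaluated at $(1,0)$, which is $-8hh'/(nm)<0$. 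On the whole square in bidegree $(10,10)$ this is the coefficient $-2/25$ at index $(9,1)$.

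So your primary expansion fails, and degree elevation never repairs it. No de Casteljau subdivision into axis-parallel boxes does either. The corner $(0,1)$ behaves the same way by symmetry.

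What is missing is a step that removes the negative cross term before the Bernstein coefficients are tested. The paper subtracts the explicit square $25(x+y-1)^2(x-y)^2(4+3xy)^2$. This square is nonnegative, vanishes at both corner zeros, and equals $400(y-s)^2+O(|s|^3+|y|^3)$ near $(1,0)$. It therefore turns the quadratic part $524s^2-800sy+400y^2$ of $100Q_2$ into $124s^2$. This is why the residual table has $A_{9,1}=0$ and $A_{8,0}=124$, and the residual then has nonnegative degree-$(10,10)$ Bernstein coefficients.

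An alternative would be to give up axis-aligned boxes at the corners. For instance, you could split along a ray $y=ts$ with $1<t<\frac{131}{100}$, so that the polar form of the quadratic part is positive on both resulting triangles. The higher-order terms would then still need certification. Either way, some such device has to be added to your plan.
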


\begin{proof}
We separate an explicit square and express the residual in a basis
whose elements are nonnegative on $[0,1]^2$. Use the unnormalized
Bernstein basis polynomials $b_i(t)=t^i(1-t)^{10-i}$, for
$0\leq i\leq10$. Thus $b_i=\beta_{10,i}/\binom{10}{i}$; this
normalization allows the coefficients below to be listed as integers.
The identity is
\begin{equation}\label{app:2k2-identity}
 100Q_2(x,y)=25(x+y-1)^2(x-y)^2(4+3xy)^2
 +\sum_{i,j=0}^{10}A_{ij}b_i(x)b_j(y),
\end{equation}
The entries of the symmetric integer matrix $A$ are determined from
the power coefficients of the residual by~\eqref{app:2k2-coefficients}.
The following table lists them; their nonnegativity gives the required
sign in~\eqref{app:2k2-identity}.
\begin{center}
\small
\setlength{\tabcolsep}{5pt}
\begin{tabular}{r|rrrrrr}
\toprule
$i\backslash j$&0&1&2&3&4&5\\
\midrule
0&100&1000&3900&8240&10780&9200\\
1&1000&10400&41800&90200&118800&100000\\
2&3900&41800&169500&363000&465875&373500\\
3&8240&90200&363000&754562&918084&679230\\
4&10780&118800&465875&918084&1031288&698860\\
5&9200&100000&373500&679230&698860&504250\\
6&4956&51960&178470&288280&299130&386312\\
7&1424&14040&40180&52830&104140&326898\\
8&124&1040&555&1812&52964&201148\\
9&0&0&0&3274&22468&64150\\
10&0&0&0&200&1475&4500\\
\bottomrule
\end{tabular}

\medskip
\begin{tabular}{r|rrrrr}
\toprule
$i\backslash j$&6&7&8&9&10\\
\midrule
0&4956&1424&124&0&0\\
1&51960&14040&1040&0&0\\
2&178470&40180&555&0&0\\
3&288280&52830&1812&3274&200\\
4&299130&104140&52964&22468&1475\\
5&386312&326898&201148&64150&4500\\
6&585720&596960&346070&99080&7406\\
7&596960&594210&327260&89590&7124\\
8&346070&327260&174412&47004&3959\\
9&99080&89590&47004&12698&1120\\
10&7406&7124&3959&1120&100\\
\bottomrule
\end{tabular}
\end{center}
Every entry of $A$ is nonnegative. Thus each term on the right of
\eqref{app:2k2-identity} is nonnegative on $[0,1]^2$. For an explicit
coefficient check, write
\[
 100Q_2(x,y)-25(x+y-1)^2(x-y)^2(4+3xy)^2
 =\sum_{k,l=0}^{10}c_{kl}x^ky^l.
\]
Then the entries in the table satisfy
\begin{equation}\label{app:2k2-coefficients}
 A_{ij}=\sum_{k=0}^i\sum_{l=0}^j
 c_{kl}\binom{10-k}{i-k}\binom{10-l}{j-l}.
\end{equation}
Indeed, $t^k=\sum_{i=k}^{10}\binom{10-k}{i-k}b_i(t)$ by the binomial
theorem. Applying this identity in both variables proves
\eqref{app:2k2-identity} from \eqref{app:2k2-coefficients}.
\end{proof}

\section{A nonedge inequality for the four-cycle}\label{app:four-cycle-nonedge}

We use $f,w,h,\gamma$ from \eqref{eq:c4-functions} and the Bernstein
notation \eqref{app:bernstein}.

\begin{lemma}\label{lem:app-c4-nonedge}
For every $p,q\in[0,1]$,
\[
 N(p,q)+\gamma(p)+\gamma(q)\geq\frac35 f(p)f(q).
\]
\end{lemma}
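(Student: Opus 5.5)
We need to prove $N(p,q)+\gamma(p)+\gamma(q)\ge\frac35 f(p)f(q)$ on $[0,1]^2$. The plan is to clear the denominator of $N$, reducing the claim to nonnegativity of a single bivariate polynomial, and then certify that polynomial the way Lemma~\ref{lem:app-2k2} does.

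\textbf{Boundary cases.} If $pq=0$, say $q=0$, then $N=0$ and $f(q)=0$, so the claim reduces to $\gamma(p)\ge0$. This holds because $5+47p-12p^2>0$ on $[0,1]$ and $w\ge0$. If $p=q=1$, then $N=1$, $\gamma(1)=0$ (since $w(1)=0$), and $f(1)=1$, so the claim is $1\ge 3/5$. By continuity of $N$ (Lemma~\ref{lem:scalar}), it therefore suffices to treat the open set where $pq<1$.

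\textbf{Reduction to a polynomial.} On $pq<1$, multiply through by $1-pq>0$ and set
\[
 R(p,q)=pq(p+q-2pq)+(1-pq)\Bigl(\gamma(p)+\gamma(q)-\frac35f(p)f(q)\Bigr).
\]
The claim is equivalent to $R\ge0$ on $[0,1]^2$. Here $R$ is a symmetric polynomial of degree at most $7$ in each variable, since $\gamma$ has degree $6$ and $f(p)f(q)$ has bidegree $(2,2)$.

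Two sanity checks suggest where the tight directions lie:
\begin{itemize}
\item Near $p=q=1$, write $p=1-a$, $q=1-b$. Then $f=1-a^2$ and $w\approx 2a^2$, so $\gamma(p)\approx\frac{40}{50}\cdot 2a^2$. The term $pq(p+q-2pq)\approx a+b$ dominates, and $R$ is comfortably positive there.
\item At small $p,q$, we have $\gamma(p)\approx\frac{5}{50}\cdot 2p=p/5$, so $\gamma(p)+\gamma(q)\approx (p+q)/5$, while $\frac35f(p)f(q)\approx\frac{12}{5}pq$. This is fine since $pq\le (p+q)^2/4$ is of second order.
\end{itemize}
The delicate region is therefore the middle range, for example $p=q\approx1/2$. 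There $f=3/4$, $w=3/16$, and $\gamma\approx\frac{3/16}{50}(5+23.5-3)\approx0.0956$. Also $N\approx\frac{1/4\cdot 1/2}{3/4}=1/6$, so the left side is about $0.358$, while $\frac35f^2\approx0.3375$. The inequality is thin there, which is why an exact certificate is needed.

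\textbf{Certification.} Mirroring Lemma~\ref{lem:app-2k2}, I would expand $R$ in the tensor Bernstein basis $b_i(p)b_j(q)$ of degree $7$ (or elevated to degree $8$–$10$ if needed) and list the coefficient matrix, computed from the power coefficients by the analogue of \eqref{app:2k2-coefficients}. I would check that all entries are nonnegative.

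The main obstacle is that $R$ vanishes at the corners $(0,0)$, $(0,1)$ and $(1,0)$: for instance $R(0,1)=\gamma(1)=0$. Bernstein coefficients at these corners are then zero, and nearby coefficients may come out negative. If that happens, I would do one of the following:
\begin{itemize}
\item factor out the obvious vanishing, writing $R=pq\,R_1+(p+q)\cdot(\text{known nonnegative part})$ to exploit $\gamma(p)\ge p\,w(p)/10\cdot(\dots)$;
\item subtract an explicit square, as was done with $25(x+y-1)^2(x-y)^2(4+3xy)^2$ earlier;
\item raise the Bernstein degree until the coefficients become nonnegative.
\end{itemize}
As a fallback, I would use symmetry: substitute $s=p+q$ and $t=pq$, and verify monotonicity in $t$ for fixed $s$ on $t\le s^2/4$, as in the proof of \eqref{eq:c5-nonedge}, reducing to one-variable checks on the diagonal and the boundary.
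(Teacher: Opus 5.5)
Your reduction is correct and matches the paper's starting point: the paper's $F$ is $50R$, and your boundary cases are fine. The gap is that the whole content of the lemma is a nonnegativity certificate for $R$. You do not supply one, and your primary route provably cannot produce one.

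Near the corner $(1,0)$, write $a=1-p$ and $b=q$. The linear terms $b$ (from $pq(p+q-2pq)$) and $-b$ (from $(1-pq)(\gamma(q)-\tfrac35f(p)f(q))$) cancel, and
\[
 R=\tfrac45a^2-2ab+\tfrac{99}{50}b^2+O\bigl((a+b)^3\bigr).
\]
So $R$ and $\nabla R$ vanish at $(1,0)$. Work in the basis $p^i(1-p)^{n-i}q^j(1-q)^{n-j}$ of any bidegree $(n,n)$ with $n\ge7$. The coefficients with $(i,j)=(n,0),(n-1,0),(n,1)$ are then forced to be $0$, and the $(n-1,1)$ coefficient equals $\partial_a\partial_bR(1,0)=-2$. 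Hence no tensor Bernstein expansion of $R$ in $(p,q)$ has nonnegative coefficients. Degree elevation cannot repair this, subdividing the square does not help (the sub-box at $(1,0)$ has the same defect), and the same happens at $(0,1)$. The corner $(0,0)$ you worried about is harmless, since $R\approx\tfrac15(p+q)$ there.

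The missing idea is your ``subtract an explicit square'' fallback, made concrete. The paper subtracts
\[
 S=9(p-q)^2\bigl[(1-p)^2q^2\{3p-2(1-q)\}^2+(1-q)^2p^2\{3q-2(1-p)\}^2\bigr]\ge0 .
\]
Near $(1,0)$ this is $9(3b-2a)^2+O((a+b)^3)$, and it turns the local form $40a^2-100ab+99b^2$ of $50R$ into $4a^2+8ab+18b^2$. Even then the paper does not use a tensor basis. It sets $z=(p+q)/2$ and $d=(p-q)/2$, normalizes $d^2$ by $z^2$ or $(1-z)^2$ on the two halves, and expands in the degree-four Bernstein basis in that variable, with univariate coefficients in $z$.

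The real difficulty sits in the diagonal coefficient $2z(1-z)T(z)$, where $T(5/8)=3725/65536$. So the tight spot is $p=q\approx5/8$, where the original inequality has slack only about $9\times10^{-4}$. It is not $p=q\approx1/2$, where the slack is about $0.02$.

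Your $(s,t)$ monotonicity fallback is also unsupported:
\begin{itemize}
\item At $s=1$, $R$ is $0$ at $(1,0)$ and about $0.015$ at $(\tfrac12,\tfrac12)$.
\item At $s=\tfrac54$, $R$ is about $0.05$ at $(1,\tfrac14)$ and about $5\times10^{-4}$ at $(\tfrac58,\tfrac58)$.
\end{itemize}
So any monotonicity in $t$ would have to change direction with $s$, and would itself need proof.
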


\begin{proof}
Set
\[
 F(p,q)=50\left\{pq(p+q-2pq)
 -(1-pq)\left(\frac35 f(p)f(q)-\gamma(p)-\gamma(q)\right)\right\}
\]
and
\[
\begin{aligned}
 S(p,q)=9(p-q)^2\bigl[&(1-p)^2q^2\{3p-2(1-q)\}^2\\
 &+(1-q)^2p^2\{3q-2(1-p)\}^2\bigr].
\end{aligned}
\]
For $pq<1$, the left side minus the right side of the desired inequality
is $F(p,q)/[50(1-pq)]$. Since $S\geq0$, it suffices to prove
$R:=F-S\geq0$. Write $z=(p+q)/2$ and $d=(p-q)/2$.
Then $0\leq z\leq1$ and $|d|\leq\min(z,1-z)$.
Symmetry makes $R(z+d,z-d)$ a polynomial in $d^2$. We normalize
$d^2$ by $z^2$ for $z\leq1/2$ and by $(1-z)^2$ for $z\geq1/2$,
with the endpoints treated separately.

The Bernstein coefficients in these normalized variables are given
in~\eqref{app:nonedge-lower} and~\eqref{app:nonedge-upper} below.
They use $A_i=\mathcal B[\alpha_i]$ and $B_i=\mathcal B[\beta_i]$,
whose coefficient lists are as follows.
\begin{center}
\small
\renewcommand{\arraystretch}{1.65}
\begin{tabular}{cl}
\toprule
$\alpha_1$&$\left(40,\ \frac{725}{14},\ \frac{3967}{84},\ \frac{937}{28},\ \frac{1359}{70},\ \frac{1541}{168},\ \frac{1707}{448},\ \frac{181}{64}\right)$\\
$\alpha_2$&$\left(60,\ \frac{183}{2},\ \frac{2349}{28},\ \frac{15559}{280},\ \frac{547}{20},\ \frac{531}{56},\ \frac{307}{112},\ \frac{15}{8}\right)$\\
$\alpha_3$&$\left(40,\ \frac{983}{12},\ \frac{5429}{60},\ \frac{2871}{40},\ \frac{121}{3},\ \frac{305}{24},\ 4\right)$\\
$\alpha_4$&$\left(5,\ \frac{119}{6},\ \frac{137}{6},\ 2\right)$\\
$\beta_1$&$\left(\frac{181}{64},\ \frac{1249}{448},\ \frac{325}{96},\ \frac{1279}{280},\ \frac{1287}{140},\ \frac{905}{42},\ \frac{615}{14},\ 80\right)$\\
$\beta_2$&$\left(\frac{15}{8},\ \frac{17}{4},\ \frac{501}{56},\ \frac{7337}{560},\ \frac{3007}{140},\ \frac{555}{14},\ \frac{975}{14},\ 120\right)$\\
$\beta_3$&$\left(11,\ \frac{251}{12},\ \frac{1511}{60},\ \frac{2621}{80},\ \frac{263}{6},\ \frac{685}{12},\ 80\right)$\\
$\beta_4$&$\left(\frac92,\ \frac{19}{4},\ 5,\ \frac{35}{8},\ 5\right)$\\
\bottomrule
\end{tabular}
\end{center}
All entries in these lists are positive. Define
\begin{equation}\label{app:T}
 T(z)=12z^6-83z^5+133z^4+49z^3-120z^2+19z+10.
\end{equation}
For $0<z\leq1/2$, put $t=d^2/z^2$. Expansion in the degree-four Bernstein
basis gives
\begin{equation}\label{app:nonedge-lower}
 R(z+d,z-d)=\sum_{j=0}^4a_j(z)\beta_{4,j}(t),
\end{equation}
where
\[
\begin{aligned}
 a_0&=2z(1-z)T(z),& a_1&=\frac z2 A_1(2z),& a_2&=\frac z3 A_2(2z),\\
 a_3&=\frac{z(1-2z)}2A_3(2z),&
 a_4&=4z(1-2z)^2A_4(2z).
\end{aligned}
\]
For $1/2\leq z<1$, instead put $t=d^2/(1-z)^2$. The expansion is
\begin{equation}\label{app:nonedge-upper}
 R(z+d,z-d)=\sum_{j=0}^4b_j(z)\beta_{4,j}(t),
\end{equation}
where
\[
\begin{aligned}
 b_0&=2z(1-z)T(z),& b_1&=\frac{1-z}{2}B_1(2z-1),&
 b_2&=\frac{1-z}{3}B_2(2z-1),\\
 b_3&=\frac{(1-z)(2z-1)}2B_3(2z-1),&
 b_4&=8(1-z)(2z-1)^2B_4(2z-1).
\end{aligned}
\]
Both identities follow by substituting the displayed coefficient lists and
collecting powers of $d^2$. All factors are nonnegative on the respective
half-intervals, except that the sign of $T$ remains to be checked.

Put $y=z-5/8$, so that $-5/8\leq y\leq3/8$. Expansion gives
\[
 T(z)=12y^6-38y^5-\frac{897}{16}y^4+\frac{927}{8}y^3
 +\frac{111025}{1024}y^2-\frac{313}{2048}y+\frac{3725}{65536}.
\]
On this interval, $y^3\geq-5y^2/8$, $y^4\leq25y^2/64$, and
$y^5\leq27y^2/512$. Therefore
\[
\begin{aligned}
 T(z)&\geq\left(\frac{111025}{1024}-\frac{4635}{64}
 -\frac{22425}{1024}-\frac{513}{256}\right)y^2
 -\frac{313}{2048}y+\frac{3725}{65536}\\
 &=\frac{3097}{256}y^2-\frac{313}{2048}y+\frac{3725}{65536}\\
 &\geq11y^2+\left(y-\frac{313}{4096}\right)^2
 +\frac{855631}{16777216}>0.
\end{aligned}
\]
Thus \eqref{app:nonedge-lower} and \eqref{app:nonedge-upper} prove $R\geq0$.
At $z=0$ or $z=1$, we have $d=0$ and $R=0$ directly. It follows that
$F=S+R\geq0$ everywhere. For $pq<1$, divide by $50(1-pq)$ to obtain
the desired inequality. At $p=q=1$, it reads $1\geq3/5$.
\end{proof}

\section{An edge inequality for the four-cycle}\label{app:four-cycle-edge}

We first prove a polynomial inequality on the whole square. It gives
\eqref{eq:c4-edge} on the region where the positive part in $E(p,q)$ is
active. A concavity argument then treats the remaining region.

\begin{lemma}\label{lem:app-c4-polynomial}
For $x,y\in[0,1]$, let $p=x^2$, $q=y^2$, and put
\[
 A=(1-p)(1-q),\quad u=pq,\quad g=A-xy,\quad
 H=h(p)+h(q)+\frac1{200}-\frac35A^2.
\]
Then the polynomial
\begin{equation}\label{app:edge-P}
 P(x,y)=(1-u)g^2+AH
\end{equation}
is nonnegative on $[0,1]^2$.
\end{lemma}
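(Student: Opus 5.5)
We will prove nonnegativity of $P(x,y)$ by a certificate of the same shape as Lemma~\ref{lem:app-2k2}. The polynomial is explicit, symmetric in $x,y$, and of modest degree: $g^2$ has degree $8$ in each variable after multiplication by $1-u=1-x^2y^2$. We expect degree about $10$ in each of $x$ and $y$, and $AH$ contributes up to degree $8$ in each variable. The plan is to write
\[
 P(x,y)=\text{(explicit squares)}+\sum_{i,j}A_{ij}b_i(x)b_j(y),
\]
with $b_i(t)=t^i(1-t)^{N-i}$ the unnormalized Bernstein basis and $A$ a symmetric matrix with nonnegative entries. The entries of $A$ are computed from the power coefficients exactly as in~\eqref{app:2k2-coefficients}.

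The first step is to locate the zeros and near-zeros of $P$ on $[0,1]^2$. These force either zero Bernstein coefficients or the need to subtract a square beforehand.

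At $x=y=1$ we have $A=0$ and $g=-1$, so $(1-u)g^2=0$ and $P(1,1)=0$. Near this corner both terms vanish to second order. Writing $x=1-s$ and $y=1-r$, we get $1-u\approx 2(s+r)$ and $A\approx 4sr$, and $H\approx 1/200$ there since $h(1)=0$ and $A^2\to0$. Hence $P\approx 2(s+r)+sr/50$ to leading order, which is positive, and the corner coefficient $A_{NN}=0$ is harmless.

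Along the curve $g=0$ we have $P=AH$. Here we need $H\ge0$, and the slack $1/200$ is what saves the situation near $p=q$ with $f(p)\approx 1/2$. This is the most delicate region, because both terms are small simultaneously.

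At $x=y=0$ we have $g=1$ and $A=1$, so $P=1+1/200-3/5>0$.

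I would check these regions numerically first, for example by sampling $P$ on a grid. The aim is to confirm a positive minimum away from $(1,1)$ and to see whether the minimum along the diagonal $x=y$ is tight.

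The second step is to choose the subtracted square. By symmetry, the natural candidate is a multiple of $(x-y)^2$ times a square, or a square vanishing along the critical diagonal points. We remove it only if the raw Bernstein coefficients are negative. If degree-$N$ Bernstein coefficients fail, I would elevate the degree (say to $N=12$ or $14$), since degree elevation converges to the true minimum when $P>0$ in the interior. Near the corner $(1,1)$ I would split the square into subboxes $[0,\tfrac12]$ and $[\tfrac12,1]$ in each variable and recompute the Bernstein coefficients on each box by de Casteljau subdivision. Subdivision also converges, except at true zeros on the boundary, where the vanishing order must match.

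The main obstacle is the corner $(1,1)$, where $P$ has an exact zero, together with possible tight near-zeros along the diagonal. In the Bernstein expansion on the box touching $(1,1)$, the coefficients adjacent to the corner must be nonnegative. This requires the linear term $2(s+r)$ to dominate. Its coefficient is the directional derivative at the corner, which is positive, so after sufficient subdivision those coefficients should be positive. If a genuinely negative coefficient persists near the diagonal, I would subtract a term $c(x-y)^2\,\pi(x,y)^2$ tuned to the near-zero, as in~\eqref{app:2k2-identity}, and then recheck. The final certificate is a finite table of nonnegative rational coefficients, verified by exact arithmetic.
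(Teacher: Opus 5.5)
\textbf{There is a genuine gap: no certificate is given, and the planned computation would fail because of zeros you missed.} Your proposal describes how to search for a certificate but does not supply one, and for this lemma the explicit verification is the entire proof. More importantly, the search as you set it up cannot succeed at the degrees you propose, because you have misidentified where $P$ vanishes.

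\textbf{The missed zeros.} Besides $(1,1)$, $P$ vanishes at $(1,0)$ and $(0,1)$: there $A=0$ and $g=-xy=0$. These corners, not $(1,1)$, are the real obstruction. Put $x=1-s$, $y=r$. Then $g=2s-r+O(|(s,r)|^2)$, $1-u=1+O(r^2)$ and $AH=s/100+O(|(s,r)|^2)$, so
\[
 P=\frac{s}{100}+(2s-r)^2+O\bigl(|(s,r)|^3\bigr).
\]
The only first-order term is the small $s/100$ produced by the slack $A/200$. There is no first-order term in $r$, and the quadratic part carries the cross term $-4sr$.

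\textbf{Why low-degree Bernstein certificates fail.} Read off the normalized bidegree-$(N,N)$ Bernstein coefficients of $P$ at this corner. They are $c_{N,0}=c_{N,1}=0$, $c_{N-1,0}=1/(100N)$, and
\[
 c_{N-1,1}=\frac1{100N}-\frac4{N^2},
\]
which is negative for $N<400$. So no certificate of degree $10$--$14$ exists. Splitting the square at $1/2$ only halves the required degree. Subdivision would have to be concentrated at $(1,0)$ and $(0,1)$, which your plan never targets. The corner $(1,1)$ that you single out is harmless. There $(1-u)g^2\approx 2(s+r)$ vanishes only to first order, with slope $2$ in both directions; it does not vanish to second order as you state.

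\textbf{The missing idea.} The paper first subtracts
\[
 (p-q)^2g^2=(x-y)^2\bigl((x+y)g\bigr)^2 .
\]
This square does lie in your family $c(x-y)^2\pi^2$. However, the right choice $\pi=(x+y)g$ is dictated by the corner $(1,0)$: there the square equals $(2s-r)^2$ to leading order and removes exactly the offending quadratic. Tuning $\pi$ to diagonal near-zeros would not lead you to it.

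\textbf{How the paper finishes.} It expands the remainder $R=[1-pq-(p-q)^2]g^2+AH$ in $m=(x+y)/2$ and $d^2$, where $d=(x-y)/2$. It normalizes $t=d^2/m^2$ for $m\le 1/2$ and $t=d^2/(1-m)^2$ for $m\ge 1/2$. In these coordinates both corners $(1,0)$ and $(0,1)$ become the single point $m=1/2$, $t=1$. Their zero shows up as explicit factors: $1-v$ in $B_8^-$, and $r^4$ in $B_8^+=r^4(1-r)(1+r)$. The factor $1-r$ in each $\kappa_i$ accounts for $(1,1)$. Any repaired version of your approach must build in this boundary behaviour at $(1,0)$ and $(0,1)$ explicitly.
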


\begin{proof}
We separate the nonnegative square $(p-q)^2g^2$ and prove nonnegativity
of the remaining polynomial through its Bernstein representations. Write
\begin{equation}\label{app:edge-R}
 R(x,y)=P(x,y)-(p-q)^2g^2
 =\bigl[1-pq-(p-q)^2\bigr]g^2+AH.
\end{equation}
It suffices to prove $R\geq0$. Set $m=(x+y)/2$ and $d=(x-y)/2$, so
$|d|\leq\min(m,1-m)$. Symmetry in $x,y$ gives an even expansion
\begin{equation}\label{app:edge-power}
 R(m+d,m-d)=\sum_{j=0}^8c_j(m)d^{2j}.
\end{equation}
The polynomials $c_j$ are the coefficients obtained by expanding
\eqref{app:edge-R} after this substitution.
The maximum admissible value of $d^2$ changes from $m^2$ to $(1-m)^2$
at $m=1/2$. We therefore normalize $d^2$ separately on these two halves
and then treat the resulting Bernstein coefficients as polynomials in
one variable.

\medskip\noindent\textit{The lower half, $0<m\leq1/2$.}
Put $t=d^2/m^2$ and $v=4m^2$, both in $[0,1]$. Conversion to the degree-eight
Bernstein basis yields
\begin{equation}\label{app:edge-lower}
 R(m+d,m-d)=\sum_{i=0}^8 B_i^-(v)\beta_{8,i}(t),
\end{equation}
where
\begin{equation}\label{app:edge-lower-coeff}
 B_i^-(4m^2)=\sum_{j=0}^i\frac{\binom ij}{\binom8j}c_j(m)m^{2j}.
\end{equation}
Formula~\eqref{app:edge-lower-coeff} converts the power coefficients
$c_j(m)$ into the Bernstein coefficients $B_i^-(v)$ in the variable $t$.
The table gives a second Bernstein representation, now in $v$, for each
of these functions. For $0\leq i\leq7$, its meaning is
\[
 B_i^-(v)=\frac1{D_i}\sum_{j=0}^{n_i}A_{ij}\beta_{n_i,j}(v),
\]
and the last row means
\[
 B_8^-(v)=\frac{1-v}{6000}\sum_{j=0}^6 A_{8j}\beta_{6,j}(v).
\]
\begin{center}
\small
\setlength{\tabcolsep}{4pt}
\begin{tabular}{crrl}
\toprule
$i$&$n_i$&$D_i$&$(A_{i0},\ldots,A_{in_i})$\\
\midrule
0&8&45875200&\begin{tabular}[c]{@{}l@{}}$(18579456,\ 17131520,\ 14948864,\ 12442880,\ 9907200,$\\$\quad7542336,\ 5475800,\ 3778537,\ 2478336)$\end{tabular}\\[4pt]
1&8&1835008000&\begin{tabular}[c]{@{}l@{}}$(743178240,\ 706693120,\ 609745920,\ 487788800,\ 364035072,$\\$\quad252767680,\ 161778480,\ 94170895,\ 49698040)$\end{tabular}\\[4pt]
2&8&3211264000&\begin{tabular}[c]{@{}l@{}}$(1300561920,\ 1274219520,\ 1083805440,\ 832055040,\ 581647872,$\\$\quad367606240,\ 205835260,\ 98996905,\ 40557720)$\end{tabular}\\[4pt]
3&8&802816000&\begin{tabular}[c]{@{}l@{}}$(325140480,\ 327931520,\ 274167680,\ 201697440,\ 131949184,$\\$\quad75674900,\ 36685940,\ 14224735,\ 4428200)$\end{tabular}\\[4pt]
4&8&100352000&\begin{tabular}[c]{@{}l@{}}$(40642560,\ 42163520,\ 34551600,\ 24323400,\ 14873248,$\\$\quad7703568,\ 3180338,\ 965069,\ 238448)$\end{tabular}\\[4pt]
5&8&10035200&\begin{tabular}[c]{@{}l@{}}$(4064256,\ 4333560,\ 3471084,\ 2335145,\ 1332588,$\\$\quad618470,\ 210944,\ 48342,\ 11424)$\end{tabular}\\[4pt]
6&8&12544000&\begin{tabular}[c]{@{}l@{}}$(5080320,\ 5563460,\ 4343585,\ 2789050,\ 1482386,$\\$\quad608380,\ 160755,\ 27055,\ 7980)$\end{tabular}\\[4pt]
7&7&336000&\begin{tabular}[c]{@{}l@{}}$(136080,\ 155355,\ 102970,\ 51909,\ 18252,$\\$\quad2755,\ 330,\ 105)$\end{tabular}\\[4pt]
8&6&6000&\begin{tabular}[c]{@{}l@{}}$(2430,\ 3330,\ 2514,\ 1479,\ 590,$\\$\quad30,\ 30)$\end{tabular}\\
\bottomrule
\end{tabular}
\end{center}
The table identities can be checked by substituting $v=4m^2$ and
comparing coefficients in $m$ with~\eqref{app:edge-lower-coeff}.
Every listed integer is positive, and
$1-v\geq0$. Thus $B_i^-(v)\geq0$ for every $i$, proving $R\geq0$ on
the lower half. Continuity includes $m=0$.

\medskip\noindent\textit{The upper half, $1/2\leq m<1$.}
Put $r=2m-1$ and $t=d^2/(1-m)^2$. Again $r,t\in[0,1]$, and
\begin{equation}\label{app:edge-upper}
 R(m+d,m-d)=\sum_{i=0}^8 B_i^+(r)\beta_{8,i}(t),
\end{equation}
where
\begin{equation}\label{app:edge-upper-coeff}
 B_i^+(r)=\sum_{j=0}^i\frac{\binom ij}{\binom8j}
 c_j\left(\frac{1+r}{2}\right)\left(\frac{1-r}{2}\right)^{2j}.
\end{equation}
We prove that all nine coefficients are nonnegative. For $0\leq i\leq7$,
they factor as
\begin{equation}\label{app:edge-upper-factor}
 B_i^+(r)=\kappa_i(r)P_i(r),\qquad
 P_i(r)=a_i+b_ir+e_ir^2+r^3G_i(r),
\end{equation}
with the following factors and coefficients.
\begin{center}
\small
\renewcommand{\arraystretch}{1.6}
\begin{tabular}{ccrrr}
\toprule
$i$&$\kappa_i(r)$&$a_i$&$b_i$&$e_i$\\
\midrule
0&$\frac{(1-r)(r+3)}{819200}$&14752&-113994&131715\\
1&$\frac{1-r}{6553600}$&177493&-1657593&3190878\\
2&$\frac{1-r}{11468800}$&144849&-1868423&6597426\\
3&$\frac{1-r}{2867200}$&15815&-299220&1706788\\
4&$\frac{1-r}{1792000}$&4258&-113326&987502\\
5&$\frac{1-r}{179200}$&204&-5908&81596\\
6&$\frac{1-r}{89600}$&57&-1043&28967\\
7&$\frac{1-r}{3200}$&1&0&539\\
\bottomrule
\end{tabular}
\end{center}
For the remaining polynomials, partial sums of the power coefficients
give a useful lower bound. If $G(r)=\sum_{j=0}^n g_jr^j$ and
$s_j=\sum_{\ell=0}^j g_\ell$, then $G=\mathcal A[s]$, where
\begin{equation}\label{app:partial-sums}
 \mathcal A[s](r)=(1-r)\sum_{j=0}^{n-1}s_jr^j+s_nr^n.
\end{equation}
The weights in this expression are nonnegative and sum to one on $[0,1]$.
Hence $\mathcal A[s](r)\geq\min_j s_j$. The remainders $G_i$ for
$1\leq i\leq7$ satisfy $G_i=\mathcal A[\sigma_i]$ for the lists below.
\begin{center}
\small
\begin{tabular}{cl}
\toprule
$\sigma_1$&\begin{tabular}[c]{@{}l@{}}$(5148306,\ 12644441,\ 16576334,\ 12792506,\ 10014974,$\\$\quad10880785,\ 11640466,\ 11528336,\ 11389634,\ 11381875,$\\$\quad11393302,\ 11396206,\ 11396422)$\end{tabular}\\[4pt]
$\sigma_2$&\begin{tabular}[c]{@{}l@{}}$(3958762,\ 13173549,\ 27548320,\ 24172072,\ 15105136,$\\$\quad15789303,\ 18612806,\ 18640464,\ 18094354,\ 18006679,$\\$\quad18048124,\ 18062488,\ 18063748)$\end{tabular}\\[4pt]
$\sigma_3$&\begin{tabular}[c]{@{}l@{}}$(376774,\ 1682156,\ 6224315,\ 6695595,\ 3754395,$\\$\quad3332162,\ 4381720,\ 4562664,\ 4345194,\ 4287246,$\\$\quad4303205,\ 4310345,\ 4311017)$\end{tabular}\\[4pt]
$\sigma_4$&\begin{tabular}[c]{@{}l@{}}$(81094,\ 522369,\ 3420268,\ 4479568,\ 2651124,$\\$\quad1903484,\ 2633476,\ 2923786,\ 2755352,\ 2684659,$\\$\quad2697550,\ 2704918,\ 2705566)$\end{tabular}\\[4pt]
$\sigma_5$&\begin{tabular}[c]{@{}l@{}}$(3184,\ 43659,\ 305434,\ 452710,\ 311674,$\\$\quad199910,\ 259242,\ 306812,\ 291218,\ 280005,$\\$\quad281356,\ 282436,\ 282508)$\end{tabular}\\[4pt]
$\sigma_6$&\begin{tabular}[c]{@{}l@{}}$(715,\ 34893,\ 146047,\ 215967,\ 176879,$\\$\quad118955,\ 133511,\ 161483,\ 157103,\ 150041,$\\$\quad150499,\ 151195,\ 151219)$\end{tabular}\\[4pt]
$\sigma_7$&\begin{tabular}[c]{@{}l@{}}$(0,\ 2122,\ 5522,\ 7058,\ 6658,$\\$\quad5294,\ 5294,\ 6050,\ 6050,\ 5836,$\\$\quad5836,\ 5860)$\end{tabular}\\
\bottomrule
\end{tabular}
\end{center}
These lists give the partial sums of the power coefficients of $G_i$.
In particular, $G_i\geq0$ for $1\leq i\leq7$,
and $G_1\geq5148306$. For $2\leq i\leq7$, $a_i>0$, $e_i>0$, and the
values of $4a_ie_i-b_i^2$ are, respectively,
\[
 331517727767,\quad18438800480,\quad3976351788,\quad
 31677872,\quad5516627,\quad2156.
\]
Thus $a_i+b_ir+e_ir^2>0$, which proves $P_i>0$ for these indices.

The remaining polynomial $G_0$ has the degree-eleven Bernstein expansion
\[
 G_0(r)=\sum_{j=0}^{11}\rho_j\beta_{11,j}(r),
\]
where
\[
\begin{aligned}
 (\rho_0,\ldots,\rho_{11})=\Bigl(&388624,\ \frac{4467478}{11},\ \frac{23181626}{55},\ \frac{71609161}{165},\\
 &\frac{73096544}{165},\ \frac{103473386}{231},\ \frac{103557439}{231},\ \frac{146447809}{330},\\
 &\frac{71650456}{165},\ \frac{23086566}{55},\ \frac{4406238}{11},\ 377127\Bigr).
\end{aligned}
\]
Every coefficient is at least $377127$, so $G_0\geq377127$. Consequently,
\[
\begin{aligned}
 P_0(r)&\geq14752-113994r+131715r^2+377127r^3,\\
 P_1(r)&\geq177493-1657593r+3190878r^2+5148306r^3.
\end{aligned}
\]
For the first bound, put $t=r-2/9$. Since $t\geq-2/9$, we have
$t^3\geq-(2/9)t^2$. Expansion therefore gives
\[
 P_0(r)\geq299327t^2+\frac{1250}{3}t+\frac{5104}{81}>0,
\]
because
\[
 4(299327)\frac{5104}{81}-\left(\frac{1250}{3}\right)^2
 =\frac{6096997532}{81}>0.
\]
For the second, use $t=r-9/50$ and $t^3\geq-(9/50)t^2$ to obtain
\[
 P_1(r)\geq\frac{126106704}{25}t^2-\frac{10576971}{1250}t
 +\frac{783476737}{62500}>0.
\]
This quadratic $at^2+bt+c$ has $a>0$ and
$4ac-b^2=395094803539444551/1562500>0$, so it is positive. Finally,
\[
 B_8^+(r)=r^4(1-r)(1+r)\geq0.
\]
All factors $\kappa_i(r)$ are nonnegative, so every $B_i^+$ is nonnegative.
Equation \eqref{app:edge-upper} proves $R\geq0$ on the upper half,
including $m=1$ by continuity. Restoring the subtracted square in
\eqref{app:edge-R} proves the lemma.
\end{proof}

\begin{lemma}\label{lem:app-c4-zero}
Let $A=(1-p)(1-q)$, $g=A-\sqrt{pq}$, and
\[
 H(p,q)=h(p)+h(q)+\frac1{200}-\frac35A^2.
\]
If $H\geq0$ whenever $g=0$, then $H\geq0$ throughout the region
$g\leq0$ in $[0,1]^2$.
\end{lemma}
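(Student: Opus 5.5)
We would prove this by fixing one variable and arguing along vertical segments. Since $H$ and $g$ are symmetric in $p,q$, we fix $p\in[0,1]$ and regard $g$ and $H$ as functions of $q$. For fixed $p$, $A=(1-p)(1-q)$ is nonincreasing in $q$ and $\sqrt{pq}$ is nondecreasing, so $g(p,\cdot)$ is nonincreasing. Hence the slice $\{q:g(p,q)\le0\}$ is an interval $[q^*(p),1]$, and it is nonempty because $g(p,1)=-\sqrt p\le0$. If $p=0$, then $g=1-q>0$ for $q<1$, so only $q=1$ occurs and $A=0$ there. If $p=1$, then $A=0$ throughout. In both cases $H\ge 1/200>0$, so we may assume $0<p<1$. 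At the left endpoint we have $g(p,q^*)=0$, so the hypothesis gives $H(p,q^*)\ge0$. At the right endpoint $q=1$ we have $A=0$ and $h(1)=0$, so $H(p,1)=h(p)+1/200>0$.

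If $q\mapsto H(p,q)$ were concave on $[q^*,1]$, its minimum would occur at an endpoint and we would be done. Write
\[
 H(p,q)=h(q)-\tfrac35(1-p)^2(1-q)^2+\mathrm{const}(p).
\]
The second term is concave in $q$, with second derivative $-\tfrac65(1-p)^2$. So concavity reduces to the one-variable inequality $h''(q)\le\frac65(1-p)^2$ on the relevant range of $q$. We do not expect this to hold on all of $[q^*,1]$: with $s=1-q$ one has $h\approx c\,s^3$ near $q=1$, which is convex there. We therefore split the slice at a threshold.

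\emph{Near $q=1$.} Because $h\ge0$, we have $H\ge \frac1{200}-\frac35A^2$. This is nonnegative as soon as $A^2\le 1/120$, that is, $(1-p)(1-q)\le 1/\sqrt{120}$. So $H\ge0$ automatically on the part of the slice where $1-q\le 1/(\sqrt{120}\,(1-p))$.

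\emph{Away from $q=1$.} On the remaining part $[q^*,q_1]$, where $q_1=\max\{q^*,\,1-1/(\sqrt{120}(1-p))\}$, we would prove the concavity inequality
\[
 h''(q)\le\tfrac65(1-p)^2 .
\]
Because $q\ge q^*$, the defining relation $g\le0$ gives $(1-p)^2(1-q)^2\le pq$. Together with $(1-p)(1-q)\ge 1/\sqrt{120}$ on this part, this confines $(p,q)$ to a compact region bounded away from $q=1$ and from $p=1$. Here $h''$ is an explicit quartic-type polynomial in $q$, and we would bound it directly. A possible fallback is to replace $(1-p)^2$ by its minimum over the region, which yields a one-variable polynomial inequality in $q$ to be checked by a Bernstein expansion as in the appendices.

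Given concavity on $[q^*,q_1]$, $H(p,\cdot)$ is bounded below there by $\min\{H(p,q^*),H(p,q_1)\}$. The first value is nonnegative by hypothesis. The second is nonnegative by the near-$q=1$ estimate, since at $q_1$ we have $A^2\le1/120$. Combined with the near-$q=1$ part, this gives $H\ge0$ on the whole slice, and hence on the region $g\le0$.

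\emph{Expected obstacle.} The main difficulty is verifying the concavity inequality uniformly in $p$ on the truncated interval, in particular for $p$ near $1$, where $\frac65(1-p)^2$ is small. If the direct bound fails there, we would first try enlarging the near-$q=1$ zone by using the positive term $h(p)$: since $h(p)\approx w(p)(1-p)\cdot\frac{23}{100}$ with $w(p)\approx 2(1-p)^2$ near $p=1$, this gives $H\ge \frac1{200}+h(p)-\frac35A^2$ with a larger admissible range of $A$. The hope is that the convex part of $h$ is then absorbed into this enlarged zone.
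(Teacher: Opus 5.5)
Your slicing setup is sound: for fixed $p$ the set $\{q:g\le0\}$ is an interval $[q^*,1]$, both endpoint values of $H$ are nonnegative, and the zone $A^2\le 1/120$ is handled correctly. The gap is the concavity step on the remaining piece $[q^*,q_1]$, which is false for a whole range of $p$.

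With $s=1-q$ one has $100h=23s^3+12s^4-23s^5-12s^6$, so $100h''=138s+144s^2-460s^3-360s^4$. This vanishes at $s=0$ and peaks at about $0.41$ near $s\approx0.36$. So, contrary to your diagnosis, the slice is concave near $q=1$ whenever $p<1$. The trouble is the middle range of $q$, and your truncation removes the harmless end while keeping the bad part.

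Concretely, take $p=0.7$. The truncated piece is $1-q\in[0.304,0.898]$. At $1-q=0.35$ we have $g<0$ and $A=0.105>1/\sqrt{120}$, yet $h''\approx0.408$ while $\tfrac65(1-p)^2=0.108$. The same failure occurs for all $p$ in roughly $(0.42,0.84)$.

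Your fallback does not rescue this. At $p=0.7$, adding $h(p)\approx0.0065$ only enlarges the safe zone to $A\le0.139$, i.e.\ $1-q\le0.46$. There $h''\approx0.33$, still far above $0.108$. So along vertical segments you have neither concavity nor any other mechanism that places the minimum at the endpoints.

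The missing idea is the choice of direction. The paper slices along lines $p+q=2z$ with parameter $\delta=(p-q)^2/4$. Then $pq=z^2-\delta$ and $A=(1-z)^2-\delta$ are affine in $\delta$, and $h(p)+h(q)$ is a cubic in $\delta$. This gives $\partial_\delta^2(100H)=-144\delta-4(180z^2-475z+313)<0$ on the entire square, so concavity is global and no truncation is needed.

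The region $g\le0$ meets each such line in an interval $[0,\delta_*]$, or $[0,(1-z)^2]$ when $z\ge1/2$; it is nonempty only for $z\ge(3-\sqrt5)/2$. The endpoints are then:
\begin{itemize}
\item the diagonal $p=q$, where $H-\tfrac1{200}=(1-z)^3C(z)/50\ge0$ is an explicit one-variable check;
\item either the curve $g=0$, covered by the hypothesis, or a point with one coordinate equal to one, where $A=0$.
\end{itemize}
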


\begin{proof}
Put
\[
 z=\frac{p+q}{2},\qquad\delta=\frac{(p-q)^2}{4},\qquad
 c_0=\frac{3-\sqrt5}{2}.
\]
The constraints $p,q\in[0,1]$ are equivalent to $0\leq z\leq1$ and
$0\leq\delta\leq\min\{z^2,(1-z)^2\}$. Moreover,
\[
 pq=z^2-\delta,\qquad A=(1-z)^2-\delta.
\]
If $z<c_0$, let $s=\sqrt{pq}\leq z<1/2$. Since $s^2-s$ is decreasing
on $[0,1/2]$,
\[
 g=1-2z+s^2-s\geq1-3z+z^2>0.
\]
Thus $g\leq0$ requires $z\geq c_0$.

For fixed $z\geq c_0$, define
\[
 F_z(\delta)=z^2-\delta-\bigl((1-z)^2-\delta\bigr)^2.
\]
Since $A\geq0$ on the admissible interval, $g\leq0$ is equivalent to
$F_z(\delta)\geq0$. We have $F_z(0)\geq0$ and
\[
 F_z'(\delta)=2\bigl((1-z)^2-\delta\bigr)-1
 \leq2(1-c_0)^2-1<0.
\]
Call an interval inactive when $g\leq0$, so that $E(p,q)=0$ there.
If $c_0\leq z\leq1/2$, then $F_z(z^2)=-(1-2z)^2\leq0$. The inactive
interval is therefore $[0,\delta_*]$, where $F_z(\delta_*)=0$, or
equivalently $g=0$. If $z\geq1/2$, then
$F_z((1-z)^2)=2z-1\geq0$; the entire admissible interval
$[0,(1-z)^2]$ is inactive, and one coordinate equals one at its right
endpoint.

Set $H_0(z,\delta)=h(p)+h(q)-3A^2/5$. Direct expansion gives
\begin{equation}\label{app:edge-concavity}
\begin{aligned}
 100H_0(z,\delta)={}&-24\delta^3-2(180z^2-475z+313)\delta^2\\
 &-2(z-1)(180z^3-770z^2+868z-209)\delta\\
 &-2(z-1)^3(12z^3-59z^2+100z-30).
\end{aligned}
\end{equation}
The polynomial $A_0(z)=180z^2-475z+313$ decreases on $[0,1]$ and is at
least $A_0(1)=18$. Hence
\[
 \frac{\partial^2}{\partial\delta^2}(100H_0)
 =-144\delta-4A_0(z)<0.
\]
Thus $H_0+1/200$ is concave in $\delta$, and it suffices to check the
endpoints of each inactive interval.

At $\delta=0$,
\[
 H_0(z,0)=\frac{(1-z)^3}{50}C(z),\qquad
 C(z)=12z^3-59z^2+100z-30.
\]
The derivative $C'(z)=36z^2-118z+100$ decreases on $[0,1]$ and is at
least $C'(1)=18$. Also $c_0^2=3c_0-1$ gives
$C(c_0)=19c_0-7>0$, since $c_0>3/8$. Therefore $H_0(z,0)\geq0$ for
$z\geq c_0$. At the other endpoint, the boundary hypothesis applies when
$z\leq1/2$. When $z\geq1/2$, one coordinate equals one, so $A=0$ and
$H_0$ is $h$ of the other coordinate, which is nonnegative. Both endpoints
have $H_0+1/200\geq0$. Concavity completes the proof.
\end{proof}

\section{Equality in the auxiliary estimates}\label{app:auxiliary-equality}\label{subsec:intermediate-equality}
We use the notation of Section~\ref{sec:split} and the two cases in
the proof of Theorem~\ref{thm:split}. For the vanishing conditions
on $L_H,L_Y$ and $Z_\mu$, see~\eqref{eq:equality-zero-squares} and
the proof of Theorem~\ref{thm:split-equality}.

\paragraph{The inverse trace estimates.}
In Case~1, put $u_i=r_i^{-1}$, $\sigma=\sum_i u_i$, and
$t_{ih}=(BB^{\mathsf T})_{ih}$. The diagonal of
$BB^{\mathsf T}+I_c-M$ is zero and its off-diagonal entries are
$t_{ih}-1$. Formula~\eqref{eq:inverse} therefore gives
\begin{equation}\label{eq:equality-universal-trace}
 c-\kappa=\frac{2}{1+\sigma}
                  \sum_{i<h}u_i u_h(t_{ih}-1).
\end{equation}
Every $t_{ih}\ge1$, and every coefficient is positive.
Consequently, $\kappa=c$ if and only if every two distinct clique
vertices have exactly one common independent neighbor. For $c=1$
the condition is vacuous. Sharpness of this trace estimate alone
allows private leaves; the degree loss and the coupling square
exclude those leaves when equality in the whole argument is required
and $c\ge2$.

In Case~2, the inverse formula gives
\begin{equation}\label{eq:auxiliary-proper-kappa}
 \kappa=c-\frac{\sigma}{1+\sigma}
 -\frac{2}{1+\sigma}\sum_{i<h}u_i u_h(BB^{\mathsf T})_{ih}.
\end{equation}
Thus $\kappa=c-\sigma/(1+\sigma)$ if and only if $BB^{\mathsf T}$ is
diagonal. This is equivalent to every independent vertex having
degree at most one: an off-diagonal entry counts independent vertices
adjacent to two specified clique vertices. Since all $r_i\ge1$,
sharpness forces $B$ to have full row rank. Its incompatibility with
$L_Y=0$ at $k=c-1$ is the contradiction in Case~2 of the proof of
Theorem~\ref{thm:split-equality}.

\begin{proposition}\label{prop:scalar-equality}
Under the hypotheses of Lemma~\ref{lem:scalarM},
\[
 \phi(\theta)=\frac1{1+\sigma}
 \quad\Longleftrightarrow\quad c=2\ \text{and}\ d_1=d_2=2.
\]
\end{proposition}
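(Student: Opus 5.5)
We retrace the proof of Lemma~\ref{lem:scalarM} and track exactly where its inequalities can be tight. That proof splits into $\sigma\le1$ and $\sigma\ge1$. Since $d_i\ge2$ gives $u_i\le1/2$ and $\sigma\le c/2$, we will treat the cases by the size of $\sigma$.

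\emph{Case $\sigma<1$.} The proof gives $\phi(\theta)\ge\phi(1+\sigma^{-1})=1/(\sigma(1+\sigma))$, and this strictly exceeds $1/(1+\sigma)$ when $\sigma<1$. So equality is impossible here.

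\emph{Case $\sigma>1$.} The proof gives $\phi(\theta)\ge\phi(2)=1/2>1/(1+\sigma)$. Again equality is impossible.

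\emph{Case $\sigma=1$.} Both bounds collapse to $1/2$. Since $\phi$ is strictly increasing on $[1,\infty)$, equality holds if and only if $\theta=2$, that is, $\lambda_{\min}(M)=2$.

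To identify when $\theta=2$, write $M-2I_c=\diag(d_i-2)+J_c$, which is positive semidefinite.
\begin{itemize}
\item Suppose a nonzero $x$ satisfies $x^{\mathsf T}(M-2I_c)x=0$. Then $\sum_i(d_i-2)x_i^2=0$ and $\one^{\mathsf T}x=0$.
\item So $x$ is supported on the set $S=\{i:d_i=2\}$ and has coordinate sum zero. Such an $x$ exists if and only if $|S|\ge2$.
\item If $|S|\ge2$, then $\sigma\ge\sum_{i\in S}u_i=|S|/2\ge1$.
\end{itemize}
Combining this with $\sigma=1$ forces $|S|=2$ and no other indices, because every remaining $u_i$ would be positive and push $\sigma$ above $1$. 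Hence $c=2$ and $d_1=d_2=2$.

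For the converse, take $c=2$ and $d_1=d_2=2$. Then $M=2I_2+J_2$ has eigenvalues $2$ and $4$, so $\theta=2$ and $\phi(2)=1/2$. Also $\sigma=1$, so $1/(1+\sigma)=1/2$, and equality holds.

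The only delicate point is that equality in the case analysis requires $\sigma=1$ exactly and also $\theta=2$; the row-sum bound itself plays no further role. The kernel argument above settles this cleanly, so no step looks hard.
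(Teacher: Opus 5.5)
Your proof is correct and follows the paper's argument essentially step for step. You use the same split $\sigma<1$, $\sigma>1$, $\sigma=1$ taken from the proof of Lemma~\ref{lem:scalarM}, and strict monotonicity of $\phi$ together with $\theta\ge2$ to force $\theta=2$. You then use the same quadratic-form kernel argument for $M-2I_c$, which produces at least two indices with $d_i=2$ that already exhaust $\sigma=1$, and you check the converse directly with $M=2I_2+J_2$, as the paper does.
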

\begin{proof}
If $\sigma<1$, the proof of Lemma~\ref{lem:scalarM} gives
\[
 \phi(\theta)\ge\frac1{\sigma(1+\sigma)}
                   >\frac1{1+\sigma}.
\]
If $\sigma>1$, it instead gives
$\phi(\theta)\ge1/2>1/(1+\sigma)$.
Thus equality requires $\sigma=1$.
Since $\theta\ge2$ and $\phi$ is strictly increasing on $[1,\infty)$,
equality then forces $\theta=2$.

Choose a nonzero vector $z\in\Ker(M-2I_c)$. Then
\[
 0=z^{\mathsf T}(M-2I_c)z
    =\sum_i(d_i-2)z_i^2+\left(\sum_i z_i\right)^2.
\]
Every summand is nonnegative. Nonzero coordinates of $z$ can occur
only at indices with $d_i=2$, and their sum must be zero.
There are therefore at least two such indices.
They already contribute $1/2+1/2=1$ to $\sigma$, so there can be no
further index. Hence $c=2$ and $d_1=d_2=2$.
Conversely, $M=2I_2+J_2$ has $\theta=2$ and $\sigma=1$,
which gives equality.
\end{proof}

In Case~2, Proposition~\ref{prop:scalar-equality} forces
$c=2$ and $r_1=r_2=1$. Since no independent vertex is adjacent to
both clique vertices, the two clique vertices have distinct private
neighbors. The graph is therefore $P_4$ with possible isolated
vertices. However,
\[
 \det(\lambda I_4-Q(P_4))
       =\lambda(\lambda-2)(\lambda^2-4\lambda+2),
\]
so $q_1(P_4)=2+\sqrt2<4=m+1$; equality in the scalar estimate
does not give equality in the spectral bound.

\begin{proposition}\label{prop:rank-equality}
Under the hypotheses of Lemma~\ref{lem:rankloss}, set
\[
 \mathcal K=\Ker X,\qquad d=\dim\mathcal K,\qquad
 \rho=(c-k)_+,\qquad F=M^{1/2}H-M^{-1/2},
\]
and let $\Pi$ be the orthogonal projection onto $\mathcal K$.
With $\phi(M)=M-2I_c+M^{-1}$, the exact identity is
\begin{equation}\label{eq:rank-equality-defect}
 \begin{split}
 L_H-\rho\phi(\theta)
   ={}&\|F(I_c-\Pi)\|_F^2\\
     &+\tr\!\left(\Pi[\phi(M)-\phi(\theta)I_c]\right)
       +(d-\rho)\phi(\theta).
 \end{split}
\end{equation}
All three terms on the right are nonnegative.
Thus equality in~\eqref{eq:rankloss} holds exactly when all three vanish.
If $\theta>1$, this is equivalent to the simultaneous conditions
\[
 d=\rho,\qquad
 \mathcal K\subseteq\Ker(M-\theta I_c),\qquad
 F\big|_{\mathcal K^\perp}=0.
\]
The last condition means $H=M^{-1}$ on $\mathcal K^\perp$;
$H$ is already the identity on $\mathcal K$.
If $\theta=1$, equality is simply $L_H=0$, or $H=M^{-1}$.
\end{proposition}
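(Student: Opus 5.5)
The plan is to redo the proof of Lemma~\ref{lem:rankloss} without discarding any terms, working with the orthogonal decomposition $\R^c=\mathcal K\oplus\mathcal K^\perp$. Since $\Pi+(I_c-\Pi)=I_c$ and the two pieces are orthogonal projections with $\Pi(I_c-\Pi)=0$, the Frobenius norm splits exactly:
\[
 L_H=\|F\|_F^2=\|F\Pi\|_F^2+\|F(I_c-\Pi)\|_F^2 .
\]
This follows from $\|F\|_F^2=\tr(F^{\mathsf T}F)=\tr(F^{\mathsf T}F\Pi)+\tr(F^{\mathsf T}F(I_c-\Pi))$ together with cyclicity, since each projection is idempotent.

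Next I will compute $\|F\Pi\|_F^2$ directly. On $\mathcal K$ we have $X=0$, so $H\Pi=\Pi$. Hence $F\Pi=(M^{1/2}-M^{-1/2})\Pi$, and
\[
 \|F\Pi\|_F^2=\tr\bigl(\Pi(M^{1/2}-M^{-1/2})^2\Pi\bigr)=\tr(\Pi\,\phi(M)).
\]
Writing $\tr(\Pi\phi(M))=\tr(\Pi[\phi(M)-\phi(\theta)I_c])+d\,\phi(\theta)$ and subtracting $\rho\phi(\theta)$ gives \eqref{eq:rank-equality-defect} exactly.

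Nonnegativity of the three terms comes from the earlier results. The first term is a squared norm. For the second, $\phi$ is nondecreasing on $[1,\infty)$ and $M\succeq I_c$, so $\phi(M)-\phi(\theta)I_c\succeq0$; its trace against the projection $\Pi$ is therefore nonnegative. For the third, $d\ge\rho$ was shown in Lemma~\ref{lem:rankloss}, and $\phi(\theta)\ge0$.

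Finally I will characterize equality. When $\theta>1$ we have $\phi(\theta)>0$, so the third term vanishes exactly when $d=\rho$. Because $\phi$ is strictly increasing on $[1,\infty)$, the matrix $\phi(M)-\phi(\theta)I_c$ is positive semidefinite with kernel exactly $\Ker(M-\theta I_c)$. For a positive semidefinite $S$, $\tr(\Pi S\Pi)=0$ iff $S^{1/2}\Pi=0$, i.e.\ iff $\mathcal K\subseteq\Ker S$. The first term vanishes iff $F$ kills $\mathcal K^\perp$. Since $M^{1/2}$ is invertible, that is iff $H=M^{-1}$ on $\mathcal K^\perp$.

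When $\theta=1$, we get $\phi(\theta)=0$ and the identity reduces to $L_H\ge0$, so equality means $L_H=0$, which by \eqref{eq:equality-zero-squares} is $H=M^{-1}$.

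The only delicate point is the exact Frobenius splitting and the trace-zero characterization. Both are elementary, so I expect no real obstacle.
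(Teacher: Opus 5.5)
Your proposal is correct and follows the paper's proof essentially step for step: the orthogonal splitting $\|F\|_F^2=\|F\Pi\|_F^2+\|F(I_c-\Pi)\|_F^2$, the identity $F\Pi=(M^{1/2}-M^{-1/2})\Pi$ giving $\tr(\Pi\phi(M))$, and the equality analysis via strict monotonicity of $\phi$ on $[1,\infty)$. The only cosmetic differences are that you phrase the vanishing of the middle term through $S^{1/2}\Pi=0$ rather than an orthonormal basis of $\mathcal K$, and that you spell out the step $F=M^{1/2}(H-M^{-1})$ via invertibility of $M^{1/2}$.
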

\begin{proof}
Orthogonal decomposition of the domain of $F$ gives
\[
 \|F\|_F^2=\|F\Pi\|_F^2+\|F(I_c-\Pi)\|_F^2.
\]
Since $H\Pi=\Pi$,
$F\Pi=(M^{1/2}-M^{-1/2})\Pi$.
It follows, using $\Pi^2=\Pi$ and cyclicity of trace, that
\[
 \|F\Pi\|_F^2=\tr(\Pi\phi(M)\Pi)=\tr(\Pi\phi(M)).
\]
Subtract $\rho\phi(\theta)$ and use $\tr\Pi=d$ to obtain
\eqref{eq:rank-equality-defect}.
The first term is a squared norm. The second is nonnegative because
$\phi(M)-\phi(\theta)I_c\succeq0$, by monotonicity of $\phi$ on
$[1,\infty)$. The last is nonnegative because
$d\ge\rho$ and $\phi(\theta)\ge0$.

If $\theta>1$, then $\phi(\theta)>0$, so the last term vanishes
exactly when $d=\rho$.
For an orthonormal basis $z_1,\ldots,z_d$ of $\mathcal K$, the second
term is the sum of the nonnegative numbers
$z_j^{\mathsf T}[\phi(M)-\phi(\theta)I_c]z_j$.
It vanishes exactly when every $z_j$ lies in the kernel of that
positive semidefinite matrix. Strict monotonicity of $\phi$ makes
this kernel $\Ker(M-\theta I_c)$.
The first term vanishes exactly when $F$ is zero on
$\mathcal K^\perp$, proving the stated equality conditions.
Finally, if $\theta=1$, the right side of the rank bound is zero,
so its equality condition is $L_H=0$, equivalent to $H=M^{-1}$.
\end{proof}

\section*{Declarations}

\noindent\textbf{Conflict of interest.} The authors declare that they have no known competing financial interests or
personal relationships that could have appeared to influence the work reported in this paper.

\noindent\textbf{Data availability.} Data sharing not applicable to this paper as no datasets were generated or analysed
during the current study.

\noindent\textbf{Use of generative AI tools.} The authors used GPT-6 Astra to assist in exploring proof strategies,
checking proofs, and improving the exposition. All mathematical results, arguments, and proofs were
independently reviewed and verified by the authors, who take full responsibility for the accuracy and
content of this work.

 \section*{Acknowledgment}

The work is partly supported by the National Natural Science Foundation of China (Nos. 12371354, W2521102), the Science and Technology Commission of Shanghai Municipality (No. 25LN3200600) and the Montenegrin-Chinese Science and Technology Cooperation Project (No. 4-3).


\end{document}